\let\oldmarginpar\marginpar
\renewcommand\marginpar[1]{\-\oldmarginpar[\raggedleft\footnotesize #1]%
	{\raggedright\footnotesize #1}}
\theoremstyle{plain}
\newtheorem{thm}{Theorem}[section]
\newtheorem{lemma}[thm]{Lemma}
\newtheorem{example}[thm]{Example}
\newtheorem{prop}[thm]{Proposition}
\newtheorem{cor}[thm]{Corollary}
\theoremstyle{definition}
\newtheorem{definition}[thm]{Definition}
\newtheorem{remark}[thm]{Remark}
\numberwithin{equation}{section}
\renewcommand{\S}{\mathbb{S}}
\renewcommand{\P}{\mathbb{P}}
\newcommand{\D}{\mathbb{D}}
\newcommand{\N}{\mathbb{N}}
\newcommand{\Z}{\mathbb{Z}}
\newcommand{\R}{\mathbb{R}}
\newcommand{\SA}{\mathcal{A}}
\newcommand{\A}{\mathcal{A}}
\newcommand{\La}{\Lambda}
\newcommand{\la}{\lambda}
\newcommand{\e}{\varepsilon}
\newcommand{\dd}{\partial}
\newcommand{\sse}{\subset}
\newcommand{\lr}{\rightarrow}
\newcommand{\Br}{\operatorname{Br}}
\newcommand{\Aut}{\operatorname{Aut}}
\newcommand{\GL}{\operatorname{GL}}
\newcounter{daggerfootnote}
\newcommand{\bR}{\mathbb{R}}
\newcommand{\cH}{\mathcal{H}}
\begin{document}
	
	\title{On Newton polytopes of Lagrangian augmentations}
	\subjclass[2010]{Primary: 53D10. Secondary: 53D15, 57R17.}
	
	\author{Orsola Capovilla-Searle}
	\address{University of California Davis, Dept. of Mathematics, Shields Avenue, Davis, CA 95616, USA}
	\email{ocapovillasearle@ucdavis.edu}

	\author{Roger Casals}
	\address{University of California Davis, Dept. of Mathematics, Shields Avenue, Davis, CA 95616, USA}
	\email{casals@math.ucdavis.edu}
	
\maketitle

\begin{abstract} 
This note explores the use of Newton polytopes in the study of Lagrangian fillings of Legendrian submanifolds. In particular, we show that Newton polytopes associated to augmented values of Reeb chords can distinguish infinitely many distinct Lagrangian fillings, both for Legendrian links and higher-dimensional Legendrian spheres. The computations we perform work in finite characteristic, which significantly simplifies arguments and also allows us to show that there exist Legendrian links with infinitely many non-orientable exact Lagrangian fillings. 
\end{abstract}
\setcounter{tocdepth}{1}
%\tableofcontents

\color{black}
%%%%%%%%%%%%%%%%%%%%%%%%%%%%%%%%%%%%%%%%%%%%%%%%%%%%%%%%%%%%%%%%%
%%%%%%%%%%%%%%%%%%%%%%%%%%%%%%%%%%%%%%%%%%%%%%%%%%%%%%%%%%%%%%%%%
%%%%%%%%%%%%%%%%%%%%%%%%%%%%%%%%%%%%%%%%%%%%%%%%%%%%%%%%%%%%%%%%%
\section{Introduction}\label{sec:intro}

%%%%%%%%%%%%%%%%%%%%%%%%%%%%%%%%%%%%%%%%%%%%%%%%
%%%%%%%%%%%%%%%%%%%%%%%%%%%%%%%%%%%%%%%%%%%%%%%%

The object of this note is to start exploring Newton polytopes associated to augmentations of the Legendrian dg-algebra. We show that integer point counts for the Newton polytopes of augmented Reeb chords are an effective invariant that can distinguish infinitely many fillings. Two advantages are that computations with Newton polytopes are significantly simpler and more robust than previously employed methods, such as monomial counts, and that they can distinguish infinitely many augmentations even in finite characteristic. We also present explicit computations and applications, including the first construction of a Legendrian sphere in all dimensions with infinitely many Lagrangian fillings, and the first construction of Legendrian links with infinitely many non-orientable Lagrangian fillings; in both cases fillings are distinguished by studying Newton polytopes associated to the corresponding augmentations and working in characteristic two.

%%%%%%%%%%%%%%%%%%%%%%%%%%%%%%%%%%%%%%%%%%%%%%%%
%%%%%%%%%%%%%%%%%%%%%%%%%%%%%%%%%%%%%%%%%%%%%%%%
\subsection{Scientific context} Exact Lagrangian fillings of Legendrian submanifolds have been a central object of study in contact and symplectic topology~\cite{ekholm_etnyre_sabloff_2009, sabloff_traynor_2013,hayden_sabloff_2014,ehk,Pan,Casals_gao}. Recently, ~\cite{Casals_gao} constructed Legendrian links in the contact Darboux 3-ball $(\R^3, \xi_{std})$ with infinitely many orientable exact Lagrangian fillings, distinct up to compactly supported Hamiltonian isotopy. Further examples of such Legendrians links in $(\R^3, \xi_{std})$ were found in~\cite{Gao_shen_weng_1,CGGLSS, Casals_weng,Casals_zaslow} using methods from cluster algebras and the  microlocal theory of sheaves.

The first, and currently only, Floer-theoretic method to distinguish such infinite families was introduced in \cite{CasalsNg} by L.~Ng and the second author. The techniques developed in \cite{CasalsNg}, especially the effective use of the $E$-invariant in \cite[Section 6]{CasalsNg}, require the Legendrian dg-algebra and the augmentations induced by Lagrangian fillings to be both constructed and computable over $\Z$. This involves a fair amount of technical details and it is desirable to have a simpler, more directly computable, Floer-theoretic way to distinguish the infinite families of augmentations coming from $\vartheta$-loops. The current note presents Newton polytopes as a versatile and effective alternative.

In \cite{Pan}, Y.~Pan used the count of monomials of $\Z_2[H_1(L;\Z)]$-augmented values of Reeb chords to distinguish a Catalan number of Lagrangian fillings $L\sse(\D^4,\la_{st})$ for the max-tb $(2,n)$ torus links $\La\sse (\R^3, \xi_{std})$. The same counts of such monomials for the Lagrangians associated to $\vartheta$-loops are more challenging and, even when computable, typically not able to show that the $\vartheta$-orbits are entire. Newton polytopes associated to multivariable Laurent polynomials are often more robust invariants than the precise count of monomials and contain a significant amount of geometric information, see e.g.~ \cite{Newton3,Newton5,Newton6,Newton4,Newton1,Newton2}. Through a variety of explicit computations and applications, this note will illustrate that Newton polytopes are useful in the study of the Legendrian contact dg-algebra.

%%%%%%%%%%%%%%%%%%%%%%%%%%%%%%%%%%%%%%%%%%%%%%%%
%%%%%%%%%%%%%%%%%%%%%%%%%%%%%%%%%%%%%%%%%%%%%%%%
\subsection{Main results}

Let $(\mathcal{A}(\Lambda;R), \partial_{\Lambda})$ denote the Legendrian contact dg-algebra of a Legendrian $\Lambda \subset  (\R^{2n+1}, \xi_{std})$ over a commutative unital ground ring $R$, see \cite{Chekanov,ekholm_etnyre_sullivan_2005,CasalsNg}. An exact Lagrangian filling $L$ of $\Lambda$ induces a dg-algebra map $\e_L:(\mathcal{A}(\Lambda; R), \partial_{\Lambda})\rightarrow (R, 0)$, referred to as the augmentation $\e_L$ associated to $L$. Two Hamiltonian isotopic exact Lagrangian fillings induce dg-algebra homotopic augmentations, cf.~ \cite{ehk,karlsson}. See Section \ref{sec:prelim} for more details.

First, consider a Legendrian link $\Lambda \subset  (\R^3, \xi_{std})$ and the $\vartheta$-loops introduced by L.~Ng and the second author in \cite{CasalsNg}. Denote by $\mathcal{A}(\vartheta)\in \mbox{Aut}(\mathcal{A}(\Lambda;R), \partial_{\Lambda})$ the dg-algebra automorphism induced by a Legendrian $\vartheta$-loop based at $\Lambda$. By definition, the $\vartheta$-orbit of an augmentation $\e_L$ induced by an exact Lagrangian filling $L$ of $\La$ is said to be entire if $\e_L \circ\mathcal{A}(\vartheta)^n\not\simeq \e_L \circ\mathcal{A}(\vartheta)^m$ for distinct $n,m\in \N$. In such a case, the Lagrangian fillings of $\La$ obtained by concatenating $L$ with a power of the Lagrangian trace of $\vartheta$ are pairwise not compactly supported Hamiltonian isotopic.

Let us focus on two of the smallest Legendrian links for which entire $\vartheta$-orbits are detected over $\Z$, as established in \cite{CasalsNg}. Let $\Lambda(\beta_{11})$, resp.~$\Lambda_1$, denote the Legendrian link whose $(-1)$-closure is the positive braid $(\sigma_2 \sigma_1 \sigma_3 \sigma_2)^4\sigma_1\sigma_3$, resp.~$(\sigma_2 \sigma_1 \sigma_1 \sigma_2)^3\sigma_1$. See Figure~\ref{fig:BraidIntroLoop} for a schematic of the Legendrian isotopy loop $\vartheta$ of $\Lambda(\beta_{11})$ that we consider. Our first result shows that, even over a ring of finite characteristic, integral point counts of Newton polytopes associated to the augmentations in the $\vartheta$-orbit pairwise distinguish them:

\begin{thm}\label{thm:main} Let $\Lambda\subset  (\R^3, \xi_{std})$ be either $\Lambda(\beta_{11})$ or $\Lambda_1$. Then there exists a $\vartheta$-loop, a Reeb chord $\rho\in\mathcal{A}(\Lambda;\Z_2)$ and an orientable exact Lagrangian filling $L$ with augmentation
$$\e_{L}:(\mathcal{A}(\Lambda;\Z_2),\partial_{\Lambda})\rightarrow (\Z_2[H_1(L;\Z)],0),$$
such that for any $n\in\N$ the Newton polytopes in $\R^{b_1(L)}$ of the augmented values $(\e_L \circ\mathcal{A}(\vartheta)^n)(\rho)$ are pairwise non-equivalent, even up to a $\GL(b_1(L),\Z)$ transformation, where $b_1(L)=\mbox{rk}(H_1(L;\Z))$ denotes the first Betti number of $L$.

In particular, the $\vartheta$-orbit of the augmentation $$\e_{L}:(\mathcal{A}(\Lambda;\Z_2),\partial_{\Lambda})\rightarrow (\Z_2[H_1(L;\Z)],0),$$
is entire, even over the characteristic 2 ring $\Z_2[H_1(L;\Z)]$.
\end{thm}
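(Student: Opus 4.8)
The plan is to reduce the statement to one explicit finite computation with the Chekanov--Eliashberg dg-algebra over $\Z_2$ and then to extract from the resulting family of Laurent polynomials a $\GL(b_1(L),\Z)$-invariant of their Newton polytopes that is a strictly monotone function of $n$; monotonicity immediately yields pairwise non-equivalence, hence — by the homotopy invariance of augmented values recalled in Section~\ref{sec:prelim} — that the $\vartheta$-orbit of $\e_L$ is entire over $\Z_2[H_1(L;\Z)]$.

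Concretely, I would first fix a combinatorial model for $(\mathcal{A}(\Lambda;\Z_2),\partial_\Lambda)$: for $\Lambda=\Lambda(\beta_{11})$ and $\Lambda=\Lambda_1$, take the Chekanov--Eliashberg dg-algebra of the $(-1)$-closure of the stated positive braid, with generators the Reeb chords visible in the front and differential counting the relevant rigid disks, all computed over $\Z_2$ so that no signs or capping-path bookkeeping are needed. Next I would present the $\vartheta$-loop as an explicit concatenation of Legendrian Reidemeister moves and compute the induced automorphism $\mathcal{A}(\vartheta)$ as the corresponding composition of stable tame isomorphisms; the important point is to isolate a small set of generators whose images under $\e_L\circ\mathcal{A}(\vartheta)^n$ satisfy an explicit recursion in $n$. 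In parallel I would build an orientable exact Lagrangian filling $L$ from a pinching sequence of the positive braid (a decomposable filling of a positive braid closure is orientable), compute $b_1(L)$, and read off the augmentation $\e_L\colon(\mathcal{A}(\Lambda;\Z_2),\partial_\Lambda)\to(\Z_2[H_1(L;\Z)],0)$, with $\Z_2[H_1(L;\Z)]\cong\Z_2[t_1^{\pm1},\dots,t_{b_1(L)}^{\pm1}]$, directly from that sequence.

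With these in hand I would choose the Reeb chord $\rho$ so that $p_n:=(\e_L\circ\mathcal{A}(\vartheta)^n)(\rho)\in\Z_2[t_1^{\pm1},\dots,t_{b_1(L)}^{\pm1}]$ obeys a clean recursion, solve it in closed form, and identify its support $\mathrm{supp}(p_n)\subset\Z^{b_1(L)}$ together with its Newton polytope $P_n:=\mathrm{conv}(\mathrm{supp}(p_n))\subset\R^{b_1(L)}$. The heart of the argument is to pin down finitely many extreme monomials of $p_n$ whose $\Z_2$-coefficients are genuinely nonzero for every $n$ and whose exponent vectors are affine-linear in $n$ along linearly independent directions; these vertices span a simplex contained in $P_n$ whose number of lattice points is a polynomial in $n$ of positive degree. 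Since $|P_n\cap\Z^{b_1(L)}|$ — like the normalized volume or the Ehrhart polynomial of $P_n$ — is invariant under $\GL(b_1(L),\Z)$ and under translation (the latter absorbing the ambiguity of multiplying $p_n$ by a monomial), and is strictly increasing in $n$ by the previous step, the polytopes $P_n$ are pairwise non-equivalent even up to $\GL(b_1(L),\Z)$; as a dg-homotopy between augmentations valued in $\Z_2[H_1(L;\Z)]$ changes augmented values only through a monomial substitution acting on exponents by an element of $\GL(b_1(L),\Z)$ and a translation, the augmentations $\e_L\circ\mathcal{A}(\vartheta)^n$ are pairwise non-homotopic.

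The principal obstacle is to control $P_n$ uniformly in $n$: it does not suffice to observe that the number of monomials of $p_n$ grows, because over $\Z_2$ monomials cancel in pairs and the convex hull of the support can fluctuate; one must exhibit concrete corner monomials surviving all cancellations for every $n$ and track the motion of their exponent vectors. This forces one to solve the recursion for $\e_L\circ\mathcal{A}(\vartheta)^n$ on the chosen generators with enough precision — precisely where working in characteristic $2$ helps, as many terms that occur over $\Z$ vanish and the recursion becomes tractable. Proving that the surviving corner exponents are affine in $n$ and move in enough independent directions to make a $\GL(b_1(L),\Z)$-invariant strictly grow is the crux; the rest is bookkeeping with the Reidemeister-move formulas and the pinching sequence.
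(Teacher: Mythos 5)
Your plan follows the same route as the paper's proof: build $L$ from a pinching sequence capped by minimum cobordisms, read off $\e_L$ over $\Z_2[H_1(L;\Z)]$, express $\e_L\circ\mathcal{A}(\vartheta)^n$ on a pair of degree-zero chords as the $n$-th power of an explicit $2\times 2$ matrix with Laurent-polynomial entries (the paper quotes the monodromy matrix from \cite{CasalsNg} rather than recomputing the Reidemeister moves), and separate the resulting Newton polytopes by their lattice-point count, which is invariant under $\GL(b_1(L),\Z)$ and translation (Lemma~\ref{lem:intpoints}); since the chosen chord is a cycle, Lemma~\ref{lem:cycle} converts this into pairwise non-homotopic augmentations and an entire $\vartheta$-orbit. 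So there is no divergence of method.

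The gap sits exactly at the step you yourself call the crux, and it is twofold. First, you never actually produce the ``corner monomials surviving all cancellations'': that is the content of the paper's Lemma~\ref{lem:convexhull}, combined with the conjugation trick of Lemma~\ref{lem:thirdvariable} which reduces both links to the single normal form $\left(\begin{smallmatrix}\tilde x+\tilde y&1\\ \tilde x&1\end{smallmatrix}\right)$ up to a monomial factor; the survival of $\tilde x$, $\tilde x^n$, $\tilde y^n$ is proved there by specializing $\tilde x=0$ and $\tilde y=0$ and using the degree bound $\leq n$. Second, even granting such corners, your concluding inference is not valid as stated: affine-in-$n$ vertices that survive only give a simplex \emph{contained} in $P_n$, hence a lower bound on $|P_n\cap\Z^{b_1(L)}|$; a growing lower bound does not make the actual count strictly increasing, and pairwise non-equivalence for all $n\neq m$ requires an invariant that is injective in $n$. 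One must therefore determine $P_n$ exactly: in the paper, $N(\alpha^n)$ is shown to be precisely $\mathrm{Conv}\{(1,0),(n,0),(0,n)\}$, because every surviving monomial has total degree at most $n$ and no pure power of $\tilde y$ other than $\tilde y^n$ occurs, after which Pick's theorem gives a lattice-point count growing quadratically, hence injectively, in $n$ (Corollaries~\ref{cor:alpha} and~\ref{cor:beta}). Supplying this exact determination of the support, i.e.\ an upper bound matching your lower bound, is what your proposal still owes.
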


\begin{center}
	\begin{figure}[ht]
		\centering
		\includegraphics[scale=1]{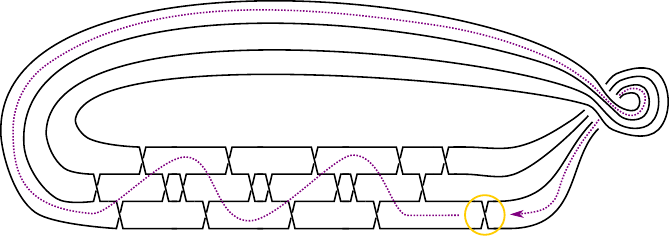}
		\caption{The Lagrangian projection of the Legendrian link $\Lambda(\beta_{11})$ associated to the 4-stranded braid $(\sigma_2\sigma_1\sigma_3\sigma_2)^4\sigma_3\sigma_1$. The $\vartheta$-loop for this Legendrian link is obtained by moving the rightmost $\sigma_1$ crossing around the braid, as indicated by the purple dashed arrow.}
		\label{fig:BraidIntroLoop}
	\end{figure}
\end{center}

First, the Newton polytopes in Theorem \ref{thm:main} are distinguished by their integral point counts, which are invariant under linear integral transformations. These Newton polytope invariants are simpler to compute than the $E$-invariant used in \cite{CasalsNg} and they have the additional advantage of working over finite characteristic. Second, in these instances, monomial counts of the Laurent polynomials $(\e_L \circ\mathcal{A}(\vartheta)^n)(\rho)$ do not pairwise distinguish them; see Subsection \ref{sec:proofmain}. In a sense, the Newton polytope of $(\e_L \circ\mathcal{A}(\vartheta)^n)(\rho)$ seems to be a more robust quantity than its monomial count.

It follows from Theorem \ref{thm:main} that $\Lambda(\beta_{11})$ and $\Lambda_1$ have infinitely many distinct exact Lagrangian fillings distinguished by augmentations over characteristic $2$. These links were first studied in \cite{CasalsNg} and are currently some of the smallest Legendrian links known to have infinitely many exact Lagrangian fillings in terms of genus and number of link components. Theorem \ref{thm:main} can then be used to prove that many more Legendrian links have infinitely many Lagrangian fillings that are also distinguished over finite characteristic:

\begin{thm}\label{thm:inf}
Let $\Lambda\subset  (\R^3, \xi_{std})$ be a Legendrian link with no Reeb chords of degree -1. Suppose that there exists an exact decomposable Lagrangian cobordism from either $\Lambda(\beta_{11})$ or $\Lambda_1$ to $\Lambda$. Then $\Lambda$ has infinitely many exact Lagrangian fillings distinguished by $\Z_2[H_1(L;\Z)]$-augmentations.
\end{thm}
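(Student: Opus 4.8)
The plan is to propagate the entire $\vartheta$-orbit detected in Theorem \ref{thm:main} along the exact decomposable Lagrangian cobordism $W$ from $\Lambda_0\in\{\Lambda(\beta_{11}),\Lambda_1\}$ to $\Lambda$. First I would recall that an exact Lagrangian cobordism $W:\Lambda_0\to\Lambda$ induces, via the functoriality of Legendrian contact homology \cite{ehk}, a dg-algebra map $\Phi_W:(\mathcal{A}(\Lambda;\Z_2),\partial_\Lambda)\to(\mathcal{A}(\Lambda_0;\Z_2),\partial_{\Lambda_0})$, and that for any exact Lagrangian filling $L_0$ of $\Lambda_0$ the concatenation $L:=W\circ L_0$ is an exact Lagrangian filling of $\Lambda$ with induced augmentation $\e_L=\e_{L_0}\circ\Phi_W$. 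Since $W$ is decomposable, $H_1(W;\Z)$ injects into $H_1(L;\Z)$ is not automatic, so I would instead work with the composite $\Z_2[H_1(L_0;\Z)]\hookrightarrow\Z_2[H_1(L;\Z)]$ coming from $L_0\hookrightarrow L$; the augmentation of $L$ takes values in $\Z_2[H_1(L;\Z)]$ and restricts, on the subalgebra generated by chords of $\Lambda_0$, to the augmentation $\e_{L_0}$ followed by this inclusion of group rings. The upshot is that the Newton polytopes of the $\vartheta$-orbit values over $L_0$ reappear, up to the inclusion $\R^{b_1(L_0)}\hookrightarrow\R^{b_1(L)}$ as a coordinate subspace, as Newton polytopes of augmented values over $L$.

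The key steps, in order, are: (1) concatenate the $\vartheta$-loop of $\Lambda_0$ with $W$ to produce a Legendrian isotopy loop — equivalently, since $W$ is a cobordism and not a loop, observe that the Lagrangian trace of $\vartheta$ concatenated onto $L$ produces the family $L_n:=W\circ(\vartheta^n\text{-trace})\circ L_0$ of exact Lagrangian fillings of $\Lambda$; (2) identify the augmentation $\e_{L_n}$, restricted to the Reeb chord $\rho\in\mathcal{A}(\Lambda_0;\Z_2)$ featured in Theorem \ref{thm:main} (which is a chord of $\Lambda$ after pushing forward through $\Phi_W$, using the hypothesis that $\Lambda$ has no Reeb chords of degree $-1$ so that $\Phi_W$ behaves well on the relevant degrees), with $\e_{L_n}(\Phi_W(\rho))=(\e_{L_0}\circ\mathcal{A}(\vartheta)^n)(\rho)$ up to the group-ring inclusion; (3) conclude that the Newton polytopes of $\e_{L_n}(\Phi_W(\rho))$ in $\R^{b_1(L_n)}$ are pairwise non-equivalent even up to $\GL(b_1(L_n),\Z)$, because their integral point counts already differ on the coordinate subspace $\R^{b_1(L_0)}$ by Theorem \ref{thm:main}, and these counts are $\GL(\cdot,\Z)$-invariant; (4) deduce that $\{L_n\}_{n\in\N}$ are pairwise not compactly supported Hamiltonian isotopic, hence $\Lambda$ has infinitely many exact Lagrangian fillings distinguished by $\Z_2[H_1(L;\Z)]$-augmentations.

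The main obstacle I expect is step (2): controlling precisely how the cobordism map $\Phi_W$ acts on the distinguished chord $\rho$ and its augmented value, and ensuring that no cancellation or mixing with other chords of $\Lambda$ destroys the Newton polytope data. The degree hypothesis — that $\Lambda$ has no Reeb chords of degree $-1$ — is exactly what is needed here: it guarantees that the augmentation variety behaves well and, combined with decomposability of $W$ (so that $\Phi_W$ is a composite of elementary maps coming from pinch moves and Legendrian isotopies, each of which is explicitly computable and injective on the relevant subalgebra), it prevents the augmented value $\e_{L_n}(\Phi_W(\rho))$ from degenerating. I would also need a short lemma verifying that attaching $W$ changes the homology only by adding new $H_1$-classes, so that $H_1(L_0;\Z)$ sits as a direct summand (or at least a saturated subgroup) of $H_1(L_n;\Z)$; this is where the explicit, Morse-theoretic structure of a decomposable cobordism is used, and it makes the coordinate-subspace picture in step (3) rigorous. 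The remainder — invariance of integral point counts under $\GL(\cdot,\Z)$ and the standard translation from non-homotopic augmentations to non-isotopic fillings — is routine given Theorem \ref{thm:main} and \cite{ehk}.
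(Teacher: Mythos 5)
Your overall strategy (compose the infinite family of fillings of $\Lambda_0\in\{\Lambda(\beta_{11}),\Lambda_1\}$ coming from Theorem~\ref{thm:main} with the cobordism $W$, and distinguish the resulting fillings of $\Lambda$ by $\GL(\cdot,\Z)$-invariant lattice-point counts of Newton polytopes) is the same as the paper's, which proves the theorem via Proposition~\ref{cor:cobordism_decomp}. However, your resolution of the step you yourself flag as the main obstacle, steps (2)--(3), has a genuine gap, in two places. First, functoriality runs against you: $\Phi_W$ maps $\mathcal{A}(\Lambda;\Z_2)\to\mathcal{A}(\Lambda_0;\Z_2)$, so the chord $\rho$ of $\Lambda_0$ from Theorem~\ref{thm:main} is not a chord of $\Lambda$ and cannot be ``pushed forward through $\Phi_W$''; the expression $\varepsilon_{L_n}(\Phi_W(\rho))$ is not defined, and there is no ``subalgebra of $\mathcal{A}(\Lambda)$ generated by chords of $\Lambda_0$'' on which $\varepsilon_{L_n}$ restricts to $\varepsilon_{L_0}\circ\mathcal{A}(\vartheta)^n$. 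What is needed is a preimage: an element $\rho_+\in\mathcal{A}(\Lambda)$, of degree zero and hence a cycle by the no-degree-$(-1)$ hypothesis, with $\Phi_W(\rho_+)=\rho$ after specializing the new cobordism variables to $1$. The paper obtains this from the explicit pinch-move formula $\Phi_L(c)=s$, $\Phi_L(a)=a+w(a)s^{-1}$, which respects the height filtration and therefore gives surjectivity of the specialized map; the degree hypothesis is not what provides this, it is what makes $\rho_+$ a cycle so that Lemma~\ref{lem:cycle} can convert inequivalent augmented values into non-Hamiltonian-isotopic fillings.

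Second, and more seriously, your step (3) assumes that the augmented value of the relevant element of $\mathcal{A}(\Lambda)$ is just $(\varepsilon_{L_0}\circ\mathcal{A}(\vartheta)^n)(\rho)$ viewed inside the larger group ring, so that the old Newton polytope reappears verbatim in a coordinate subspace. This is false in general: since $\Phi_W(\rho_+)=\rho+w(\rho_+)s^{-1}+\cdots$, the augmented value acquires extra monomials involving the new variables, and its Newton polytope is the convex hull of the old polytope (lying in the hyperplane where the new exponents vanish) together with additional lattice points with $s$-exponent $-1$. Moreover, knowing that a slice or projection of a family of polytopes is pairwise non-equivalent does not by itself imply the ambient polytopes are pairwise non-equivalent after passing to a subsequence; an earlier projection lemma of exactly this kind was found by the authors to fail, which is why Proposition~\ref{cor:cobordism_decomp} carries the stronger hypothesis that the witnessing polytopes are distinguished by \emph{lattice-point counts}. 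The paper's fix is that the old polytope $N(\varepsilon_{L_0^i},\rho)$ sits as a facet of the new polytope, so the total lattice-point count is at least the facet count; since the family from Theorem~\ref{thm:main} has strictly increasing counts, one extracts an infinite subfamily of fillings of $\Lambda$ with strictly increasing, hence pairwise distinct, counts. Without this facet-plus-monotonicity argument (or some substitute controlling the new monomials created by $\Phi_W$), your steps (2)--(3) do not go through, even though the remaining ingredients of your plan are correct.
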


A more general, if technical, version of Theorem~\ref{thm:inf} can be found in Section~\ref{sec:cobordisms}. In a nutshell, Newton polytopes are used in Theorems \ref{thm:main} and \ref{thm:inf} in the following manner. Consider an integral lattice $\Z^k\sse\R^k$, whose points we interpret as monomials in the variables $s_1^{\pm1}, \ldots, s_k^{\pm1}$, i.e.~the lattice is the character lattice of the torus associated to the Laurent polynomial ring in the variables $s_1^{\pm1}, \ldots, s_k^{\pm1}$. Consider a Laurent polynomial $f= \sum_{\alpha\in \Z^k} c_{\alpha}s^{\alpha}\in \Z_2[s_1^{\pm1}, \ldots, s_k^{\pm1}]$ in this ring; note that, by definition, $c_\alpha\neq0$ only for finitely many indices $\alpha\in\Z^k$. By definition, its Newton polytope $N(f)$ is the lattice polytope in $\R^k$ given by the convex hull of the exponent vectors in $f$, i.e.
$$N(f):=\mbox{Conv}\{ \alpha~|~c_{\alpha}\neq 0\}\subset \R^k.$$
For any two embedded exact Lagrangian fillings $L_1$ and $L_2$ of a Legendrian link $\Lambda \subset (\R^3, \xi_{std})$ with equal first betti number $k$, and for any Reeb chord $a$ of $\Lambda$, the induced augmentations $\e_{L_1}(a)$ and $\e_{L_2}(a)$ are Laurent polynomials in  $\Z_2[H_1(L;\Z)]\simeq \Z_2[s_1^{\pm1}, \ldots, s_k^{\pm1}]$. The group of unital $\Z_2[s_1^{\pm1}, \ldots, s_k^{\pm1}]$-automorphisms is isomorphic to $\GL(k,\Z)$. Therefore, it suffices to find $\GL(k;\Z)$ invariants of Laurent polynomials to distinguish $\Z_2[H_1(L;\Z)]$-valued augmentations $\e_L$ and thus their corresponding exact Lagrangian fillings. The $\GL(k;\Z)$-invariant that we use is the number of lattice points of the associated Newton polytopes.
%Pan ~\cite{Pan} first distinguished exact Lagrangian fillings $L$ of maximal-tb Legendrian torus knots with augmentations over characteristic 2 by using a different $\GL(k;\Z)$ invariant of Laurent polynomials: the number of monomials. For the Legendrian links, Legendrian isotopies and fillings that we consider in Theorem~\ref{thm:main}, the count of monomials fails to distinguish augmentations that can be distinguished by the number of lattice points of the associated Newton polytopes.

Computing augmentations with $\Z_2[H_1(L,;\Z)]$ or $\Z_2[H_1(L,\Lambda;\Z)]$ coefficients, as opposed to computing them with $\Z[H_1(L, \Lambda;\Z)]$ or $\Z[H_1(L, \Lambda;\Z)]$, is simpler. It also has the advantage that we can then distinguish non-orientable exact Lagrangian fillings, which have been less studied than their orientable counterparts. See~\cite{Josh} for some results on the existence of non-orientable exact Lagrangian fillings. Section~\ref{sec:nonorientable} uses lattice invariants of Newton polytopes to distinguish sets of non-orientable exact Lagrangian fillings. In particular, we prove the following result:

\begin{thm}\label{thm:nonorientable} There exist Legendrian knots $\Lambda\subset(\R^3,\xi_{std})$ with infinitely many distinct non-orientable exact Lagrangian fillings. 
\end{thm}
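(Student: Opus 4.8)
The plan is to deduce Theorem~\ref{thm:nonorientable} from Theorem~\ref{thm:inf} together with the flexibility that working in characteristic $2$ affords: non-orientable Lagrangian cobordisms are obstructed over $\Z$ but perfectly admissible as $\Z_2$-coefficient dg-algebra morphisms. First I would fix one of the Legendrian links $\Lambda(\beta_{11})$ or $\Lambda_1$ from Theorem~\ref{thm:main}, which carries an orientable filling $L$ whose $\vartheta$-orbit is entire over $\Z_2[H_1(L;\Z)]$, detected by lattice-point counts of Newton polytopes. The key observation is that the $\vartheta$-loop traces, when concatenated with $L$, are the \emph{embedded exact} Lagrangian fillings producing the infinite family, and that Newton polytopes of augmented Reeb chords are well-defined invariants regardless of orientability, since the relevant coefficient ring $\Z_2[H_1(L;\Z)]$ and its automorphism group $\GL(b_1(L),\Z)$ make no reference to an orientation.

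The main step is then to exhibit an exact \emph{decomposable Lagrangian cobordism} that is \emph{non-orientable} starting from $\Lambda(\beta_{11})$ (or $\Lambda_1$) and ending at some Legendrian knot $\Lambda'$ with no Reeb chords of degree $-1$; the simplest candidate is a single non-orientable handle attachment, realized by a Legendrian ambient surgery / pinch move that changes the Euler characteristic by an odd contribution, i.e.\ attaching a cross-cap rather than a $1$-handle. Concretely I would take the minimal-dimensional elementary non-orientable cobordism (a Möbius-type saddle, or equivalently a sequence of pinch moves realizing a non-orientable genus-$1$ topology) and check that composing it with each member of the entire $\vartheta$-family yields an infinite sequence of non-orientable exact Lagrangian fillings of $\Lambda'$. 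Functoriality of augmentations under exact Lagrangian cobordisms (the cobordism induces a dg-algebra map $\mathcal{A}(\Lambda';\Z_2)\to\mathcal{A}(\Lambda(\beta_{11});\Z_2)$ inducing a well-defined map on augmentations, hence on Newton polytopes after pulling back the Reeb chord $\rho$), combined with Theorem~\ref{thm:main}'s statement that the Newton polytopes are pairwise non-equivalent even up to $\GL(b_1,\Z)$, forces the pulled-back augmented values on $\Lambda'$ to remain pairwise distinct, so the fillings are pairwise non-Hamiltonian-isotopic. Since at least one filling in the family (built from the non-orientable cobordism composed with the orientable $L$) is non-orientable, and concatenation with further $\vartheta$-traces preserves non-orientability, all infinitely many are non-orientable.

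The hard part, and the place where care is genuinely needed, is constructing the non-orientable decomposable cobordism so that (i) it is genuinely exact and embedded, (ii) the target $\Lambda'$ has no degree $-1$ Reeb chords so Theorem~\ref{thm:inf} applies verbatim, and (iii) the induced map on augmented Reeb chords does not collapse the Newton polytope data — one must verify that the chord $\rho$ from Theorem~\ref{thm:main}, or a suitable chord on $\Lambda'$, pulls back to something whose Newton polytope still records the lattice-point invariant, rather than degenerating to a monomial. I expect condition (iii) to be the subtle obstruction: the cobordism map on the dg-algebra could in principle send $\rho$ to a unit or to a polynomial whose Newton polytope is a $\GL(b_1,\Z)$-translate of another in the family. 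To handle this, I would either choose $\Lambda'$ so that the relevant chord is in the image of a section of the cobordism map (e.g.\ the cobordism is a composition of elementary moves none of which touches the part of $\Lambda$ supporting $\rho$), or directly cite the computation in Section~\ref{sec:cobordisms} underlying the proof of Theorem~\ref{thm:inf}, which already establishes precisely this non-degeneracy for the class of cobordisms considered there; then only the non-orientability of one explicit cobordism remains to be checked, which is a matter of computing the Euler characteristic of the surface swept out by the pinch/handle moves and observing it is odd. With that in hand, the infinite family of non-orientable exact Lagrangian fillings of $\Lambda'$ follows, completing the proof.
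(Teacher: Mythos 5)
Your overall architecture matches the paper's: take the characteristic-$2$ infinite family from Theorem~\ref{thm:main}, push it across a single pinch-move cobordism to an explicit Legendrian knot, and use the lattice-point persistence argument of Proposition~\ref{cor:cobordism_decomp} to keep the Newton polytopes pairwise distinct. The structural difference is where the non-orientability enters. The paper reverses the orientation of one component of $\Lambda(\beta_{11})$ (resp.\ $\Lambda_1$), so that the original pinching sequence already produces infinitely many non-orientable fillings of the link $\tilde{\Lambda}(\beta_{11})$ with unchanged characteristic-$2$ augmentations and monodromy (Example~\ref{ex:nonorientable}), and then passes to the explicit knot $\Lambda'$ of Figure~\ref{fig:knotnonorientable} by one further pinch, checking $\Phi_L(a_9)=a_9$ with $a_9$ a cycle; you instead keep the orientable fillings of $\Lambda(\beta_{11})$ and want the cobordism up to the knot to carry the non-orientability. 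That rearrangement is reasonable in principle, and your plan for your condition (iii) --- invoking the mechanism behind Theorem~\ref{thm:inf}, namely $\Phi_L(a)=a+w(a)s^{-1}$ together with monotone lattice-point counts --- is exactly the paper's mechanism.

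The genuine gap is that the central object is never produced: Theorem~\ref{thm:nonorientable} is an existence statement, and your argument defers precisely the existence of the knot $\Lambda'$, of the decomposable cobordism to it, and the verification that the relevant chord is a cycle whose Newton data survives (``only the non-orientability of one explicit cobordism remains to be checked'', but no cobordism is exhibited). Two of the heuristics you propose for that construction are also off. First, non-orientability is not detected by an ``odd contribution'' to the Euler characteristic: both orientable and non-orientable pinch moves change $\chi$ by $-1$; what distinguishes them is the sign of the crossing and the orientation-incompatibility of the attached band, tracked by the basepoint conventions of Section~\ref{sec:nonorientable}. Second, an elementary non-orientable move modeled on a twice-punctured $\R\P^2$ does not change the number of link components, while $\Lambda(\beta_{11})$ and $\Lambda_1$ are links with more than one component; so to land on a knot your single pinch must be a component-merging saddle, whose elementary piece is itself an orientable pair of pants, and non-orientability of the concatenated fillings must then be argued globally, by showing the band is attached incompatibly with the boundary orientation induced by every filling $L^n$ in the family. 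That global bookkeeping is exactly what the paper sets up in Section~\ref{sec:nonorientable}, and supplying it --- together with an explicit knot such as the one in Figure~\ref{fig:knotnonorientable} and the computation $\Phi_L(a_9)=a_9$ --- is the remaining, essential content of the proof.
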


In higher dimensions, front spinning previous examples of the second author, in \cite{Casals_gao} or \cite{CasalsNg}, produces higher-dimensional Legendrian submanifolds $\La\sse(\R^{2n+1},\xi_{st})$ with infinitely many Lagrangian fillings, see~\cite[Theorem 1.1]{Golovko}. Nevertheless, these Legendrians are never spheres. Two disadvantages of this are that, first, they are not topologically as simple as possible and, second, it is not possible to attach a Weinstein handle (resp.~ perform contact surgery) along them to produce interesting symplectic manifolds (resp.~ contact manifolds). It is therefore desirable to construct Legendrian spheres $\La\sse(\R^{2n+1},\xi_{st})$ in all dimensions with infinitely many Lagrangian fillings. We achieve this in Section \ref{sec:highdim}, also using Newton polytopes, where we prove the following result:

\begin{thm}\label{thm:highdim}
For any $n\geq 2$, there exist Legendrian spheres $\Lambda\sse(\R^{2n+1}, \xi_{std})$
with infinitely many exact Lagrangian fillings, distinct up to compactly supported Hamiltonian isotopy. In addition, these fillings are distinguished by augmentations over rings of characteristic two.
\end{thm}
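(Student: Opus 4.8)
The plan is to bootstrap from the one-dimensional examples of Theorem~\ref{thm:main} via front spinning, exactly as in \cite{Golovko}, but to arrange the topology so that the spun Legendrian is a sphere rather than a product with $S^1$. Recall that front spinning takes a Legendrian $\Lambda\subset(\R^{2n+1},\xi_{std})$ and produces $\Sigma\Lambda\subset(\R^{2n+3},\xi_{std})$; if $\Lambda$ is a Legendrian \emph{arc} with Legendrian (in fact, flat at the ends) boundary on a standard $\R^{2n+1}\subset\R^{2n+3}$, the appropriate ``spinning an arc'' construction caps off the $S^1$-direction and yields a closed Legendrian sphere of one dimension higher. So the first step is to replace $\Lambda(\beta_{11})$ (or $\Lambda_1$) by a Legendrian \emph{tangle} $T$ — a Legendrian arc in the $1$-jet space of an interval — whose ``braid closure'' recovers the link in Theorem~\ref{thm:main}, and to run the $\vartheta$-loop construction at the level of the tangle. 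Iterating: starting in dimension $1$, applying the arc-spinning once gives a Legendrian $2$-sphere in $(\R^5,\xi_{std})$, and repeating the ordinary front spinning $n-2$ more times gives Legendrian $n$-spheres in $(\R^{2n+1},\xi_{std})$ for every $n\ge 2$.

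The second step is to track augmentations and $\vartheta$-loops through spinning. Front spinning is known to behave well with respect to the Legendrian contact dg-algebra: the DGA of $\Sigma\Lambda$ is built functorially from that of $\Lambda$ (after suitable grading shifts and, crucially for us, this works over $\Z_2$), an exact Lagrangian filling $L$ of $\Lambda$ spins to an exact Lagrangian filling $\Sigma L$ of $\Sigma\Lambda$ with $H_1(\Sigma L;\Z)\cong H_1(L;\Z)$ when we spin an arc to a disk (or with an extra $\Z$ summand for ordinary spinning, which does not hurt us — we will only vary the $\vartheta$-loop in the ``original'' variables), and the $\vartheta$-loop of $\Lambda$ spins to a Legendrian isotopy loop of $\Sigma\Lambda$ whose induced DGA automorphism is the spun automorphism $\Sigma\mathcal{A}(\vartheta)$. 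Consequently the augmented value $(\e_{\Sigma L}\circ(\Sigma\mathcal{A}(\vartheta))^n)(\Sigma\rho)$ of the spun Reeb chord equals $(\e_L\circ\mathcal{A}(\vartheta)^n)(\rho)$ as an element of $\Z_2[H_1(L;\Z)]$, possibly up to a unit; either way, its Newton polytope is the same up to translation, so the $\GL(b_1,\Z)$-invariant lattice-point count of Theorem~\ref{thm:main} is inherited verbatim. Thus the spun augmentations $\e_{\Sigma L}\circ(\Sigma\mathcal{A}(\vartheta))^n$ are pairwise non-homotopic, the $\vartheta$-orbit upstairs is entire, and the concatenated fillings $\Sigma L\#(\text{trace of }\Sigma\vartheta)^n$ of the Legendrian sphere are pairwise not compactly supported Hamiltonian isotopic — giving infinitely many fillings distinguished over characteristic two.

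The main obstacle, and the step requiring the most care, is the arc-spinning construction itself: ordinary front spinning of a \emph{closed} Legendrian produces something with a free $S^1$ factor in its topology (both for the Legendrian and for its fillings), so it can never yield a sphere, and this is exactly why the examples of \cite{Golovko} are not spheres. We must instead present our base Legendrian as the double of a Legendrian tangle with flat Legendrian endpoints and spin the tangle, so that the two ``end disks'' of the solid-torus-like spinning region are filled in and the result is a sphere; one then has to check that (i) the $\vartheta$-loop can be realized as a loop of \emph{tangles} fixing the endpoints, so that it spins to a genuine Legendrian isotopy loop of the sphere, and (ii) the Lagrangian filling $L$ — which we should take to be one of the fillings by pinching a positive braid word, hence itself ``half'' of a natural filling of the doubled link — spins to an exact Lagrangian \emph{disk}-bundle-type filling of the sphere with the required $H_1$. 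Verifying that the DGA, the filling-induced augmentation, and the loop automorphism all survive arc-spinning over $\Z_2$ with the exponent data unchanged is the technical heart; once that is in place, Theorem~\ref{thm:main} supplies the needed Newton-polytope computation with no further work. A secondary point to address is the hypothesis ``no Reeb chords of degree $-1$'': one should check that (after a small Legendrian perturbation) the spun spheres can be taken to satisfy the analogous condition in their dimension, so that the filling-induced augmentations are genuine DGA maps and Theorem~\ref{thm:inf}-style reasoning applies if one wishes to propagate to further examples.
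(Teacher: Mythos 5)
Your route is genuinely different from the paper's, but as written it has two concrete gaps. First, the inductive step fails: ordinary front spinning of a closed Legendrian always produces a Legendrian diffeomorphic to $S^1\times(\text{the previous one})$, and likewise for its exact fillings, so after your arc-spun $2$-sphere in $(\R^5,\xi_{std})$, ``repeating the ordinary front spinning $n-2$ more times'' yields Legendrians carrying an $S^1$ factor, never spheres --- the very obstruction you identify in the first step reappears at every later step. To stay among spheres you would have to iterate the arc-spinning itself, or spin an arc by $S^{n-1}$ in one go, which multiplies the burden of the unestablished machinery. A smaller but real point: $\Lambda(\beta_{11})$ and $\Lambda_1$ are two-component links, so neither is the double of a single Legendrian arc; one must first pass to a knot (as the paper does with $\tilde\Lambda$ of Example~\ref{ex:knot}) before any ``double of a tangle'' presentation could produce a sphere.

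Second, and more seriously, everything rests on an arc/tangle-spinning package --- a tangle DGA over $\Z_2$, functoriality of fillings-with-boundary-conditions under spinning, spinning of isotopy loops of tangles, and the asserted equality of augmented values with unchanged exponent data --- which is not in the literature you invoke and which you explicitly defer as ``the technical heart.'' Moreover, for the specific examples there are reasons to doubt your items (i) and (ii): the $\vartheta$-loop moves a crossing around the entire $(-1)$-closure, so it is not in any evident way a loop of tangles fixing endpoints, and the fillings $L^n$ (a pinching-sequence filling concatenated with $n$ copies of the $\vartheta$-trace) are nowhere exhibited as spins of tangle fillings, which is exactly what you need for them to induce fillings of the spun sphere. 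The paper's proof avoids all of this: it front-spins the closed knot $\tilde\Lambda$ by $S^{n-1}$, so that the aug-infinite property over $\Z_2[H_1]$ transfers by Golovko's argument, and then performs a single Legendrian ambient surgery along a Legendrian disk $D_S$ bounding the isotropic sphere $S^{n-1}\times\{p\}$ at a cusp, turning $S^1\times S^{n-1}$ into $S^n$; by \cite[Theorem 1.1]{dgr} the surgery cobordism induces a surjective DGA map, the only new Reeb chord has degree $0$, the augmented value of the distinguishing chord is unchanged, and the maps on $H_1$ induced by spinning and surgery are injective, so the lattice-point count from Theorem~\ref{thm:main}, run as in Proposition~\ref{cor:cobordism_decomp}, applies unchanged. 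To salvage your approach you would need either to develop the tangle-spinning formalism in full and verify (i)--(ii) for these particular loops and fillings, or to replace that step by such a surgery argument.
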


Following \cite[Corollary 1.10]{Casals_gao}, attaching Weinstein $n$-handles along these Legendrian spheres produces $(2n+2)$-dimensional Weinstein manifolds which are homotopic to $S^{n+1}$ and have infinitely many closed embedded exact Lagrangian submanifolds, distinct up to Hamiltonian isotopy in the Weinstein manifold but smoothly isotopic.
%The Betti numbers for these Lagrangian submanifolds are readily computable from the construction.

\begin{comment}

\begin{cor}\label{cor:Weinstein}
Let $g \in \N$ and $g\geq 2$. Then, there exists a Weinstein $2n$-manifold homotopic to the $2n$-sphere $\S^{2n}$ which admits infinitely many Hamiltonian isotopy classes of embedded exact Lagrangian submanifolds \textcolor{blue}{with betti numbers}.
\end{cor}

\textcolor{blue}{The Lagrangian submanifolds are topologically $(\S^{n-1}\times L)\cup \{n-\text{handle}\}$, where the $n$-handle kills off a $(n-1)$ cycle and $L$ is a punctured surface of either genus $g\geq 2$ or a punctured non-orientable surface with genus $g\geq 1$, and crosscap number $c\geq 2$.}

\textcolor{blue}{Such Lagrangians submanifolds have the following betti numbers. If $n\geq 2$, then 
$$b_i=
\begin{cases}
  b_i(L)  & 1\leq i\leq 2,\\
  b_{i-n}(L) & n+1\leq i\leq n+2,\\
  0& ~\text{otherwise}.
\end{cases}$$
If $n=1$,
$$b_i=
\begin{cases}
  b_i(L)  & 1\leq i\leq 1,\\
  b_2(L)+b_1(L) & i=2,\\
  b_3(L) & i=3,\\
  0& ~\text{otherwise}.
\end{cases}$$}

\end{comment}

Finally, a natural question is which lattice polytopes can occur as the Newton polytopes we consider. That is, which lattice polytopes are Newton polytopes of $\e_L(\rho)$, where $L$ is an embedded exact Lagrangian filling of a Legendrian $\Lambda$ and $\rho$ a Reeb chord in $\La$. A priori, it is not even clear if and when top-dimensional polytopes, i.e.~of dimension equal to $b_1$ of a Lagrangian filling, and which ones, appear as Newton polytopes of augmented Reeb values. As a starting step, we show in Proposition~\ref{prop:torusknots} that the max-tb Legendrian knots $\La(2,n)\sse(\R^3, \xi_{std})$, presented as the rainbow closure of $\sigma_1^n$, $n\in\N$ odd, are such that for any degree zero Reeb chord $\rho$ of $\La(2,n)$ there exist an augmentation $\e_L$ -- induced by a Lagrangian filling $L$ -- such that the Newton polytope $N(\e_L(\rho))$ is the top-dimensional standard $(n-1)$-simplex in $\bR^{n-1}$.
%, where $b_1(L)=n-1$ for any orientable exact Lagrangian filling of $\La(2,n)$.\\

\begin{comment}

{\bf{Outline:}}
We review the necessary background on the Legendrian dga in Section~\ref{sec:prelim}. In Section~\ref{sec:proofs} we introduce Newton polytopes, invariants of Newton polytopes in the context of augmentations of the Legendrian dga, and the proofs of Theorem~\ref{thm:main},~\ref{thm:inf}, and~\ref{thm:highdim}. Section~\ref{sec:rem} ends with remarks on the kinds of Newton polytopes that can arise as augmentations of Legendrian dgas, on distinguishing non-orientable exact Lagrangian fillings of Legendrians, and with the proof of Theorem~\ref{thm:nonorientable}. 

\end{comment}

%%%%%%%%%%%%%%%%%%%%%%%%%%%%%%%%%%%%%%%%%%%%%%%%%%%%%%%%%%%%%%%%%

% ------------------------------------------------------------------------------------------------------
{\bf Acknowledgements:} 
O.~Capovilla-Searle is supported by the NSF Postdoctoral Research Fellowship DMS-2103188. R.~Casals is supported by the NSF CAREER DMS-1942363 and a Sloan Research Fellowship of the Alfred P. Sloan Foundation. We thank James Hughes, Lenhard Ng, Shanon Rubin, Daping Weng, and Angela Wu for helpful conversations.
\hfill$\Box$\\

%%%%%%%%%%%%%%%%%%%%%%%%%%%%%%%%%%%%%%%%%%%%%%%%
%%%%%%%%%%%%%%%%%%%%%%%%%%%%%%%%%%%%%%%%%%%%%%%%
%%%%%%%%%%%%%%%%%%%%%%%%%%%%%%%%%%%%%%%%%%%%%%%%
%%%%%%%%%%%%%%%%%%%%%%%%%%%%%%%%%%%%%%%%%%%%%%%%

\section{Preliminaries on the Legendrian dg-algebra}\label{sec:prelim}

%%%%%%%%%%%%%%%%%%%%%%%%%%%%%%%%%%%%%%%%%%%%%%%%
%%%%%%%%%%%%%%%%%%%%%%%%%%%%%%%%%%%%%%%%%%%%%%%%

In this section we present the necessary preliminaries on the Legendrian contact dg-algebra, in line with \cite{Chekanov,ehk,CasalsNg}.

\subsection{Legendrian links and Lagrangian fillings}\label{ssec:LinksFillings}

Let $(\R^{2n+1}, \xi_{std})$ denote the standard contact Darboux ball, with $\xi_{std}=\ker\{dz-\sum_{i=1}^n y_i dx_i\}$. By definition, a Legendrian submanifold is a smooth $n$-dimensional submanifold such that $T_p\Lambda \subset \xi_p$ for all $p\in \Lambda$. See \cite{ArnoldSing} for more background on contact structures and their Legendrian submanifolds.

\begin{definition}\label{defn:cobord} 
	Let $\Lambda_{+},\Lambda_{-}\sse(\R^{2n+1}, \xi_{std})$ be Legendrian submanifolds.  
	An exact Lagrangian cobordism $L$ from $\Lambda_{-}$ to $\Lambda_{+}$ is an embedded Lagrangian $(n+1)$-dimensional submanifold in the symplectization $(\R_t\times \R^{2n+1}, d(e^t(dz-\sum_{i=1}^n y_i dx_i)))$
	   that  has  cylindrical ends  and is exact in the following sense:  
for some $N>0$, 
	\begin{enumerate}
		\item  $L \cap ([-N,N]\times Y)$ is compact,
		\item  $L \cap ((N,\infty)\times Y)=(N,\infty)\times \Lambda_{+}$ and $L\cap ((-\infty,-N)\times Y)=(-\infty,-N)\times \Lambda_{-}$, 
	\item there exists  a function $f: L \rightarrow \R$  and constants $\mathfrak c_\pm$ such that 
		$e^t\alpha|_{L} = df$, where $f|_{(-\infty, -N) \times \Lambda_{-}} = \mathfrak c_{-}$, and $f|_{(N, \infty) \times \Lambda_{+}} = \mathfrak c_{+}$.
  %We call $f$ the primitive on $L$.
	\end{enumerate}
	By definition, an exact Lagrangian filling  $L$ of a Legendrian link $\Lambda$ is an exact Lagrangian cobordism from the empty set $\emptyset$ to the Legendrian link $\Lambda$.\hfill$\Box$
\end{definition}

Exact Lagrangian fillings can be studied via Legendrian invariants that are functorial over exact Lagrangian cobordisms, such as the Legendrian differential graded algebra $(\mathcal{A}(\Lambda;R), \partial_{\Lambda})$, see~\cite{Chekanov} for $\Lambda \subset (\R^{3}, \xi_{std})$ and ~\cite{ekholm_etnyre_sullivan_2007} for generalizations; see~\cite{Etnyre_ng} for a survey. The underlying algebra $\mathcal{A}(\Lambda;R)$ is a unital graded algebra over a unital coefficient ring $R$ that is generated, freely commutatively, by the set of Reeb chords of $\Lambda$. A Reeb chord of $\Lambda$ is a trajectory of the Reeb vector field, in this case $\partial_z$, that begins and ends on $\Lambda$. The grading on $\A(\Lambda;R)$ is defined on the Reeb chord generators via a Conley-Zehnder index, and is defined on a word as the sum of the gradings of the letters in the word. The differential $\partial_{\Lambda}$ counts rigid $\mathcal{J}$-holomorphic disks in the symplectization with boundary on $\R \times \Lambda$. See~\cite{Chekanov,Etnyre_ng} for further details. 

For a Legendrian link $\Lambda\subset (\R^3, \xi_{std})$, the Legendrian dg-algebra  $(\mathcal{A}(\Lambda;R), \partial_{\Lambda})$ can be computed combinatorially from a generic Lagrangian projection of $\Lambda$, where the Lagrangian projection is given by $\Pi_{xy}(x,y,z)=(x,y)$. The other useful projection is the front projection, given by $\Pi_{xz}(x,y,z)=(x,z)$; the translation from a front  projection of a Legendrian link to a Lagrangian projection can performed via ~\cite{ng_2003}. 
The Reeb chords of $\Lambda$ correspond to crossings in the Lagrangian projection, and the rigid $\mathcal{J}$-holomorphic disks that the differential counts are immersed punctured disks $\Delta\subset \R^2$ with $\partial(\Delta)\subset \Pi_{xy}(\Lambda)$ where the punctures map to convex corners of the crossings; see \cite{Etnyre_ng}. 

%An important Legendrian invariant is the set of augmentations of $(\mathcal{A}(\Lambda;R), \partial_{\Lambda})$.

\begin{definition}
Let $\ell\in\N$ divide $2rot(\Lambda)$, where $rot(\Lambda)$ denotes the rotation number of $\Lambda$. An $\ell$-graded augmentation is a dg-algebra map $\e:(\A(\Lambda;R), \partial_{\Lambda})\rightarrow (R,0)$ such that $\e$ maps all elements of degree not divisible by $\ell$ to $0$. If $\ell=1$ then $\e$ is said to be ungraded, and if $\ell=0$ then $\e$ is said to be graded.\hfill$\Box$
\end{definition}

In this note we only use $\ell$-graded augmentations with $\ell=0,1$.

Exact Lagrangian cobordisms $L$ between Legendrians $\Lambda_{\pm}\subset (\R^{3}, \xi_{std})$ induce dga (dg-algebra) maps
$$\Phi_{L}: (\A(\Lambda_+;  \Z_2[H_1(L;\Z)]), \partial_{\Lambda_+})\rightarrow (\A(\Lambda_-; \Z_2[H_1(L;\Z)]), \partial_{\Lambda_-}).$$
If $L$ is an exact Lagrangian filling of a Legendrian $\Lambda\subset \R^{3}_{std}$ with Maslov number $\ell$, then $L$ induces an $\ell$-graded augmentation
$$\e_L:(\A(\Lambda;  \Z_2[H_1(L;\Z)]), \partial_{\Lambda})\rightarrow (\Z_2[H_1(L;\Z)], 0),$$
where $\Z_2[H_1(L;\Z)]$ is in grading $0$ modulo $\ell$ and it has trivial differential as a dg-algebra. This was first shown for $\Z_2$ coefficients in~\cite[Theorem 1.2]{ehk}, and later upgraded to $\Z_2[H_1(L;\Z)]$ coefficients in~\cite{cornwell_ng_sivek_2016,Pan}. Moreover, Karlsson observed in~\cite[Section 2.2]{karlsson} that this result can be extended to arbitrary exact Lagrangian cobordisms of Legendrians in the higher-dimensional Darboux balls. Adding in the hypothesis that $L$ is an oriented, spin exact Lagrangian, \cite{karlsson} extended the dga maps to $\Z[H_1(L;\Z)]$-coefficients. In line with \cite[Definition 3.9]{CasalsNg}, we can define:

\begin{definition}
A $k$-system of $\ell$-graded augmentations of a Legendrian link $\Lambda$ is a unital dg-algebra map
$$\e: (\A(\Lambda; R), \partial_{\Lambda}) \rightarrow (R[s_1^{\pm1}, \ldots, s_k^{\pm1}],0),$$
so that $\e \circ \partial_{\Lambda}=0$ and $\e(1)=1$, and such that $\e$ maps all elements of degree not divisible by $\ell$ to $0$. Two $k$-systems of $\ell$-graded augmentations 
$$ \e: \A(\Lambda;R) \rightarrow R[s_1^{\pm1}, \ldots, s_k^{\pm1}]~\text{and}~ \e ': \A(\Lambda;R) \rightarrow R[s_1'^{\pm1}, \ldots, s_k'^{\pm1}]$$
are equivalent if there exists an $R$-linear automorphism $$\phi: R[s_1^{\pm1}, \ldots, s_k^{\pm1}] \rightarrow R[s_1'^{\pm1}, \ldots, s_k'^{\pm1}]$$ such that $\e '=\phi \circ \e$.\hfill$\Box$
\end{definition}

We will need the group of automorphisms of the $R$-algebra of Laurent polynomials for $R$ a unital domain. The units of $R[\Z^k]$ are well-understood, e.g.~see \cite[Theorem 3.(b)]{Neher09}. Since any automorphism of $R[\Z^k]$ must send the generator of each $\Z$-summand to a unit, the automorphism group of $R[\Z^k]$ is as follows.

\begin{lemma}\label{lem:equivalence}
Let $R$ be a unital integral domain and $R^*$ its group of units. The group of unital $R$-linear automorphisms of $R[s_1^{\pm1}, \ldots, s_k^{\pm1}]$ is isomorphic to the semidirect product $(R^*)^k\rtimes_{\psi} \GL(k;\Z)$, where $$\psi:\GL(k;\Z)\rightarrow Aut((R^*)^k),\quad \psi_{A}(\alpha_1, \ldots, \alpha_k)=(\alpha_1^{a_{11}}, \ldots, \alpha_k^{a_{kk}}),\quad A=(a_{ij})\in \GL(k,\Z).$$
\end{lemma}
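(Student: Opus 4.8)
The plan is to realize the automorphism group as a split extension of $\GL(k;\Z)$ by the ``diagonal torus'' $(R^{*})^{k}$. First I would pass to group-algebra notation, writing $R[s_{1}^{\pm1},\ldots,s_{k}^{\pm1}]=R[\Z^{k}]$ with monomials $s^{\alpha}$, $\alpha\in\Z^{k}$, and invoke the cited description of units: since $R$ is an integral domain, every unit of $R[\Z^{k}]$ is a trivial unit $u\,s^{\alpha}$ with $u\in R^{*}$, so $R[\Z^{k}]^{*}\cong R^{*}\times\Z^{k}$ (this is \cite[Theorem 3.(b)]{Neher09}). This is the only genuinely nontrivial ingredient, and it is exactly where the hypothesis that $R$ is a domain is used; everything afterward is bookkeeping.

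Next I would note that a unital $R$-algebra endomorphism $\varphi$ is determined by the images $\varphi(s_{i})$, and that by the universal property of the Laurent ring any choice of units as these images extends to a well-defined $R$-algebra endomorphism of $R[\Z^{k}]$. If $\varphi$ is an automorphism, then $\varphi$ and $\varphi^{-1}$ preserve units and fix $R^{*}$ pointwise (by $R$-linearity), so $\varphi$ induces an automorphism of $R[\Z^{k}]^{*}/R^{*}\cong\Z^{k}$, i.e.\ a matrix $A_{\varphi}\in\GL(k;\Z)$, and we may write $\varphi(s_{i})=u_{i}\,s^{A_{\varphi}e_{i}}$ with $u_{i}\in R^{*}$. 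The assignment $\varphi\mapsto A_{\varphi}$ is a group homomorphism $\Aut_{R}(R[\Z^{k}])\to\GL(k;\Z)$; its kernel consists of the diagonal automorphisms $D_{u}\colon s_{i}\mapsto u_{i}s_{i}$ (each invertible, with inverse $D_{u^{-1}}$), which form a subgroup isomorphic to $(R^{*})^{k}$.

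To finish I would exhibit a splitting and identify the action. For the splitting, send $A\in\GL(k;\Z)$ to the monomial automorphism $M_{A}\colon s^{\beta}\mapsto s^{A\beta}$; it is well-defined and invertible with inverse $M_{A^{-1}}$ precisely because $A$ is invertible over $\Z$, and $M_{A}M_{B}=M_{AB}$, so this is a group-theoretic section. Since every $\varphi$ factors uniquely as $D_{u}\circ M_{A_{\varphi}}$, the sequence $1\to(R^{*})^{k}\to\Aut_{R}(R[\Z^{k}])\to\GL(k;\Z)\to1$ splits and $\Aut_{R}(R[\Z^{k}])\cong(R^{*})^{k}\rtimes\GL(k;\Z)$. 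The semidirect-product action is then the conjugation action of the section on the kernel: evaluating $M_{A}\circ D_{u}\circ M_{A}^{-1}$ on a monomial $s^{\beta}$ shows it is again a diagonal automorphism whose unit components are obtained from the $u_{j}$ by reindexing the exponents through $A$, which is the map $\psi_{A}$ in the statement. I do not anticipate a real obstacle: the only care required is consistency of the row/column convention so that $A\mapsto M_{A}$ is a homomorphism rather than an anti-homomorphism and $\psi$ comes out in the stated form, and invoking the domain hypothesis at the one place it is needed.
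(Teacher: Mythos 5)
Your proposal is correct and follows essentially the same route as the paper's argument: decompose a unital $R$-algebra automorphism into a diagonal automorphism (these form the kernel $(R^*)^k$) composed with a monomial automorphism $M_A$, $A\in\GL(k;\Z)$ (these give the splitting), with the trivial-units description of $R[\Z^k]^*$ for a domain $R$ as the single substantive input. The one step you should actually carry out rather than assert is the identification of the twisting: with your conventions ($M_A(s^{\beta})=s^{A\beta}$, factorization $\varphi=D_u\circ M_{A_\varphi}$), the conjugate $M_A\circ D_u\circ M_A^{-1}$ is the diagonal automorphism with components $u_i'=\prod_j u_j^{(A^{-1})_{ji}}$, i.e.\ the natural ``matrix-exponentiation'' action of $\GL(k;\Z)$ on $(R^*)^k\cong\operatorname{Hom}(\Z^k,R^*)$ (up to inverse/transpose depending on conventions); this is a genuine reindexing only when $A$ is a permutation matrix, and it is \emph{not} the diagonal-entries formula $\psi_A(\alpha_1,\ldots,\alpha_k)=(\alpha_1^{a_{11}},\ldots,\alpha_k^{a_{kk}})$ printed in the lemma, which is not even a group homomorphism in $A$ and should be read as a misprint of the natural action. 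So your claim that the conjugation ``is the map $\psi_A$ in the statement'' cannot be literally verified; the structure of your proof is right and matches the paper's, but the exponent bookkeeping in the last step needs to be done explicitly, and it corrects rather than reproduces the displayed $\psi$.
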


\begin{comment}
\begin{proof}
Since $\phi$ is an $R$-linear map, it suffices to consider $\phi(s_i)$, $1\leq i\leq k$. Write $\bar{\alpha}:=(\alpha_1, \ldots, \alpha_k)\in R^k$, then the automorphism given by $\phi_{\bar{\alpha}}(s_i)=\alpha_i s_i$, $1\leq i\leq k$ is an $R$-linear automorphism of $R[s_1^{\pm1}, \ldots, s_k^{\pm1}]$. Since $\phi(s_i s_i^{-1})=1$, $\alpha_i$ is a unit in $R$ and ${\bar{\alpha}}\subset (R^*)^k$. Next,
consider the automorphism $\phi_A$ defined by $\phi_A(s_i)=s_j^{a_{ij}}$ for a matrix $A=(a_{ij})\in M_{k}(\Z)$; since $\phi_A(s_i s_i^{-1})=1$, $A=(a_{ij})\in \GL(k,\Z)$. All $R$-linear algebra automorphisms of $R[s_1^{\pm1}, \ldots, s_k^{\pm1}]$ are compositions of automorphisms $\phi_{\bar{\alpha}}$ and $\phi_A$. Note that $ \phi_A \circ \phi_{\bar{\alpha}} = \phi_{\bar{\beta}}\circ \phi_A $ where $\bar{\beta}=( \alpha_1^{a_{11}}, \ldots, \alpha_k^{a_{kk}})$. Let $\psi:\GL(k;\Z)\rightarrow Aut((R^*)^k)$ be given by $\psi_{A}(\bar\alpha)=(\alpha_1^{a_{11}}, \ldots, \alpha_k^{a_{kk}}),$ where $A=(a_{ij})\in \GL(k,\Z)$.  So, the group of $R$-linear automorphisms of $R[s_1^{\pm1}, \ldots, s_k^{\pm1}]$ is given by $(R^*)^k\rtimes_{\psi} \GL(k;\Z)$.
\end{proof}
\end{comment}

\begin{remark}
Therefore, the group of $\Z_2$-linear automorphisms of $\Z_2[s_1^{\pm1}, \ldots, s_k^{\pm1}]$ is $\GL(k;\Z)$. For $R=\Z_2[H_1(\Lambda)]\simeq \Z_2[t_1^{\pm1}, \ldots,t_l^{\pm1} ]$, the group of $R$-linear automorphisms of $R[s_1^{\pm1},  \ldots, s_k^{\pm1}]$ is $(\{t_{1}^{\alpha_1}\cdots t_{l}^{\alpha_l}~|~\alpha_1,\ldots,\alpha_l \in \Z\})^k \rtimes_{\psi} \GL(k;\Z)$. In this note $R=\Z_2$ suffices for our computations, but they could be upgraded to $R=\Z_2[H_*(\Lambda)]$. We will not consider base points on $\La$ in this note.\hfill$\Box$
\end{remark}

From now onwards $R$ will always denote a unital integral domain. Let $L_1,L_2$ be two exact Lagrangian fillings of a Legendrian $\Lambda\sse(\R^{2n+1},\xi_{std})$ such that $b_1(L_1)=b_1(L_2)=k$, for some $k\in\N$. These each induce a $k$-system of augmentations $\e_{L_1}$ and $\e_{L_2}$.\footnote{For discussion on base points, see \cite{CasalsNg}. These are often used if $R=\Z$, as in that manuscript. That said, we focus on $R=\Z_2$ in this note and we do not need base points.} If $n=1$, and both $L_1$ and $L_2$ are orientable, $b_1(L_1)=b_1(L_2)$ is always satisfied since all orientable exact Lagrangian fillings of a Legendrian link $\Lambda$ in the symplectization of $(\R^3,\xi_{std})$ realize the smooth $4$-ball genus of the smooth link type of $\Lambda$, see~\cite[Theorem 1.4]{chantraine}. If $L_1$ and $L_2$ are compactly supported Hamiltonian isotopic, the isotopy between $L_1$ and $L_2$ induces an invertible map $\phi:H_1(L_1;\Z)\rightarrow H_1(L_2;\Z)$. In addition, it is shown in~\cite[Theorem 1.3]{ehk} that the augmentations induced by Hamiltonian isotopic $L_1$ and $L_2$ are chain homotopic. For $k$-systems of graded augmentations $\e_1,\e_2$ to the corresponding group rings $R[H_1(L_i,\Z)]$, $i=1,2$, we have that $\phi \circ \e_1$ and $\e_2$ are chain homotopic. That is, there exists a degree 1 chain map $K:(\mathcal{A}(\Lambda;R), \partial_{\Lambda})\rightarrow(R[H_1(L_2;\Z)], 0)$ such that $\phi\circ\e_1-\e_2=K\circ \partial$, fitting into the following commutative diagram.
\begin{center}
\begin{tikzcd}
C_{-2} \arrow{rd}[inner sep=1pt]{K} & C_{-1} \arrow{l}{\partial}  \arrow{rd}[inner sep=1pt]{K} & C_0\arrow{l}{\partial}   \arrow[d, shift left=1ex, "\phi\circ \e_1"] \arrow[d,shift right=2ex, "\e_2"]\arrow{rd}[inner sep=1pt]{K} & C_1 \arrow{l} & \arrow{l}\\
0 & 0 \arrow{l} & R[H_1(L;\Z)] \arrow{l} & 0 \arrow{l}& \arrow{l}
\end{tikzcd}
\end{center}

If there are no $(-1)$ degree Reeb chords and $\e_1,\e_2$ are chain homotopic, then $\phi \circ \e_1=\e_2$, where $\phi$ is an $R$-linear automorphism of $R[H_1(L;\Z)]$; see ~\cite[Lemma 3.5]{Pan}. In the general case, in the presence of $(-1)$ degree Reeb chords, $\e_1$ being chain homotopic to $\e_2$ implies that $(\phi \circ \e_1)(a)=\e_2(a)$, for any cycles $a$ in $C_0$. The above diagram and discussion cover the case of graded augmentations. The case of ungraded augmentations is parallel, except the bottom row in the diagram above should just have $R[H_1(L;\Z)]$, as maps are not required to preserve degree. In summary, the result we use, implied by the discussion above, is the following:

\begin{lemma}\label{lem:cycle}
Let $\La\sse(\R^3,\xi_{std})$ be a Legendrian link and $L_1,L_2$ exact Lagrangian fillings of $\Lambda$, each inducing $k$-system of augmentations $\e_{L_1},\e_{L_2}$. Suppose that $a\in(\A(\Lambda; R), \partial_{\Lambda})$ is a degree 0 cycle and that $L_1$ and $L_2$ are compactly supported Hamiltonian isotopic. Then there exists an automorphism $\phi\in (R^*)\rtimes_{\psi} \GL(k;\Z)$ such that $(\phi \circ \e_1)(a)=\e_2(a).$
\end{lemma}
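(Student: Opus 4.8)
The plan is to extract this lemma directly from the chain-homotopy discussion that immediately precedes it, so the proof should be short. First I would recall the set-up: since $L_1$ and $L_2$ are compactly supported Hamiltonian isotopic exact Lagrangian fillings of $\La$, the isotopy induces an isomorphism $\phi_*:H_1(L_1;\Z)\xrightarrow{\sim} H_1(L_2;\Z)$, hence an $R$-algebra isomorphism $\phi:R[H_1(L_1;\Z)]\to R[H_1(L_2;\Z)]$, which under the identifications $R[H_1(L_i;\Z)]\simeq R[s_1^{\pm1},\ldots,s_k^{\pm1}]$ (using $b_1(L_1)=b_1(L_2)=k$) is a unital $R$-linear automorphism of the Laurent polynomial ring. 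By Lemma~\ref{lem:equivalence}, $\phi$ lies in $(R^*)^k\rtimes_\psi\GL(k;\Z)$; I would note the statement writes $(R^*)\rtimes_\psi\GL(k;\Z)$, but the relevant point is just that $\phi$ is one of these coordinate-change automorphisms.

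Next I would invoke \cite[Theorem 1.3]{ehk}: Hamiltonian isotopic fillings induce chain-homotopic augmentations, so after composing with $\phi$ we get a degree $1$ map $K:(\mathcal{A}(\Lambda;R),\partial_\Lambda)\to(R[H_1(L_2;\Z)],0)$ with $\phi\circ\e_{L_1}-\e_{L_2}=K\circ\partial_\Lambda$ (this is exactly the commutative diagram displayed above the lemma). Now evaluate both sides on the given element $a$: since $a$ is a degree $0$ cycle, $\partial_\Lambda a=0$, hence $(K\circ\partial_\Lambda)(a)=K(\partial_\Lambda a)=K(0)=0$, so $(\phi\circ\e_{L_1})(a)=\e_{L_2}(a)$. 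This works identically in the graded and ungraded cases, as the preceding discussion observes, and it does not require the absence of $(-1)$-degree Reeb chords precisely because we only test against cycles in $C_0$.

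The only point requiring a word of care — and the closest thing to an obstacle — is the bookkeeping around the identification of $H_1(L_i;\Z)$ with $\Z^k$ and the claim that the induced map is a genuine unital $R$-algebra automorphism lying in the group described by Lemma~\ref{lem:equivalence}: one must use that a Hamiltonian isotopy of fillings gives a homotopy equivalence $L_1\simeq L_2$ (indeed a diffeomorphism in dimension $2$), so $\phi_*$ is an isomorphism of free abelian groups of the same rank $k$, and functoriality of the group-ring construction does the rest. Everything else is a one-line evaluation on a cycle, so I would present the proof in essentially the three steps above without further computation.
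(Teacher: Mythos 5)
Your argument is correct and is essentially the paper's own: the lemma is stated there as a direct consequence of the preceding discussion, namely the chain-homotopy statement of \cite[Theorem 1.3]{ehk} combined with the automorphism of $R[H_1(L;\Z)]$ induced by the Hamiltonian isotopy (classified by Lemma~\ref{lem:equivalence}), evaluated on the degree $0$ cycle $a$ so that the $K\circ\partial_\Lambda$ term vanishes. Your remark that the statement's $(R^*)\rtimes_\psi\GL(k;\Z)$ should read $(R^*)^k\rtimes_\psi\GL(k;\Z)$ is also well taken.
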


The following notion is useful when studying infinitely many Lagrangian fillings:
%A decorated Lagrangian filling $(L,\mathcal{L})$ for $\La$ is a pair consisting of an exact Lagrangian filling $L$ of $\La$ and a (primitive) sublattice $\mathcal{L}\sse H_1(L,\Z)/\mbox{tors}$, generated by primitive vectors, of the torsion-free part of $H_1(L,\Z)$. Similar to the discussion above, a decorated Lagrangian filling induces an augmentation $\e:(\mathcal{A}(\Lambda;R), \partial_{\Lambda})\rightarrow (R[\mathcal{L}],0)$. Two decorated Lagrangian fillings $(L,\mathcal{L})$ and $(L',\mathcal{L}')$ are considered equal if $L=L'$ and $\mathcal{L},\mathcal{L}'\sse H_1(L,\Z)$ are unimodular equivalent.

\begin{definition}\label{defn:auginf}
Let $\cH\sse\R^k$ be a rank $k$ integral lattice. A Legendrian submanifold $\Lambda$ is $R[\cH]$ aug-infinite if the collection of augmentations $\e: (\mathcal{A}(\Lambda;R), \partial_{\Lambda})\rightarrow (R[\cH],0)$ induced by Lagrangian fillings $L$, ranging over all possible such fillings $L$ such that $H_1(L,\Z)\simeq\cH$, is infinite. Here we consider two systems of augmentations to be equivalent, and thus contribute one to the above collection, if they are dga homotopic.\hfill$\Box$
\end{definition}

%An embedded exact Lagrangian filling $L$ produces finitely many decorated fillings $(L,\mathcal{L})$, as $H_1(L,\Z)$ is finite rank and has finitely many primitive generators. Therefore, 
By the discussion above, an $R[\cH]$ aug-infinite Legendrian submanifold $\Lambda$ must have infinitely many Lagrangian fillings, distinct up to compactly supported Hamiltonian isotopy. See also the notion of $\Z$ aug-infinite in \cite[Definition 7.2]{CasalsNg}.

%Note that for Legendrian links in~$(\R^{3},\xi_{std})$, all orientable exact (embedded) Lagrangian fillings realize the smooth 4-ball genus $g_4(\Lambda)$. Therefore, the set in Definition \ref{defn:auginf} restricted to orientable exact Lagrangians is nonempty only if $k=2g_4(\Lambda)$.\footnote{If we were to use base points $T\sse\La$, the lattice isomorphism would be modified to $\phi(\cH)\simeq H_1(L,T;\Z)$.}

Finally, all Lagrangian fillings discussed in this article will be decomposable, see \cite[Section 6]{ehk} or \cite[Section 2]{CasalsNg}. In brief, an exact Lagrangian cobordism is said to be decomposable if it is the concatenation of elementary exact Lagrangian cobordisms associated to Legendrian isotopies, pinch moves, and minimum cobordisms. See also \cite[Section 4]{CasalsNg} for a description of the dga maps induced by decomposable Lagrangian cobordisms. In particular, two Legendrians $\Lambda_{\pm}$ are related by a pinch move if $\Lambda_{-}$ is given by taking a (contractible or proper) crossing in the Lagrangian projection of $\Lambda_+$ and replacing it with its $0$-resolution. Figure~\ref{fig:orientable} shows orientable pinch moves and Figure~\ref{fig:nonorientable_pinch} shows non-orientable pinch moves. Note that not all Reeb chords of a Legendrian link can be pinched: they must be contractible or proper, see ~\cite[Definition 4.3]{CasalsNg}. If two Legendrians $\Lambda_-$ and $\Lambda_+$ are related by an orientable pinch move, then there exists an exact Lagrangian orientable saddle cobordism from $\Lambda_{-}$ to $\Lambda_+$. If they are instead related by a non-orientable pinch move, then there exists an exact Lagrangian non-orientable cobordism between $\Lambda_{-}$ and $\Lambda_{+}$ which is topologically an $\R\P^2$ with punctures union a collection of cylinders. Suppose $\Lambda_-$ and $\Lambda_+$ are related by a Legendrian isotopy, then the Lagrangian trace of this isotopy can be perturbed to be an exact Lagrangian cobordism from $\Lambda_-$ to $\Lambda_+$, cf.~\cite{ehk}. Finally, it is proven in \cite{eliashberg_polterovich} that any max-tb Legendrian representative of the unknot has a unique embedded exact Lagrangian disk filling, up to compactly supported Hamiltonian isotopy. A minimum cobordism from $\Lambda$ to $\Lambda \sqcup U$, where $U$ is Legendrian unlinked from $\La$, is the unlinked union of such an embedded exact Lagrangian disk filling of $U$ and the trivial  Lagrangian cylinder $\R \times \Lambda$. The name comes from the fact that it can be represented by a Lagrangian cobordism such that the projection onto the $\R$-factor of the symplectization has a unique critical point and it is a minimum.

\begin{remark}\label{rmk:dga_map_non_orientable}
The article \cite{CasalsNg} provides combinatorial descriptions of the dga map over $\Z[H_1(L;\Z)]$ induced by orientable pinch moves which can be computed from the Lagrangian projection of $\Lambda$; see~\cite[Section 4]{CasalsNg} for more details. The same dga map also holds for non-orientable pinch moves with minor modifications if we work over $\Z_2[H_1(L;\Z)]$ coefficients, as discussed in Section~\ref{sec:nonorientable}.\hfill$\Box$
\end{remark}

\begin{figure}
	\centering
	\begin{tikzpicture}[scale=1.5]
		\node[inner sep=0] at (0,0) {\includegraphics[width=5 cm]{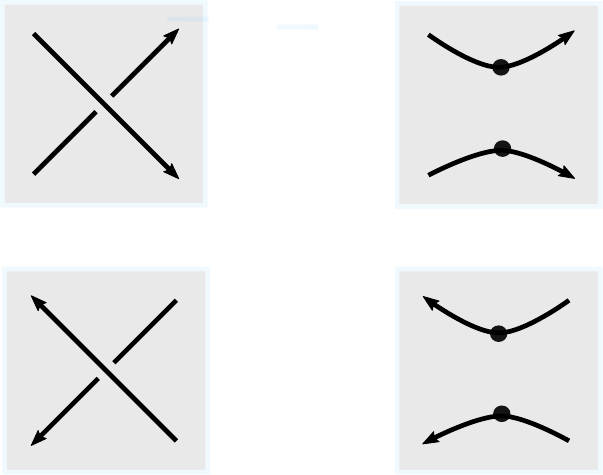}};
		\node at (-1.1,0.5){$a$};
		\node at (-1.1,-1){$a$};
		\node at (1.3,0.7){$s^{-1}$};
		\node at (1.1,1.1){$s$};
		\node at (1.1,-.3){$s^{-1}$};
		\node at (1.2,-0.8){$s$};
		\node at (0.1,0.6){$\boldsymbol{\rightarrow}$};
        \node at (0.1,-0.6){$\boldsymbol{\rightarrow}$};
	\end{tikzpicture}
	\caption{Orientable pinch moves at a Reeb chord $a$, where the crossing $a$ is replaced with its $0$-resolution and two basepoints $s, s^{-1}$. The basepoints are determined by the orientation of the arcs.} 
	\label{fig:orientable}
\end{figure}

\section{Newton polytopes distinguishing Lagrangian fillings}
\label{sec:proofs}

%%%%%%%%%%%%%%%%%%%%%%%%%%%%%%%%%%%%%%%%%%%%%%%%
%%%%%%%%%%%%%%%%%%%%%%%%%%%%%%%%%%%%%%%%%%%%%%%%

%%%%%%%%%%%%%%%%%%%%%%%%%%%%%%%%%%%%%%%%%%%%%%%%
%%%%%%%%%%%%%%%%%%%%%%%%%%%%%%%%%%%%%%%%%%%%%%%%

In this section we show that Newton polytopes can be used to distinguish augmentations induced by exact Lagrangian fillings of Legendrian links, in particular proving Theorems \ref{thm:main}, \ref{thm:inf} and \ref{thm:highdim}. Section~\ref{sec:newtonbackground} provides the necessary background on Newton polytopes. In Section~\ref{sec:lemmas} we collect the lemmas on powers of matrices with Laurent polynomial coefficients that will be used to compute Newton polytopes of augmentations in Section~\ref{sec:proofmain}. In Section~\ref{sec:proofmain} we prove Theorem~\ref{thm:main}. In Section~\ref{sec:cobordisms} we consider Legendrians related by exact Lagrangian cobordisms and prove Theorem~\ref{thm:inf}. Finally, in Section~\ref{sec:highdim} we study constructions of higher dimensional Legendrians and prove Theorem~\ref{thm:highdim}. Throughout this section $R$ is a base (unital) domain, which we take to be $R=\Z_2$ for all of our applications.

\subsection{Background on Newton polytopes.}\label{sec:newtonbackground}

Let $\cH=B(\Z^k)=\{Bx~|~x\in \Z^k\}$ be an integral lattice in $\R^k$, where $B$ is a basis of the $\R$-vector space $\R^k$. The collection $lin \cH =\{ Be_i\}_{i=1}^k$, where $e_i$ are the standard vectors of $\Z^k$, are said to be the linear spanning vectors of $\cH$. Given two integral lattices $\cH$ and $\cH'$, a linear map $\phi:lin\cH\rightarrow lin\cH'$ that induces a bijection from $\cH$ to $\cH'$ is said to be unimodular, a.k.a.~a lattice transformation. The group of lattice transformations for lattices of rank $k$ is the general linear group $\GL(k;\Z)$.

\begin{definition}
Consider an integral lattice $\Z^k\sse\R^k$ whose points we interpret as monomials in the variables $s_1^{\pm1}, \ldots, s_k^{\pm1}$; we denote this by $\Z^k\simeq\langle s_1^{\pm1}, \ldots, s_k^{\pm1}\rangle$. For any Laurent polynomial $f= \sum_{\alpha\in \Z^k} c_{\alpha}s^{\alpha}\in R[s_1^{\pm1}, \ldots, s_k^{\pm1}]$, its Newton polytope $N(f)$ is the lattice polytope in $\Z^k$ given by the convex hull of the exponent vectors in $f$, i.e.
$$N(f):=\mbox{Conv}\{ \alpha\in\Z^k|~c_{\alpha}\neq 0\}\subset \R^k.$$
Note that, by definition of a Laurent polynomial, only finitely many indices $\alpha~\in\Z^k$ have non-zero coefficient $c_{\alpha}\neq0$.
\hfill$\Box$
\end{definition}

Consider a $k$-system of augmentations $\e:(\A(\Lambda;R),\partial_{\Lambda})\rightarrow (R[s_1^{\pm1}, \ldots, s_k^{\pm1}], 0)$ and an element $a\in (\A(\Lambda;R),\partial_{\Lambda})$. The evaluation $\e(a)\in R[s_1^{\pm1}, \ldots, s_k^{\pm1}]$ is a Laurent polynomial. Let $N(\e, a)\sse\R^k$ denote the Newton polytope of this Laurent polynomial $\e(a)$. Note that if $\e=\e_L$ is induced by an exact Lagrangian filling $L$ of $\Lambda$, the lattice $\Z^k=\langle s_1^{\pm1}, \ldots, s_k^{\pm1}\rangle \cong H_1(L;\Z)$ is isomorphic to $H_1(L;\Z)$, which is free of rank $k$, for some $k\in\N$.

\begin{definition} Let $\cH,\cH'\sse\R^k$ be integral lattices. Two integral lattice polytopes $N\subset \cH$ and $N'\subset \cH'$ are said to be unimodular equivalent if there exists a lattice transformation $\phi\in \GL(k,\Z)$ such that $\phi(N\cap \cH)=N'\cap\cH'$, mapping the vertices of $N$ to those of $N'$. \hfill$\Box$
\end{definition}

\begin{definition}
An element $a\in(\mathcal{A}(\Lambda;R), \partial_{\Lambda})$ is said to be Newton infinite (over $R$) if the collection of integral lattice polytopes $N(\e,a)$, where $\e:\mathcal{A}(\Lambda;R)\lr R[H_1(L,\Z)]$ is induced by a Lagrangian filling $L$ of $\Lambda$, is infinite up to unimodular equivalence.\hfill$\Box$
\end{definition}

\begin{remark}\label{rem:newtoninfinite}
By Lemma~\ref{lem:cycle}, if an element $a$  of $(\mathcal{A}(\Lambda;\Z_2), \partial_{\Lambda})$ is Newton infinite and it is a cycle, i.e.~$\partial_{\Lambda}a=0$, then the Legendrian link $\Lambda$ is $\Z_2[\cH]$ aug-infinite for a lattice $\cH\simeq H_1(L,\Z)$, for some Lagrangian filling $L$ of $\La$.\hfill$\Box$
\end{remark}

\begin{remark}\label{rem:onereebchord}
Suppose that $\Lambda$ is $\Z_2[\cH]$ aug-infinite and has no $(-1)$ degree Reeb chords, so every $0$ graded Reeb chord is a cycle in the dga of $\Lambda$. Let $\{\e_i\}_{i\in \N}$ be a collection of $k$-systems of augmentations, induced by exact Lagrangian fillings of $\Lambda$, witnessing the aug-infinite property. Then, for every pair $i, j\in \N$, there exists a Reeb chord $x_{ij}$ of $\Lambda$ such that $N(\e_i,x_{ij})$ and $N(\e_j,x_{ij})$ are not unimodular equivalent. In addition, since $\Lambda$ has finitely many Reeb chords, there exists a subsequence of augmentations $\e_{i_l}$ such that for a single Reeb chord $x$ of $\Lambda$ the polytopes $N(\e_{i_l},x)$ and $N(\e_{i_{l'}},x)$ are not unimodular equivalent for $l\neq l'$. Thus, there is a Reeb chord $x$ that is Newton infinite.\hfill$\Box$
\end{remark}

The invariants of unimodular equivalence that we use are as follows. The following lemma is standard, e.g.~ proven in \cite[Corollary 2.75]{polytope}:

\begin{lemma}\label{lem:intpoints}
Let $\cH\sse\R^k$ be an integral lattice and $N\sse\cH$ an integral polytope. The
number of interior lattice points $\mbox{int}(N)\cap\cH$, the number of boundary lattice points $\dd N\cap\cH$, and the volume $\mbox{vol}(N)$ are invariant under unimodular equivalence.
\end{lemma}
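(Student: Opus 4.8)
\textbf{Proof proposal for Lemma~\ref{lem:intpoints}.} The plan is to reduce the three claimed invariances to the single fact that a unimodular equivalence is realized by an affine map of $\R^k$ whose linear part lies in $\GL(k;\Z)$, hence whose linear part is volume-preserving and carries the source lattice bijectively onto the target lattice. First I would unwind the definition: if $N\subset\cH$ and $N'\subset\cH'$ are unimodularly equivalent, there is $\phi\in\GL(k;\Z)$ (thought of as a linear isomorphism $\operatorname{lin}\cH\to\operatorname{lin}\cH'$) inducing a bijection $\cH\to\cH'$ with $\phi(N\cap\cH)=N'\cap\cH'$ and sending vertices to vertices; composing with a lattice translation if necessary, one may also allow an integral affine map, which changes nothing below. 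Since $\phi$ is a linear isomorphism of $\R^k$ taking the polytope $N$ onto $N'$, it takes the (topological) interior of $N$ onto that of $N'$ and the boundary $\partial N$ onto $\partial N'$.

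Next I would establish the lattice-point counts. Because $\phi$ restricts to a bijection $\cH\to\cH'$, it restricts further to a bijection between $N\cap\cH$ and $\phi(N)\cap\phi(\cH)=N'\cap\cH'$; intersecting instead with the interior gives a bijection $\operatorname{int}(N)\cap\cH\;\xrightarrow{\ \sim\ }\;\operatorname{int}(N')\cap\cH'$, using that $\phi(\operatorname{int}N)=\operatorname{int}N'$, and likewise a bijection $\partial N\cap\cH\to\partial N'\cap\cH'$. Hence $\#(\operatorname{int}(N)\cap\cH)=\#(\operatorname{int}(N')\cap\cH')$ and $\#(\partial N\cap\cH)=\#(\partial N'\cap\cH')$. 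For the volume statement, one normalizes volume so that the fundamental domain of $\cH$ has volume $1$ (equivalently $\operatorname{vol}(N):=|\det B|^{-1}\cdot\operatorname{vol}_{\text{Euclidean}}(N)$ when $\cH=B(\Z^k)$); then a change of variables shows $\operatorname{vol}(\phi(N))=|\det\phi|\cdot\operatorname{vol}(N)$ with respect to the corresponding normalizations, and since $\phi\in\GL(k;\Z)$ has $\det\phi=\pm1$, the normalized volume is preserved. If one instead prefers the lattice-independent formulation via Ehrhart theory, one can alternatively deduce all three from the invariance of the Ehrhart polynomial $t\mapsto\#(tN\cap\cH)$ under unimodular equivalence, reading off $\operatorname{vol}(N)$ as its leading coefficient and $\#(\partial N\cap\cH)$ from the linear coefficient together with reciprocity; I would present the direct argument above as the main line and mention Ehrhart theory only as a remark, since the reference \cite[Corollary 2.75]{polytope} already records the statement.

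The only genuinely content-bearing point — and thus the mild obstacle — is checking that a unimodular map takes topological interior to topological interior and boundary to boundary, which is immediate because $\phi$ is a homeomorphism of $\R^k$; and that the normalized volume is well defined independently of the choice of basis $B$ of $\cH$, which follows since two bases of the same lattice differ by an element of $\GL(k;\Z)$. Everything else is bookkeeping, so I would keep the write-up short, essentially citing \cite[Corollary 2.75]{polytope} after recording these observations.
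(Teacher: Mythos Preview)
Your argument is correct. The paper does not actually prove this lemma: it simply introduces it as standard and cites \cite[Corollary 2.75]{polytope}, with no further argument. Your direct proof via the fact that a unimodular equivalence is an affine homeomorphism of $\R^k$ with linear part in $\GL(k;\Z)$, hence lattice-bijective and of determinant $\pm1$, is exactly the elementary justification one would give, and it is more than the paper provides. Your closing remark, that one could alternatively cite the reference, is in fact all the paper does.
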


\subsection{Linear algebra lemmata.}\label{sec:lemmas} We use the following lemmata to prove Theorem~\ref{thm:main}.

\begin{lemma}\label{lem:thirdvariable} Let $R$ be a commutative unital ring. Consider
$M_{i}=\left(\begin{array}{cc}
a_{i}&b_{i}\\
c_{i} & d_{i}
\end{array}\right)\in M_2(R)$ matrices over a commutative ring $R$, $1\leq i\leq r$, $r\in\N$. Fix a unit $u\in R$ and define new matrices $M'_{i}:=\left(\begin{array}{cc}
a_{i}&b_{i}u\\
c_{i}u^{-1} & d_{i}
\end{array}\right)$.

If the product $M_{1}\cdots M_{r}$ is $\left(\begin{array}{cc}
a&b\\
c & d
\end{array}\right)$, then the product $M_{1}'\cdots M_{r}'$ is $\left(\begin{array}{cc}
a&bu\\
cu^{-1} & d
\end{array}\right)$. \\In particular, the diagonal entries of these two products agree.
\end{lemma}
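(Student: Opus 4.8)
The plan is to prove this by a direct conjugation argument. Consider the diagonal matrix $D = \left(\begin{smallmatrix} u & 0 \\ 0 & 1 \end{smallmatrix}\right)$, which is invertible over $R$ since $u$ is a unit, with $D^{-1} = \left(\begin{smallmatrix} u^{-1} & 0 \\ 0 & 1 \end{smallmatrix}\right)$. The key observation is that $M_i' = D^{-1} M_i D$: indeed, $D^{-1} M_i D = \left(\begin{smallmatrix} u^{-1} & 0 \\ 0 & 1 \end{smallmatrix}\right)\left(\begin{smallmatrix} a_i & b_i \\ c_i & d_i \end{smallmatrix}\right)\left(\begin{smallmatrix} u & 0 \\ 0 & 1 \end{smallmatrix}\right) = \left(\begin{smallmatrix} a_i & b_i u \\ c_i u^{-1} & d_i \end{smallmatrix}\right)$, which is exactly $M_i'$.

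Given this, the product telescopes:
\[
M_1' \cdots M_r' = (D^{-1} M_1 D)(D^{-1} M_2 D)\cdots (D^{-1} M_r D) = D^{-1} (M_1 \cdots M_r) D,
\]
since each interior pair $D D^{-1}$ cancels. Writing $M_1 \cdots M_r = \left(\begin{smallmatrix} a & b \\ c & d \end{smallmatrix}\right)$, the same one-line computation as above gives $D^{-1}\left(\begin{smallmatrix} a & b \\ c & d \end{smallmatrix}\right)D = \left(\begin{smallmatrix} a & b u \\ c u^{-1} & d \end{smallmatrix}\right)$, which is the claimed form. The diagonal entries are unchanged because conjugation by a diagonal matrix only rescales the off-diagonal entries.

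I do not expect any serious obstacle here; the statement is essentially the observation that $M \mapsto D^{-1} M D$ is an algebra homomorphism (in fact automorphism) of $M_2(R)$ and that it has the stated effect on matrix entries. The only small point to be careful about is that $R$ is merely commutative (not necessarily a domain), so one should only use that $u$ is a unit — which is exactly what makes $D$ invertible — and avoid any cancellation arguments; but the proof above uses nothing beyond invertibility of $D$ and associativity of matrix multiplication, so this is automatic. One could optionally remark that this lemma will be applied with $R = \Z_2[s_1^{\pm 1}, \ldots, s_k^{\pm 1}]$ and $u$ a monomial, explaining its relevance to tracking how augmented Reeb chord values transform, but that is not needed for the proof itself.
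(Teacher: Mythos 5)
Your strategy is exactly the paper's: realize $M_i\mapsto M_i'$ as conjugation by an invertible diagonal matrix and let the product telescope. However, the key identity you assert is false as written. With your $D=\begin{pmatrix} u & 0\\ 0 & 1\end{pmatrix}$, conjugation by a diagonal matrix $\operatorname{diag}(d_1,d_2)$ multiplies the $(i,j)$ entry by $d_i^{-1}d_j$, so
\[
D^{-1}M_iD \;=\; \begin{pmatrix} a_i & b_iu^{-1}\\ c_iu & d_i\end{pmatrix},
\]
which is \emph{not} $M_i'$ but the matrix obtained from $M_i'$ by replacing $u$ with $u^{-1}$; your displayed one-line computation silently swaps the two off-diagonal factors. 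The correct identity is $M_i'=D\,M_i\,D^{-1}$, or equivalently $M_i'=U^{-1}M_iU$ with $U=\begin{pmatrix}1 & 0\\ 0 & u\end{pmatrix}$, which is precisely the conjugating matrix the paper uses.

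This is a one-character fix rather than a conceptual gap: once the direction of conjugation is corrected, the telescoping of the interior $DD^{-1}$ (or $U U^{-1}$) factors, the evaluation of $D(M_1\cdots M_r)D^{-1}$, and the remark that diagonal conjugation preserves diagonal entries all go through verbatim and coincide with the paper's proof. Alternatively, your identity as actually computed proves the lemma with $u$ replaced by $u^{-1}$, which is logically equivalent since $u$ is an arbitrary unit; but as a proof of the stated lemma the displayed computation should be corrected. Your remarks about only using invertibility of the diagonal matrix over a commutative ring (no domain hypothesis, no cancellation) are accurate and consistent with the paper.
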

\begin{proof} Note that $M_i'$ is conjugate to $M_i$ via
$$M'_{i}=U^{-1}M_iU,\qquad\mbox{where }U:=\left(\begin{array}{cc}
1&0\\
0 & u
\end{array}\right).$$
Therefore $M_{1}'\cdots M_{r}'=U^{-1}\cdot M_{1}\cdots M_{r}\cdot U$ as required.
\end{proof}

\begin{lemma}\label{lem:convexhull}
For $n\geq 2$, the upper left entry of the $n^{th}$ power of the matrix
$$\begin{pmatrix}\tilde{x}+\tilde{y}&1\\
\tilde{x}&1\end{pmatrix}\in M_{2}(\Z_2[\tilde x, \tilde{y}])$$
is a polynomial $\alpha^n(\tilde{x},\tilde{y})\in \Z_2[\tilde{x}, \tilde{y}]$ containing terms $\tilde{x}, \tilde{x}^n,$ and $\tilde{y}^{n}$. In addition, the Newton polytope in $\Z^2\sse\R^2$ associated to $\alpha^n(\tilde{x},\tilde{y})$, when understood as a Laurent polynomial, is the convex hull of these three monomials.
\end{lemma}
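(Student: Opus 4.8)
The plan is to analyze the matrix $A = \begin{pmatrix}\tilde x + \tilde y & 1\\ \tilde x & 1\end{pmatrix}$ by setting up a recursion for the entries of $A^n$, extracting the three claimed monomials by direct inspection, and then showing that every exponent vector appearing in $\alpha^n$ lies in the triangle with vertices $(1,0)$, $(n,0)$, $(0,n)$. First I would write $A^n = \begin{pmatrix}\alpha^n & \beta^n\\ \gamma^n & \delta^n\end{pmatrix}$ and record the scalar recursions coming from $A^{n} = A\cdot A^{n-1}$: namely $\alpha^n = (\tilde x+\tilde y)\alpha^{n-1} + \gamma^{n-1}$ and $\gamma^n = \tilde x\,\alpha^{n-1} + \gamma^{n-1}$, with $\alpha^1 = \tilde x + \tilde y$, $\gamma^1 = \tilde x$. (Working over $\Z_2$ means I never worry about signs or cancellation of like terms, only about whether a coefficient is $0$ or $1$.)

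For the presence of the three monomials: $\tilde y^n$ appears in $\alpha^n$ because the only way to collect a pure power of $\tilde y$ is to apply the $(\tilde x+\tilde y)$-multiplication $n$ times and pick $\tilde y$ each time — the $\gamma$-terms never contribute a $\tilde y$ — so the coefficient of $\tilde y^n$ is exactly $1$. Dually, $\tilde x^n$ appears: one checks by induction that $\alpha^n$ and $\gamma^n$ each contain $\tilde x^n$ with coefficient $1$, since $(\tilde x+\tilde y)\alpha^{n-1}$ contributes $\tilde x\cdot \tilde x^{n-1}$ and no other term of that bidegree can appear. For the linear monomial $\tilde x$: I claim $\alpha^n$ contains $\tilde x$ with coefficient $1$ and $\gamma^n$ contains the constant $1$; inductively, $\gamma^n = \tilde x\,\alpha^{n-1} + \gamma^{n-1}$ has constant term equal to the constant term of $\gamma^{n-1}$, which is $1$ by induction (base case $\gamma^1 = \tilde x$ has... actually $\gamma^1=\tilde x$ has no constant term, so one starts the bookkeeping at $n=2$: $\gamma^2 = \tilde x(\tilde x+\tilde y) + \tilde x = \tilde x^2 + \tilde x\tilde y + \tilde x$, $\alpha^2 = (\tilde x+\tilde y)^2 + \tilde x = \tilde x^2 + \tilde y^2 + \tilde x$ over $\Z_2$, which visibly contains $\tilde x$, $\tilde x^2$, $\tilde y^2$), and then $\alpha^n = (\tilde x+\tilde y)\alpha^{n-1} + \gamma^{n-1}$ picks up $\tilde x$ from $\tilde x\cdot(\text{constant term of }\alpha^{n-1})$ plus possibly from $\gamma^{n-1}$; the induction hypothesis that keeps this clean is that $\alpha^{n-1}$ has no constant term but does contain $\tilde x$, and $\gamma^{n-1}$ has constant term $1$. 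I would state this joint induction hypothesis precisely (constant terms and the coefficients of $\tilde x$, $\tilde x^n$, $\tilde y^n$ in both $\alpha^n$ and $\gamma^n$) and verify it.

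For the containment $N(\alpha^n)\subseteq \mathrm{Conv}\{(1,0),(n,0),(0,n)\}$: every monomial of $\alpha^n$ and $\gamma^n$ has total degree at most $n$ (immediate induction, since multiplying by $\tilde x+\tilde y$ raises degree by $1$ and the additive $\gamma$-term has smaller degree), giving the edge from $(n,0)$ to $(0,n)$. The exponent of $\tilde y$ never exceeds $n$, and more to the point every monomial of $\alpha^n$ other than the pure powers of $\tilde y$ has $\tilde x$-exponent at least $1$: indeed $\gamma^n$ always carries a factor related to $\tilde x$ except for its constant term, and the constant term of $\gamma^{n-1}$ feeds into $\alpha^n$ only through... here I need to be a little careful, since $\gamma^{n-1}$ has constant term $1$ and that enters $\alpha^n$ additively, producing a constant term in $\alpha^n$ — but wait, the induction above says $\alpha^n$ has no constant term, so that constant must be cancelled; over $\Z_2$ I should instead track parities. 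The cleaner route: show by induction that every monomial of $\alpha^n$ lies in the stated triangle and every monomial of $\gamma^n$ lies in $\mathrm{Conv}\{(0,0),(n-1,0),(0,n-1)\}$ shifted appropriately — precisely, $N(\gamma^n)\subseteq \mathrm{Conv}\{(0,0),(n,0),(0,n-1)\}$ perhaps — and propagate. The main obstacle I anticipate is exactly this bookkeeping of which low-degree monomials (constant term, linear terms) survive in $\Z_2$: the recursion mixes $\alpha$ and $\gamma$, and proving that no exponent vector escapes the triangle requires a carefully chosen pair of nested regions for $\alpha^n$ and $\gamma^n$ that are each stable under the recursion. Once the correct invariant regions are identified, the induction is routine; combined with the three explicitly-exhibited vertices, this shows $N(\alpha^n)$ is exactly that triangle, since a convex hull containing all of a polytope's vertices and contained in the polytope equals it.
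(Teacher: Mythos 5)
Your overall strategy (the recursion $\alpha^n=(\tilde x+\tilde y)\alpha^{n-1}+\gamma^{n-1}$, $\gamma^n=\tilde x\,\alpha^{n-1}+\gamma^{n-1}$ plus a joint induction) can be made to work, but the joint induction hypothesis you actually state is false, and it is exactly the piece you need. You claim ``$\gamma^{n-1}$ has constant term $1$'' and that $\alpha^n$ picks up its $\tilde x$-term from $\tilde x\cdot(\text{constant term of }\alpha^{n-1})$. Neither holds: every monomial of $\gamma^m$ is divisible by $\tilde x$ (base case $\gamma^1=\tilde x$, and the recursion $\gamma^m=\tilde x\,\alpha^{m-1}+\gamma^{m-1}$ preserves this), so $\gamma^m$ has \emph{no} constant term -- your own computation $\gamma^2=\tilde x^2+\tilde x\tilde y+\tilde x$ already shows this -- and likewise $\alpha^m$ has no constant term. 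The contradiction you ran into (``$\gamma^{n-1}$ has constant term $1$, so $\alpha^n$ acquires a constant term, but the induction says it has none'') is not a parity subtlety of $\Z_2$; it is the sign that the hypothesis is wrong. Moreover, the constant monomial $(0,0)$ lies outside the triangle $\mathrm{Conv}\{(1,0),(n,0),(0,n)\}$, so an argument built on $\gamma^{n-1}$ having constant term $1$ could not possibly yield the stated Newton polytope. Finally, for the containment step you explicitly defer the key point (``once the correct invariant regions are identified, the induction is routine''), so the proof as written is incomplete precisely where the work is.

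The repair is not hard but must be stated correctly: the invariant to carry is (i) every monomial of $\gamma^m$ is divisible by $\tilde x$, and the coefficient of $\tilde x$ in $\gamma^m$ is $1$; (ii) $\alpha^m$ has no constant term and its only pure power of $\tilde y$ is $\tilde y^m$, with coefficient $1$; (iii) $\deg\alpha^m,\deg\gamma^m\le m$ and the coefficient of $\tilde x^m$ in both is $1$. Then the $\tilde x$-term of $\alpha^n$ comes from the additive $\gamma^{n-1}$ (not from a constant term of $\alpha^{n-1}$), and every monomial of $\alpha^n$ is either $\tilde y^n$ or has $\tilde x$-exponent $\ge 1$ and total degree $\le n$, hence lies in the triangle; this is what makes the induction close. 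Note that the paper avoids this bookkeeping entirely by specializing: setting $\tilde x=0$ gives an upper-triangular matrix whose $n$-th power has upper-left entry $\tilde y^n$, showing the only pure $\tilde y$-monomial (including the constant) is $\tilde y^n$; setting $\tilde y=0$ gives upper-left entry $\tilde x(1+\tilde x)^{n-1}$, producing $\tilde x$ and $\tilde x^n$; together with the trivial degree bound this yields the polytope at once. Either fix your invariant as above or switch to the specialization argument; as submitted, the central inductive claim is incorrect.
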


\begin{proof}
Let us denote the matrix in the statement by $M$. The entries of $M^n$ are polynomials of degree (at most) $n$ in the entries of $M$ and therefore are themselves polynomials of degree at most $n$ in $\tilde x,\tilde y$. Let us simplify notation and write $x,y$ for $\tilde x,\tilde y$.

First, let us argue that the upper left entry of $M^n$ contains only one monomial of the form $y^k$, $0\leq k\leq n$, and it is precisely $y^n$, i.e. $k=n$ and with coefficient equal to one, even over $\Z$. Indeed, monomials of this form $y^k$ are the monomials remaining when we set $x=0$ in $M^n$. Therefore, they are the monomials in the upper left entry of the $n$th power of
$$M|_{x=0}=
\begin{pmatrix}y&1\\
0&1\end{pmatrix},\mbox{ which is }(M|_{x=0})^n=\begin{pmatrix} y^n & 1\\
0& 1\\
\end{pmatrix}.
$$
Second, let us argue that the monomials of the form $x^k$, $1\leq k\leq n$ in the upper left entry of $M^n$ always contain $x$ and $x^n$, i.e. the cases $k=1,n$ and both with coefficient equal to one. Again, monomials of this form $x^k$ are the monomials remaining when we set $y=0$ in $M^n$. Hence, they are the monomials appearing in the upper left entry of the $n$th power of

$$M|_{y=0}=
\begin{pmatrix}x&1\\
x&1\end{pmatrix},\mbox{ which is }(M|_{x=0})^n=
(1+x)^{n-1}\begin{pmatrix} x & 1\\
x& 1\\
\end{pmatrix}.
$$
Since the upper left entry is $x(1+x)^{n-1}$, it always contains the monomials $x$ and $x^n$. This proves the first statement: the polynomial $\alpha^n(x,y)\in \Z_2[x,y]$ contains always terms $x,x^n$ and $y^{n}$ with coefficient equal to one.

Finally, we note that the Newton polytope of $\alpha^n(x,y)\in \Z_2[x,y]$ is in fact the convex hull of (the points in $\Z^2$ associated to) $x,x^n$ and $y^n$. Indeed, we have proven that $\alpha^n(x,y)$ contains $x,x^n$ and $y^n$ and {\it does not} contain any term of the form $c\cdot y^k$, where $0\leq k\leq n-1$ and $c$ is a constant. In addition, all others monomials of $\alpha^n(x,y)$ are of degree at most $n$, and not powers of $y$, and thus lie in the convex hull of $x,x^n$ and $y^n$.
\end{proof}

Lemma \ref{lem:convexhull} and its proof illustrate an advantage of using Newton polytopes: there is no need to compute the entire Laurent polynomial entries, it suffices to keep track of monomials which generate the convex hull. In the particular matrix above, the number of Laurent monomials per coefficient varies as a function of the power $n$ but it suffices to focus only on the 3 monomials $\tilde x,\tilde x^n,\tilde y^n$ for each $n$.

\begin{cor}\label{cor:alpha} Let $i:\R^2_{\langle x,y\rangle}\lr \R^d$ be an embedding of the form $i(x,y)=(x,y,\iota(x,y))$, where $\iota(x,y):\R^2\lr\R^{d-2}$ is a linear map, $d\geq2$.
Let $n,m\in \N$ be two distinct numbers. Then there does not exist a change of coordinates $\phi \in \GL(d,\Z)$ such that $$\phi^*(i( \alpha^n(\tilde{x},\tilde{y})))=i(\alpha^m(\tilde{x},\tilde{y})).$$
\end{cor}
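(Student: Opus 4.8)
The plan is to reduce the statement about $\GL(d,\Z)$-equivalence of the embedded polytopes to the two-dimensional lattice-point invariant established in Lemma~\ref{lem:convexhull}. First I would recall that, by Lemma~\ref{lem:convexhull}, the Newton polytope of $\alpha^n(\tilde x,\tilde y)$ in $\R^2$ is the triangle $T_n:=\mathrm{Conv}\{(1,0),(n,0),(0,n)\}$, and similarly $N(\alpha^m)=T_m$. Under the embedding $i(x,y)=(x,y,\iota(x,y))$, the Newton polytope of $i(\alpha^n(\tilde x,\tilde y))$ is just the image $i(T_n)\subset\R^d$; since $i$ is an affine-linear injection, $i(T_n)$ is a $2$-dimensional lattice polytope in $\R^d$ that is unimodularly isomorphic (as an abstract lattice polytope in the sublattice $i(\Z^2)$) to $T_n$ — here I would note that $i$ maps $\Z^2$ isomorphically onto the saturated sublattice $i(\Z^2)\cap\Z^d$ because the first two coordinates of $i(x,y)$ are $x,y$ themselves, so a lattice point of $\R^d$ lying on $i(\R^2)$ is automatically in $i(\Z^2)$.

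Next I would observe that if some $\phi\in\GL(d,\Z)$ satisfied $\phi^*(N(i(\alpha^n)))=N(i(\alpha^m))$, i.e. $\phi(i(T_n))=i(T_m)$ as lattice polytopes, then $\phi$ would carry the affine lattice plane spanned by $i(T_n)$ to that spanned by $i(T_m)$, and hence would restrict to a unimodular lattice isomorphism between $i(\Z^2)\cap\Z^d$ and $i(\Z^2)\cap\Z^d$ taking $i(T_n)$ to $i(T_m)$. Composing with $i$ on both sides, this produces an element of $\GL(2,\Z)$ (together with a lattice translation, since unimodular equivalence of polytopes allows an affine shift) taking $T_n$ to $T_m$ inside $\R^2$. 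Then I would invoke Lemma~\ref{lem:intpoints}: the number of lattice points of $T_n$ is an invariant of unimodular equivalence. A direct count gives $|T_n\cap\Z^2|=\binom{n+1}{2}+1$ (the full triangle $\mathrm{Conv}\{(0,0),(n,0),(0,n)\}$ has $\binom{n+2}{2}$ lattice points, and removing the edge from $(0,0)$ to $(0,n)$ except its interaction with the rest removes exactly $n$ of them, leaving $\binom{n+2}{2}-n=\binom{n+1}{2}+1$); in any case this count is a strictly increasing function of $n$, so $T_n$ and $T_m$ cannot be unimodularly equivalent when $n\neq m$, a contradiction.

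I would present this carefully because the only subtlety is the bookkeeping of lattices: one must check that a $\GL(d,\Z)$ transformation of the ambient space, when it happens to match up two polytopes each living in an affine $2$-plane, genuinely induces a unimodular map of the relevant rank-$2$ lattices rather than something coarser. The cleanest way to see this is that $i(T_n)$ and $i(T_m)$ each affinely span a plane containing lattice points of $\Z^d$, their affine lattices are $i(\Z^2)+(\text{translate})$, these are saturated in $\Z^d$ (as noted above, because $i$ is the identity in the first two coordinates), and $\phi\in\GL(d,\Z)$ sends saturated sublattices to saturated sublattices preserving index, hence sends the rank-$2$ affine lattice underlying $i(T_n)$ isomorphically onto that underlying $i(T_m)$. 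Pulling back along $i$ turns this into an affine unimodular equivalence $T_n\to T_m$ in $\R^2$, and the lattice-point count of Lemma~\ref{lem:intpoints} finishes the argument.

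The main obstacle is precisely this lattice-theoretic step — making rigorous that ambient $\GL(d,\Z)$-equivalence descends to unimodular equivalence of the $2$-dimensional faces, and in particular that translations are harmless — rather than anything about the polytopes $T_n$ themselves, whose lattice-point counts are elementary. Everything else is a direct application of Lemma~\ref{lem:convexhull} and Lemma~\ref{lem:intpoints}.
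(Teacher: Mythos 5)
Your proof is correct and follows essentially the same route as the paper: identify $N(\alpha^n)$ as the triangle with vertices $(1,0),(n,0),(0,n)$ via Lemma~\ref{lem:convexhull}, count its lattice points (your direct count $\binom{n+1}{2}+1$ agrees with the paper's Pick's-theorem computation of $2n$ boundary and $\tfrac{1}{2}(n^2-3n)+1$ interior points), and conclude by the $\GL(d,\Z)$-invariance of lattice-point counts from Lemma~\ref{lem:intpoints}. Your extra care in checking that an ambient $\GL(d,\Z)$ equivalence descends to a unimodular equivalence of the rank-two lattices (via saturation of $i(\Z^2)$ in $\Z^d$) is a welcome elaboration of a step the paper leaves implicit, not a different method.
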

\begin{proof}
By Lemma~\ref{lem:convexhull}, the Newton polytope of $\alpha^n(\tilde{x},\tilde{y})$ is unimodular equivalent to the convex hull of the points $(1,0),(n,0),(0,n)$ in $\R^2_{\langle \tilde{x},\tilde{y}\rangle}$. Such a lattice polytope has $2n$ boundary lattice points and area $\frac{n}{2}(n-1)$. Pick's theorem implies that the number of interior lattice points is $\frac{1}{2}(n^2-3n)+1$. By Lemma~\ref{lem:intpoints}, the number of lattice points of a polytope in $\R^d$ is a $\GL(d,\Z)$ lattice invariant and thus $i(N(\alpha^n))$ is not unimodular equivalent to $i(N(\alpha^m))$ in $\R^d$ if $n\neq m$.
\end{proof}

We use the following two lemmas for the specific augmentations employed in Subsection \ref{sec:proofmain}. Both lemmas are consequences of Lemma \ref{lem:convexhull}.

\begin{lemma}\label{lemlambda11}
For $n\geq 2$, the upper left entry of the $n^{th}$ power of the matrix $$\begin{pmatrix}xy^{-1}+z^{-2}&y^{-1}\\
x&1\end{pmatrix}\in M_2(\Z_2[x^{\pm1}, y^{\pm1}, z^{\pm1}])$$ is a Laurent polynomial $\beta^n(x,y,z)\in \Z_2[x^{\pm1}, y^{\pm1}, z^{\pm1}]$ containing the terms $z^{-2n}, xy^{-1}$, and $x^ny^{-n}$, with coefficient 1. In addition, the Newton polytope associated to $\beta^n(x,y,z)$ is the convex hull of these three monomials.
\end{lemma}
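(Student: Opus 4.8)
The plan is to reduce Lemma~\ref{lemlambda11} to Lemma~\ref{lem:convexhull} by a change of variables, exactly as the phrase ``now tailored to the specific augmentations'' suggests. Write $M = \begin{pmatrix} xy^{-1}+z^{-2} & y^{-1} \\ x & 1\end{pmatrix}$ and set $\tilde{x} := xy^{-1}$, $\tilde{y} := z^{-2}$. Then $M$ has the shape
$$\begin{pmatrix} \tilde{x}+\tilde{y} & y^{-1} \\ x & 1\end{pmatrix},$$
which is \emph{not} literally the matrix of Lemma~\ref{lem:convexhull} because the off-diagonal entries are $y^{-1}$ and $x$ rather than $1$ and $\tilde{x}$. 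So the first step is to apply Lemma~\ref{lem:thirdvariable} to strip off the discrepancy: conjugating $\begin{pmatrix}\tilde x+\tilde y & 1\\ \tilde x & 1\end{pmatrix}$ by a diagonal unit only scales the off-diagonal entries and leaves the diagonal entries — in particular the upper left entry — unchanged. Concretely, I would observe that $M = U^{-1}\begin{pmatrix}\tilde x + \tilde y & 1 \\ \tilde x & 1\end{pmatrix}U$ is false in general because the $(2,1)$ entry of the model matrix is $\tilde x = xy^{-1}$, not $x$; the cleanest route is instead to factor $M$'s $(2,1)$ entry. Let me record the correct bookkeeping: replace the model matrix $M_0 = \begin{pmatrix}\tilde x+\tilde y & 1\\ \tilde x & 1\end{pmatrix}$ and note that our $M$ equals $M_0$ after the substitution $\tilde x \mapsto xy^{-1},\ \tilde y\mapsto z^{-2}$ \emph{together with} replacing the off-diagonal pair $(1,\tilde x)$ by $(y^{-1}, x)$; since $y^{-1}\cdot x = xy^{-1} = \tilde x$ is the product of the model's off-diagonal entries, Lemma~\ref{lem:thirdvariable} (applied with the unit $u$ chosen so that $1\cdot u = y^{-1}$, i.e. $u=y^{-1}$, and noting $\tilde x u^{-1} = xy^{-1}\cdot y = x$) says the upper-left entry of $M^n$ equals the upper-left entry of $M_0^n$ evaluated at $\tilde x = xy^{-1}$, $\tilde y = z^{-2}$.

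The second step is to invoke Lemma~\ref{lem:convexhull} directly: the upper-left entry of $M_0^n$ is $\alpha^n(\tilde x,\tilde y)$, a polynomial in $\Z_2[\tilde x,\tilde y]$ whose Newton polytope is the convex hull of the monomials $\tilde x$, $\tilde x^n$, $\tilde y^n$ and which contains each of these three monomials with coefficient $1$. Substituting back gives $\beta^n(x,y,z) = \alpha^n(xy^{-1}, z^{-2})$, and the three distinguished monomials become $xy^{-1}$, $x^ny^{-n}$, and $z^{-2n}$ respectively, each still with coefficient $1$ since substitution of monomials for variables does not alter coefficients. Every other monomial $\tilde x^a\tilde y^b$ of $\alpha^n$ lies in $\mathrm{Conv}\{(1,0),(n,0),(0,n)\}$ in the $(\tilde x,\tilde y)$-lattice, and the monomial substitution $(\tilde x,\tilde y)\mapsto (xy^{-1}, z^{-2})$ is the restriction to the lattice of the linear map $\R^2\to\R^3$, $(a,b)\mapsto (a, -a, -2b)$; a linear map sends convex hulls to convex hulls, so the exponent vectors of $\beta^n$ all lie in $\mathrm{Conv}\{(1,-1,0),(n,-n,0),(0,0,-2n)\}$, which is exactly the convex hull of the three distinguished monomials of $\beta^n$. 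This establishes both assertions.

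The main obstacle — really the only subtle point — is getting the hypotheses of Lemma~\ref{lem:thirdvariable} to line up, since that lemma is phrased for a single fixed product of matrices $M_1\cdots M_r$ and a \emph{common} rescaling unit $u$, whereas here I want it for $M^n = M\cdots M$ ($n$ times) with the same $M$. That is fine: take all $M_i$ equal to $M_0$ and $u = y^{-1}$; the lemma then directly yields the conjugation $M^n = U^{-1}M_0^n U$ with $U = \mathrm{diag}(1,y^{-1})$ after the variable substitution, and conjugation by a diagonal matrix preserves diagonal entries. One should double-check the direction of the unit (whether the off-diagonal entries want $u$ or $u^{-1}$) by comparing the $(1,2)$ entries: the model has $1$, we want $y^{-1}$, so $u = y^{-1}$, and then the $(2,1)$ entry becomes $\tilde x u^{-1} = (xy^{-1})(y) = x$, matching $M$ — consistent. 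After that the argument is purely formal. I would also remark, echoing the comment after Lemma~\ref{lem:convexhull}, that this computation never requires writing out the full Laurent polynomial $\beta^n$: it suffices to track the three vertex monomials, which is the whole point of using Newton polytopes here.
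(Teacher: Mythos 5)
Your proof is correct and follows essentially the same route as the paper: reduce to the model matrix of Lemma~\ref{lem:convexhull} via Lemma~\ref{lem:thirdvariable} (your $u=y^{-1}$ is the paper's $u=y$ read in the opposite direction), then substitute $\tilde x=xy^{-1}$, $\tilde y=z^{-2}$; your extra remark that the lattice map $(a,b)\mapsto(a,-a,-2b)$ is linear (and injective, so no cancellation over $\Z_2$) just makes explicit a step the paper leaves implicit.
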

\begin{proof}
By Lemma~\ref{lem:thirdvariable} applied to $u=y$, it suffices to consider instead the matrix $$\begin{pmatrix}
xy^{-1}+z^{-2}& 1\\
xy^{-1} &1
\end{pmatrix}.$$
Let $\tilde{x}=xy^{-1}$ and $\tilde{y}=z^{-2}$. By Lemma~\ref{lem:convexhull}, the upper left entry $\beta^n(x,y,z)$ of the $n$th power of this matrix is equal to $\alpha^n(xy^{-1}, z^{-2})$. Lemma~\ref{lem:convexhull} again implies that $\beta^n(x,y,z)$  must therefore be the convex hull of the three monomials $z^{-2n}$, $xy^{-1}$ and $x^ny^{-n}.$
\end{proof}

\begin{lemma}\label{lemlambda1}
For $n\geq 2$, the upper left entry of the $n^{th}$ power of the matrix $$\begin{pmatrix}z^{-2}+yx^{-1}z^{-1}&x^{-1}z^{-1}\\
    y&1\end{pmatrix}\in M_2(\Z_2[x^{\pm1}, y^{\pm1}, z^{\pm1}])$$ is a Laurent polynomial $\gamma^n(x,y,z)\in \Z_2[x^{\pm1}, y^{\pm1}, z^{\pm1}]$ containing terms $yx^{-1}z^{-1}, y^nx^{-n}z^{-n}$, and $z^{-2n}$, with coefficient one. In addition, the Newton polytope associated to $\gamma^n(x,y,z)$ is the convex hull of these three monomials.
\end{lemma}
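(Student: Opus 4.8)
The plan is to reduce Lemma~\ref{lemlambda1} to Lemma~\ref{lem:convexhull} in exactly the same way Lemma~\ref{lemlambda11} was reduced, namely by a conjugation that clears a unit from the off-diagonal entries and then a monomial substitution. First I would apply Lemma~\ref{lem:thirdvariable} with the unit $u = x^{-1}z^{-1}$: the matrix
$$\begin{pmatrix}z^{-2}+yx^{-1}z^{-1}&x^{-1}z^{-1}\\
y&1\end{pmatrix}$$
has off-diagonal entries $b_1 = x^{-1}z^{-1}$ and $c_1 = y$; writing $b_1 = (1)\cdot u$ and $c_1 = (yx^{-1}z^{-1})\cdot u^{-1}$ with $u = x^{-1}z^{-1}$, Lemma~\ref{lem:thirdvariable} tells me that the diagonal entries of the $n$th power — in particular the upper left entry $\gamma^n(x,y,z)$ — coincide with those of the conjugated matrix
$$\begin{pmatrix}z^{-2}+yx^{-1}z^{-1}& 1\\
yx^{-1}z^{-1} &1\end{pmatrix}.$$

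Next I set $\tilde x := yx^{-1}z^{-1}$ and $\tilde y := z^{-2}$, so that the conjugated matrix is literally the matrix $M$ of Lemma~\ref{lem:convexhull} in the variables $\tilde x,\tilde y$. Therefore $\gamma^n(x,y,z) = \alpha^n(yx^{-1}z^{-1}, z^{-2})$, and by Lemma~\ref{lem:convexhull} the polynomial $\alpha^n(\tilde x,\tilde y)$ contains the monomials $\tilde x$, $\tilde x^n$, $\tilde y^n$ with coefficient one, which substitute back to $yx^{-1}z^{-1}$, $y^n x^{-n} z^{-n}$, and $z^{-2n}$ respectively. This yields the first assertion of the lemma.

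For the Newton polytope assertion, I would argue that the monomial substitution $\tilde x\mapsto yx^{-1}z^{-1}$, $\tilde y \mapsto z^{-2}$ corresponds to the injective lattice-affine map $\R^2_{\langle \tilde x,\tilde y\rangle}\to \R^3_{\langle x,y,z\rangle}$ sending $(a,b)\mapsto (-a,\,a,\,-a-2b)$, which carries the Newton polytope of $\alpha^n$ onto the Newton polytope of $\gamma^n$. Since Lemma~\ref{lem:convexhull} identifies $N(\alpha^n)$ with the convex hull of $\tilde x,\tilde x^n,\tilde y^n$, applying the map shows $N(\gamma^n)$ is the convex hull of the three images $yx^{-1}z^{-1}$, $y^nx^{-n}z^{-n}$, $z^{-2n}$, and contains no other lattice points of $N(\gamma^n)$ beyond the image of $N(\alpha^n)$ because a Laurent polynomial identity is preserved monomial-by-monomial under a monomial change of variables. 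This is essentially a verbatim repetition of the proof of Lemma~\ref{lemlambda11}, so the write-up can be kept to a few lines.

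I do not expect a genuine obstacle here: the only point requiring a moment's care is choosing the correct unit $u$ so that Lemma~\ref{lem:thirdvariable} applies (one must check that $b_1 u^{-1}$ and $c_1 u$ are the entries of the target matrix, not $b_1 u$ and $c_1 u^{-1}$ — i.e. getting the direction of conjugation right), and then confirming that the resulting substitution $(\tilde x,\tilde y) = (yx^{-1}z^{-1}, z^{-2})$ indeed reproduces the three claimed monomials. Both are routine once set up correctly.
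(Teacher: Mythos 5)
Your proposal is correct and follows essentially the same route as the paper: conjugate away the off-diagonal unit via Lemma~\ref{lem:thirdvariable} (your choice $u=x^{-1}z^{-1}$ is just the inverse of the paper's $u=xz$, i.e.\ the same conjugation read in the other direction), substitute $\tilde x=yx^{-1}z^{-1}$, $\tilde y=z^{-2}$, and invoke Lemma~\ref{lem:convexhull}, with the injective monomial (lattice-linear) substitution carrying $N(\alpha^n)$ onto $N(\gamma^n)$ without cancellation. No gaps; this matches the paper's proof.
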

\begin{proof}
By Lemma~\ref{lem:thirdvariable} applied to $u=xz$, we can consider instead the matrix $$\begin{pmatrix}
z^{-2}+yx^{-1}z^{-1}&1\\
    yx^{-1}z^{-1}&1
\end{pmatrix}.$$
Let $\tilde{x}=yx^{-1}z^{-1}$ and $\tilde{y}=z^{-2}$. By Lemma~\ref{lem:convexhull}, the upper left entry of the $n$th power of this matrix $\gamma^n(x,y,z)$ is equal to $\alpha^n(yx^{-1}z^{-1}, z^{-2})$, which is also the convex hull of the three monomials $yx^{-1}z^{-1}, y^nx^{-n}z^{-n}$ and $z^{-2n}$.
\end{proof}

%Since $\beta^n(x,y,z)=\alpha^n(yx^{-1}z^{-1},z^{-2})$ and $\gamma^n(x,y,z)$ are equal to $i(\alpha^n(\tilde{x},\tilde{y}))\subset \R^3_{\langle \tilde{x},\tilde{y},\tilde{z}\rangle}$ for $\tilde{x} \neq c\tilde{y}$ where $c\in \R$. Therefore, one can easily arrive at the following corollary by slightly modifying the proof of Corollary~\ref{cor:alpha}.

In the same manner that Lemma \ref{lem:convexhull} implies Lemmas \ref{lemlambda11} and \ref{lemlambda1}, Corollary \ref{cor:alpha} implies:

\begin{cor}\label{cor:beta}
Let $i:\R^2_{\langle x,y\rangle}\lr \R^d$ be an embedding of the form $i(x,y)=(x,y,\iota(x,y))$, where $\iota(x,y):\R^2\lr\R^{d-2}$ is a linear map, $d\geq3$.
Let $n,m\in \N$ be two distinct numbers. Then there does not exist a change of coordinates $\phi \in \GL(d,\Z)$ such that $$\phi^*(i( \beta^n(x,y,z)))=i(\beta^m(x,y,z))\quad\mbox{ or } \quad\phi^*(i( \gamma^n(x,y,z)))=i(\gamma^m(x,y,z)).$$ \end{cor}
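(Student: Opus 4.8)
The plan is to derive Corollary~\ref{cor:beta} from Corollary~\ref{cor:alpha} in exactly the way Lemmas~\ref{lemlambda11} and~\ref{lemlambda1} were derived from Lemma~\ref{lem:convexhull}, namely by a monomial change of variables that identifies $\beta^n$ and $\gamma^n$ with pullbacks of $\alpha^n$. First I would recall, from the proofs of Lemmas~\ref{lemlambda11} and~\ref{lemlambda1}, that $\beta^n(x,y,z)=\alpha^n(xy^{-1},z^{-2})$ and $\gamma^n(x,y,z)=\alpha^n(yx^{-1}z^{-1},z^{-2})$ as Laurent polynomials in $\Z_2[x^{\pm1},y^{\pm1},z^{\pm1}]$. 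In particular, if $\Phi\in\GL(3,\Z)$ denotes the lattice automorphism sending the coordinate monomials $(x,y,z)$ to $(\tilde x,\tilde y,z):=(xy^{-1},z^{-2},z)$ — which is invertible over $\Z$, since its matrix on exponent vectors is $\begin{pmatrix}1&0&0\\-1&0&0\\0&-2&1\end{pmatrix}$, wait: that matrix is singular, so I must be slightly more careful and instead use the full-rank substitution $(x,y,z)\mapsto(xy^{-1},z^{-2}y^{?},\dots)$. Let me restate: what is true is that $\beta^n$ and $\gamma^n$ lie in the sublattice of $\R^3$ spanned by the two exponent vectors of $\tilde x$ and $\tilde y$ together with the direction which $\alpha^n$ does not involve, and that the map sending this plane to the $\langle\tilde x,\tilde y\rangle$-plane is the restriction of a $\GL(3,\Z)$ transformation.

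The cleanest formulation is therefore: the Newton polytope $N(\beta^n)\subset\R^3$ is the image, under a fixed lattice embedding $j_\beta:\R^2_{\langle\tilde x,\tilde y\rangle}\hookrightarrow\R^3$ of the form required by Corollary~\ref{cor:alpha} (linear, graph-like after reordering coordinates), of the polytope $N(\alpha^n)\subset\R^2$; and likewise $N(\gamma^n)$ is the image of $N(\alpha^n)$ under another such embedding $j_\gamma$. Indeed, Lemma~\ref{lemlambda11} tells us $N(\beta^n)=\mathrm{Conv}\{z^{-2n},xy^{-1},x^ny^{-n}\}$, whose three vertices are precisely the images of $(0,n),(1,0),(n,0)$ under the linear map $(\tilde x,\tilde y)\mapsto\tilde x\cdot(1,-1,0)+\tilde y\cdot(0,0,-2)$ sending $\tilde x\mapsto xy^{-1}$ and $\tilde y\mapsto z^{-2}$; this map is injective, and after composing with a coordinate permutation it has the graph form $i(\tilde x,\tilde y)=(\tilde x,\tilde y,\iota(\tilde x,\tilde y))$ demanded in the hypotheses of Corollary~\ref{cor:alpha}. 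The analogous check for $\gamma^n$ uses the map $\tilde x\mapsto yx^{-1}z^{-1}$, $\tilde y\mapsto z^{-2}$, i.e.\ $(\tilde x,\tilde y)\mapsto\tilde x\cdot(-1,1,-1)+\tilde y\cdot(0,0,-2)$, again injective and graph-like after reindexing.

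With this in hand the argument is immediate. Suppose $\phi\in\GL(d,\Z)$ with $\phi^*(i(\beta^n))=i(\beta^m)$, where $i$ is the embedding in the statement of Corollary~\ref{cor:beta}. Then $\phi$ carries $i(N(\beta^n))$ to $i(N(\beta^m))$, hence carries $i\circ j_\beta(N(\alpha^n))$ to $i\circ j_\beta(N(\alpha^m))$. Now $i\circ j_\beta:\R^2\hookrightarrow\R^d$ is again a linear embedding of graph type in suitable coordinates (a composition of two such), so Corollary~\ref{cor:alpha} applies verbatim and forces $n=m$; the same reasoning with $j_\gamma$ in place of $j_\beta$ handles $\gamma$. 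Equivalently, and more concretely: by Lemma~\ref{lem:intpoints} the interior and boundary lattice-point counts of $i(N(\beta^n))$ in $\R^d$ are $\GL(d,\Z)$-invariants, and by Lemma~\ref{lemlambda11} plus Pick's theorem (exactly as in the proof of Corollary~\ref{cor:alpha}) these counts are $\tfrac12(n^2-3n)+1$ and $2n$ respectively, strictly monotone in $n$, so distinct $n\neq m$ give non-equivalent polytopes; identically for $\gamma^n$ via Lemma~\ref{lemlambda1}.

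I expect no genuine obstacle here — the statement is a packaging corollary. The one point requiring a moment's care, which I flagged above, is that the substitution $(x,y,z)\mapsto(\tilde x,\tilde y,z)$ used to pass from $\alpha^n$ to $\beta^n$ (resp.\ $\gamma^n$) is not itself a bijection on the full lattice $\Z^3$, so the correct viewpoint is not "apply a change of coordinates in $\R^3$" but rather "the Newton polytope of $\beta^n$ is the image of the $2$-dimensional polytope $N(\alpha^n)$ under a fixed linear lattice embedding $\R^2\hookrightarrow\R^3$ of the type covered by Corollary~\ref{cor:alpha}." Once that is said, the proof is one line per case, and I would simply write: "This follows from Corollary~\ref{cor:alpha} exactly as Lemmas~\ref{lemlambda11} and~\ref{lemlambda1} follow from Lemma~\ref{lem:convexhull}, using that $N(\beta^n)$ and $N(\gamma^n)$ are the images of $N(\alpha^n)$ under the linear embeddings $\tilde x\mapsto xy^{-1}$, $\tilde y\mapsto z^{-2}$ and $\tilde x\mapsto yx^{-1}z^{-1}$, $\tilde y\mapsto z^{-2}$ respectively, each of graph type in appropriate coordinates."
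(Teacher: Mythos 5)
Your reduction to Corollary~\ref{cor:alpha} does not work as stated, and this is exactly the point where the paper has to argue differently. The substitution underlying Lemma~\ref{lemlambda11} sends $\tilde y$ to $z^{-2}$, so on exponent lattices your map $j_\beta$ sends the $\tilde y$ basis vector to $(0,0,-2)$, which is not a primitive vector of $\Z^3$: the sublattice $\Z(1,-1,0)+\Z(0,0,-2)$ has index two in its saturation inside $\Z^3$. Consequently $i\circ j_\beta$ is \emph{not} of the graph form $(x,y)\mapsto(x,y,\iota(x,y))$ in any reordering of coordinates, Corollary~\ref{cor:alpha} does not apply verbatim, and, more to the point, lattice-point counts are not transported along $j_\beta$: the set $N(\beta^n)\cap\Z^3$ contains points that are not images of points of $N(\alpha^n)\cap\Z^2$. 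Concretely, for $n=2$ the polytope $N(\alpha^2)$ has no interior lattice points, while $N(\beta^2)$, which is unimodularly equivalent to the triangle with vertices $(1,0,0)$, $(2,0,0)$, $(0,0,4)$, has the interior lattice point $(1,0,1)$. So the numbers you quote, $\tfrac12(n^2-3n)+1$ interior and $2n$ boundary points, are the counts for $\alpha^n$, not for $\beta^n$ or $\gamma^n$; the same failure of primitivity occurs for $\gamma^n$, since $\Z(-1,1,-1)+\Z(0,0,-2)$ is again index two in its saturation.

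The paper therefore does not deduce this corollary from Corollary~\ref{cor:alpha}: it computes $N(\beta^n)$ and $N(\gamma^n)$ directly as triangles in $\R^3$, applies an explicit automorphism of $\Z[x^{\pm1},y^{\pm1},z^{\pm1}]$ (an element of $\GL(3,\Z)$ on exponents) carrying each to the triangle with vertices $(1,0,0)$, $(n,0,0)$, $(0,0,2n)$, and counts: $2n$ boundary points and area $n^2-n$, hence $n^2-n-1$ interior points by Pick's theorem, for a total of $n^2+n-1$ lattice points. This count is strictly monotone in $n$ and is a $\GL(d,\Z)$-invariant of $i(N(\beta^n))$ by Lemma~\ref{lem:intpoints}, the graph form of $i$ ensuring that the image lattice is saturated so that the count is unchanged under $i$. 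Your overall strategy of normalizing and comparing lattice-point counts is the right one and the statement itself is not in danger, but as written the key transfer step is false and the counts must be redone for $\beta^n$ and $\gamma^n$ themselves, exactly as the paper does.
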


\begin{proof}
By construction, $\beta^n(x,y,z)=\alpha^n(xy^{-1},z^{-2})$ and the Newton polytope of $\beta^n(x,y,z)$ is given by the convex hull of the points $(1,-1,0), (n,-n,0),$ and $(0,0,-2n)$ in $\R^3_{\langle x,y,z\rangle}$. Consider $\Phi\in\Aut(\Z[x^{\pm1},y^{\pm1},z^{\pm1}])$ such that
$$\Phi(x)=xy, \Phi(y)=y,\Phi(z)=z^{-1}.$$
Then, $\Phi(N(\beta^{n}))$ is the convex hull of the points $(1,0,0), (n,0,0),$ and $(0,0,2n)$ in $\R^3_{\langle x,y,z\rangle}$ and it is contained in the $\langle x,z\rangle$ plane. Note that $\Phi(N(\beta^{n}))$ has $2n$ boundary lattice points and area $n^2-n$; by Pick's Theorem, $\Phi(N(\beta^{n}))$ has $n^2-n-1$ interior lattice points, and therefore has $n^2+n-1$ lattice points in total. By Lemma~\ref{lem:intpoints}, the number of lattice points of a polytope in $\R^d$ is a $\GL(d,\Z)$ invariant and thus $i(N(\beta^n))$ is not unimodular equivalent to $i(N(\beta^m(x,y,z)))$ if $n\neq m$.

Similarly, by construction $\gamma^n(x,y,z)=\alpha^n(yx^{-1}z^{-1},z^{-2})$ and its Newton polytope is given by the convex hull of the points $(-1,1,-1), (-n,n,-n),$ and $(0,0,-2n)$ in $\R^3_{\langle x,y,z\rangle}$. Choose the automorphism $\Phi\in\Aut(\Z[x^{\pm1},y^{\pm1},z^{\pm1}])$ such that
$$\Phi(x)=x^{-1}yz, \Phi(y)=y,\Phi(z)=z^{-1}.$$
Then, $\Phi(N(\beta^{n})$ is again the convex hull of the points $(1,0,0), (n,0,0),$ and $(0,0,2n)$ in $\R^3_{\langle x,y,z\rangle}$. As above, $i(N(\beta^n))$ is not unimodular equivalent to $i(N(\beta^m(x,y,z)))$ in $\R^d$ if $n\neq m$.
\end{proof}

%%%%%%%%%%%%%%%%%%%%%%
%%%%%%%%%%%%%%%%%%%%%%
%%%%%%%%%%%%%%%%%%%%%%

\subsection{Proof of Theorem~\ref{thm:main}}\label{sec:proofmain}

Let $\Lambda(\beta_{11})$ denote the Legendrian link whose front projection is the $(-1)$-closure of the positive braid $(\sigma_2 \sigma_1 \sigma_3 \sigma_2)^4\sigma_1\sigma_3$. See Figure~\ref{fig:BraidIntroLoop} for a Lagrangian projection of $\Lambda(\beta_{11}).$ Similarly, $\Lambda_1$ denotes the $(-1)$-closure of the positive braid $(\sigma_2 \sigma_1 \sigma_1 \sigma_2)^3\sigma_1$. We now prove Theorem~\ref{thm:main} with the following strategy, the beginning of which aligns with \cite{CasalsNg}.

We first construct an exact, orientable, Lagrangian filling $L$ of $\Lambda \in \{ \Lambda(\beta_{11}), \Lambda_1\}$. By~\cite{ehk,CasalsNg}, $L$ induces a system of augmentations $\varepsilon_L:(\mathcal{A}(\Lambda; \Z_2), \partial_{\Lambda})\lr(\Z_2[H_1(L,\Z)],0)$. Then we consider a Legendrian loop $\vartheta: (S^1, pt) \rightarrow (\mathcal{L}(\Lambda), \Lambda)$, where $(\mathcal{L}(\Lambda), \Lambda)$ is the space of Legendrian links that are isotopic to $\Lambda$, with basepoint an arbitrary fixed representative of $\Lambda$. For both links we shall use the same Lagrangian projections and $\vartheta$-loops as in \cite[Section 4]{CasalsNg}. The exact Lagrangian concordance $N_\vartheta$ of $\Lambda$, given by the trace of $\vartheta$, induces a dga automorphism 
$\Phi: (\mathcal{A}(\Lambda; \Z_2), \partial_{\Lambda})\rightarrow (\mathcal{A}(\Lambda; \Z_2), \partial_{\Lambda})$ by invariance \cite{Chekanov}. 
The exact Lagrangian fillings $L^i$ of $\La$ given by composing $i$ copies of the concordance $N_\vartheta$ with the initial filling $L$ have corresponding augmentations $\e^i_L:=\varepsilon_L \circ \Phi^i$. The key step at this stage is distinguishing such systems of augmentations. Indeed, if these augmentations  $\e_L^i$, $i\in\N$, are pairwise distinct as systems of augmentations, the Lagrangians fillings $L^i$, $i\in\N$, are also pairwise distinct, up to compactly supported Hamiltonian isotopy. The proofs below will precisely show how to pairwise distinguish these systems of augmentations.

\begin{remark} For context, the crucial novelty of our approach is in pairwise distinguishing these augmentations $\e^i_L$, $i\in\N$. This had not been achieved over a ring of finite characteristic. We succeed at distinguishing them via studying Newton polytopes of augmented values of Reeb chords, augmented according to $\e^i_L$. As computations will show, this is a more direct and computable method than the solution by L.~Ng and the second author in \cite{CasalsNg}, which only worked over $\Z$.\hfill$\Box$\end{remark}

\begin{proof}[Proof of Theorem~\ref{thm:main}]
\textbf{Case 1: $\Lambda=\Lambda(\beta_{11})$}. Let us label the Reeb chords of $\Lambda(\beta_{11})$ by $a_1,a_2,\ldots,a_{16}$ for the crossings corresponding to the piece $(\sigma_2 \sigma_1 \sigma_3 \sigma_2)^4$ read right to left. That is, the rightmost $\sigma_2$ crossing is $a_1$, the next $\sigma_3$ crossing is $a_2$ and so on until the leftmost $\sigma_2$ crossings is $a_{16}$. The additional two crossings $\sigma_1\sigma_3$ on the right will be labeled $a_{19}$, for $\sigma_1$, and $a_{17}$, for $\sigma_3$. Even if this is an apparently odd labeling (with no $a_{18}$), this is the same notation as in Section $6.3$ in~\cite{CasalsNg} (where $a_{18}$ appeared elsewhere in that article) and it is helpful to keep this notation the same for our computations.

Consider the Lagrangian filling $L$ constructed by first pinching in order the following Reeb chords $a_9,a_{10},a_{11},a_{12},a_{13}$, and $a_{16}$ of $\Lambda(\beta_{11})$. After this pinching sequence, we have four max-tb unknots, so we compose the Lagrangian cobordism induced by the pinching sequence with four minimum cobordisms to construct the Lagrangian filling $L$. The four max-tb unknots have basepoints
$$s_{16}s_{10}, s_{16}^{-1}s_{11}^{-1}, s_{13}s_{12}s_{10}^{-1}s_{9},s_{13}^{-1}s_{12}^{-1}s_{11}s_{9}^{-1}.$$
The unique Lagrangian disk fillings of each these max-tb unknots induce an augmentation which maps these monomials to $1$. Therefore, we have the relations \begin{align*}
   s_{10}=s_{11}=s_{16}^{-1}=s_{13}s_{12}s_{9}.
\end{align*}

 We can identify $\Z_2[H_1(L;\Z)]\cong\Z_2[ s_9^{\pm1},s_{11}^{\pm1},s_{12}^{\pm1},s_{13}^{\pm1}]$ and the Lagrangian filling $L$ we have constructed induces the following augmentation  $\varepsilon_{L}$:
\begin{align*}
\varepsilon_{L}(a_{9})&=s_{9},~~~ \varepsilon_{L}(a_{11})=s_{11},\\ \varepsilon_{L}(a_{19})&=\frac{s_{9}s_{13}s_{12}^2}{s_{10}s_{11}^2s_{16}}+\frac{s_{13}s_{12}}{s_{11}s_{16}}+\frac{s_9}{s_{11}}\\
&=s_{12}s_{11}^{-1}+s_{11}s_{9}^{-1}+s_{11}^{-1}s_9.
\end{align*}
For $\varepsilon_{L}(a_{19})$, the first equality of
$$\frac{s_{9}s_{13}s_{12}^2}{s_{10}s_{11}^2s_{16}}=s_{12}s_{11}^{-1},\quad \frac{s_{13}s_{12}}{s_{11}s_{16}}=s_{11}s_{9}^{-1},$$
uses $s_{10}=s_{13}s_{12}s_{9}$ and $s_{11}=s_{16}^{-1}$. The second equality uses $s_{11}=s_{16}^{-1}$ and $s_{11}=s_{13}s_{12}s_{9}$. The computations of $\varepsilon_{L}(a_{9}),\varepsilon_{L}(a_{11})$ and $\varepsilon_{L}(a_{19})$ above can be performed directly iteratively using \cite[Subsection 4.2]{CasalsNg}, specifically \cite[Proposition 4.8]{CasalsNg}. These are similar to the computations performed in \cite[Section 6]{CasalsNg}, which closely follows the $\Z_2$-computations in \cite[Section 6.5]{ehk} and \cite[Section 3A]{Pan}. Alternatively, these computations can be performed with the same Mathematica package used in \cite{CasalsNg}, which is available in the website of the second author. Since we work in characteristic 2, we do not need to be concerned with spin structures, which computationally translated into additional basepoints in \cite{CasalsNg}.

Let $\vartheta$ be the Legendrian loop schematized in Figure~\ref{fig:BraidIntroLoop}. By \cite[Prop. 5.3]{CasalsNg}, this loop $\vartheta$ induces a dga automorphism $\mathcal{A}(\vartheta)$ which acts on the Reeb chords $a_{11}$ and $a_9$ by

$$ \mathcal{A}(\vartheta)\begin{pmatrix} a_{11}\\
a_9\end{pmatrix}= \begin{pmatrix} 0 & 1\\
1 &a_{19}\end{pmatrix} \begin{pmatrix} a_{11}\\
a_9\end{pmatrix}.$$

Therefore, the fillings $L^n$ of $\Lambda(\beta_{11})$ -- given by concatenating $n$ copies of the concordance $N_\vartheta$ and the filling $L$ -- induce the augmentations
$$\e_L^n:=\e_L\circ\mathcal{A}(\vartheta)^n:(\mathcal{A}(\Lambda; \Z_2), \partial_{\Lambda})\lr(\Z_2[H_1(L,\Z)],0).$$
In particular, 
\begin{align*}
    \begin{pmatrix} \e^n_{L}(a_{11})\\
\e^n_{L}(a_9) \end{pmatrix}=M^n  \begin{pmatrix} \e_L(a_{11})\\
\e_{L}(a_9)\end{pmatrix}:= \begin{pmatrix} 0 & 1\\
1 &\e_{L}(a_{19})\end{pmatrix}^n  \begin{pmatrix} \e_{L}(a_{11})\\
\e_{L}(a_9)\end{pmatrix}.
\end{align*}
where $M$ is the matrix obtained as the result of applying the augmentation $\e_L$ to the entries of the matrix
$$\begin{pmatrix} 0 & 1\\
1 &a_{19}\end{pmatrix},\quad\mbox{so that }M=\begin{pmatrix} 0 & 1\\
1 &\e_{L}(a_{19})\end{pmatrix}=
\begin{pmatrix} 0 & 1\\
1 &s_{12}s_{11}^{-1}+s_{11}s_{9}^{-1}+s_{11}^{-1}s_9
\end{pmatrix}.$$
Note that the matrix on the left is the matrix describing how $\mathcal{A}(\vartheta)$ acts on $\begin{pmatrix} a_{11}\\
a_9\end{pmatrix}$.

Let $M_1:=N^{-1}MN$ where $N:=\begin{pmatrix} s_{11} & 1\\
s_9 &0\end{pmatrix}$. Then $
      \begin{pmatrix} \e^n_{L}(a_{11})\\
\e^n_{L}(a_9) \end{pmatrix}= NM_1^n\begin{pmatrix} 1\\0\end{pmatrix},$ where 
\begin{align*}
    M_1&=\begin{pmatrix} s_{11}s_{9}^{-1}+\e_L(a_{19})&s_9^{-1}\\ s_9^{-1}s_{11}^2+s_9+s_{11}\e_L(a_{19}) &s_9^{-1}s_{11} \end{pmatrix}
   =s_{9}^{-1}s_{11} \begin{pmatrix}
   s_{12} s_{9}s_{11}^{-2}+s_9^2s_{11}^{-2} & s_{11}^{-1}\\
    s_{12}s_9s_{11}^{-1} & 1
    \end{pmatrix}.
\end{align*}

Let us set
$$x:=s_{12}s_9s_{11}^{-1},\quad y:=s_{11},\quad z:=s_{9}^{-1}s_{11}.$$
By Lemma~\ref{lemlambda11}, the upper left entry of $M_1^n$ is $z^n\beta^n(x,y,z)\in \Z_2[x^{\pm1}, y^{\pm1}, z^{\pm1}]$. Therefore
$$\e^n_L(a_{9})=s_{9}z^n\beta^n(x,y,z)=(yz^{n-1})\cdot\beta^n(x,y,z),$$
whose Newton polytope is a translate of the Newton polytope for $\beta^n(x,y,z)$. By Corollary~\ref{cor:beta}, $i(N(\beta^n(x,y,z)))$ and $i(N(\beta^m(x,y,z)))$ are not unimodular equivalent as lattice polytopes in $\R^4_{\langle x,y,z, s_{13} \rangle}$ if $n\neq m$. In particular, the Newton polytopes of $\e^{n}_L(a_9)$ and $\e^{m}_L(a_9)$ are also not unimodular equivalent $\R^{4}_{\langle x,y,z, s_{13}\rangle}$. Since $\partial_{\Lambda}a_9=0$, then $\e^{n}_L(a_9)$ and $\e^{m}_L(a_9)$ are not dga homotopic augmentations if $n\neq m$. Therefore, the $\vartheta$-orbit of $\e_L$ is entire over characteristic 2, as we wanted to show.

%Note that there exists $\phi\in \GL(4,\Z)$ such that $\phi(s_{11})=s_{11}$, $\phi(s_9)=s_9s_{11}^{-1}$, $\phi(s_{12})=s_{12}s_9s_{11}^{-1}$, and $\phi(s_{13})=s_{13}$. Let $x=s_{12}s_9s_{11}^{-1}$, $y=s_{11}$, and $z=s_{11}^{-1}s_9$.  By Lemma~\ref{lemlambda11}, we have that the upper left entry of $M_1^n$ is given by $z\beta^n(x,y,z)\in \Z_2[x^{\pm1}, y^{\pm1}, z^{\pm1}]$. So $\e^n_L(a_{9})=s_{9}z^{-1}\alpha^n(x,y,z)=s_{11}\alpha^n(x,y,z)=y\alpha^n(x,y,z)$. By Corollary~\ref{cor:beta}, we know that for $n,m\in \N$ such that $n\neq m$, the Newton polytopes associated to $\beta^n$ and $\beta^m$ are not unimodular equivalent as lattice polytopes in $\R^4_{\langle x,y,z, s_{13} \rangle}$. Since the Newton polytope of $y\beta^n(x,y,z)$ is a translation of the Newton polytope of $\beta^n(x,y,z)$, we know that the Newton polytopes of $\e^{n}_L(a_9)=y\beta^n(x,z,y)$ and $\e^{m}_L(a_9)=y\beta^m(x,y,z)$ are also not unimodular equivalent $\R^{4}_{\langle x,y,z, s_{13}\rangle}$. Since $\partial_{\Lambda}a_9=0$, then $\e^{n}_L(a_9)$ and $\e^{m}_L(a_9)$ are not dga homotopic augmentations for $n\neq m$. Therefore, the $\vartheta$-orbit of $\e_L$ is entire over characteristic 2.

\textbf{Case 2: $\Lambda=\Lambda_1$}. Let the Reeb chords of $\Lambda_1$ corresponding to the crossings in the positive braid $(\sigma_2 \sigma_1 \sigma_1 \sigma_2)^3\sigma_1$ be labelled $a_1, \ldots, a_{13}$ from left to right. Consider the exact Lagrangian filling $L$ of $\Lambda_1$ given by pinching the Reeb chords $a_{9},a_{13},a_{10},a_{12},a_{4},a_1$, in this order. After performing this pinching sequence, there are three max-tb unknots with basepoints 
 $(s_{10}s_{11}s_{13})^{-1}$, $s_{10}s_{11}s_{13}(s_1s_4s_9s_{12})^{-1}$, and $s_1s_4s_9s_{12}.$ 
Therefore, we have the relations $s_{10}s_{11}s_{13}=1,$ and $s_1s_4s_9s_{12}=1$. We also identify $\Z_2[H_1(L;\Z)] \cong \Z_2[s_4^{\pm 1}, s_9^{\pm1}, s_{10}^{\pm1},s_{12}^{\pm1}, s_{13}^{\pm1}]$.

Similary to the case of $\La(\beta_{11})$ above, e.g.~ using the mathematica package from~\cite{CasalsNg}, we find the following $\e_L$-augmented values:
    \begin{align*}
\e_L(a_{9})&=s_9,\\
\e_L(a_{10})&=s_{10}+s_{10}^2s_{11}^2s_{13}(s_1s_4s_{12})^{-1}=s_{10}+s_9s_{13}^{-1},\\
\e_L(a_{13})&=s_{10}s_9^{-1}+s_{13}+s^2_{10}s_{11}^2s_{13}(s_1s_4s_9s_{12})^{-1}=s_{10}s_9^{-1}+s_{13}+s_{13}^{-1}.
    \end{align*}

Let $\vartheta=\vartheta_2$ denote the Legendrian $\vartheta$-loop of $\Lambda_1$ that takes the crossing $a_{13}$ -- the purple box is $a_{13}$ in the language of \cite{CasalsNg} -- and pushes it right to left across the entire links. By \cite[Prop 5.3]{CasalsNg}, its monodromy map $\mathcal{A}(\vartheta_2)$ acts on the Reeb chords $a_{9}$ and $a_{10}$ by:
\begin{align*}
   \mathcal{A}(\vartheta_2) \begin{pmatrix}
  a_{10}\\
 a_{9}
    \end{pmatrix}&= \begin{pmatrix}
    a_{13} &1\\
    1 & 0
    \end{pmatrix}\begin{pmatrix}   a_{10}\\
     a_{9}\end{pmatrix}.
     \end{align*}

The fillings $L^n$ of $\Lambda_1$, given by concatenating $n$ copies of the concordance $N_{\vartheta_2}$ and the filling $L$, induce augmentations
$$\e_L^n:=\e_L\circ\mathcal{A}(\vartheta_2)^n:(\mathcal{A}(\Lambda; \Z_2), \partial_{\Lambda})\lr(\Z_2[H_1(L,\Z)],0).$$
The augmented values for $a_{9}$ and $a_{10}$ are
\begin{align*}
    \begin{pmatrix} \e^n_L(a_{10})\\\e^n_L(a_{9})\end{pmatrix}=\begin{pmatrix}    \e_L(a_{13})&1\\
    1 & 0\end{pmatrix}^n\begin{pmatrix}   \e_L(a_{10})\\
    \e_L(a_{9})\end{pmatrix}.
\end{align*}

Let $N:=\begin{pmatrix}\e_L(a_{10})&1\\
\e_L(a_9)&0
\end{pmatrix}=\begin{pmatrix}s_{10}+s_9s_{13}^{-1}&1\\s_9&0
 \end{pmatrix}$, so that $N^{-1}=\begin{pmatrix}0&s_9^{-1}\\
 1&s_{13}^{-1}+s_{10}s_{9}^{-1}\end{pmatrix}.$
As before, set $M_1:=N^{-1}MN$ so as to have the equality $\begin{pmatrix} \e^n_L(a_{10})\\ \e^n_L(a_{9})
 \end{pmatrix}= NM_1^n \begin{pmatrix} 1\\0 \end{pmatrix}$.
Then
\begin{align*}
M_1=N^{-1}\begin{pmatrix}s_{10}s_9^{-1}+s_{13}+s_{13}^{-1} &1\\
1&0\end{pmatrix}N&=\begin{pmatrix} s_{13}^{-1}+s_{10}s_9^{-1}&s_9^{-1}\\
s_{10}s_{13}&s_{13}
\end{pmatrix}\\
&=s_{13}\begin{pmatrix} s_{13}^{-2}+s_{10}s_9^{-1}s_{13}^{-1}&s_9^{-1}s_{13}^{-1}\\
s_{10}&1\end{pmatrix}.
\end{align*}

Let us set $s_{9}=x$, $s_{10}=y,$ and $s_{13}=z$, so that the above matrix becomes
\begin{align*}
    M_1&:=z\begin{pmatrix} z^{-2}+yx^{-1}z^{-1}&x^{-1}z^{-1}\\
    y&1
    \end{pmatrix}.
\end{align*}
By Lemma~\ref{lemlambda1}, the upper left entry of $M_1^n$ is the Laurent polynomial $z^n\gamma^n(x,y,z)\in \Z_2[x^{\pm1}, y^{\pm1},z^{\pm1}].$  In addition,
$\e^n_L(a_{9})=\e_L(a_{9})z^n\gamma^n(x,y,z)=xz^n\gamma^{n}(x,y,z)$ and thus the Newton polytope $N(\e^n_L, a_{9})$ is a translate of the Newton polytope $N(\gamma^n(x,y,z))$. By Corollary~\ref{cor:beta}, the Newton polytopes of $i(N(\gamma^n(x,y,z)))$ and $i(N(\gamma^m(x,y,z)))$ are not unimodular equivalent in $\R^5_{\langle x,y,z, s_4,s_{12}\rangle}$ if $n\neq m$. Therefore, the Newton polytopes $N(\e^n_L, a_{9})$ and $N(\e^m_L, a_{9})$ are not unimodular equivalent in $\R^5_{\langle x,y,z, s_4,s_{12}\rangle}$ if $n\neq m$. Since $\partial_{\Lambda}a_9=0$, then $\e^{n}_L(a_9)$ and $\e^{m}_L(a_9)$ are not dga homotopic augmentations if $n\neq m$. Thus, the $\vartheta$-orbit of $\Lambda_1$ is entire over characteristic 2 as required.
\end{proof}

\begin{remark}~\label{rem:count_monomials}
For either $\Lambda(\beta_{11})$ or $\Lambda_1$, the number of monomials of $\e^n_L(a_9)$ is equal to the number of monomials of $\alpha^n$, where the count is modulo 2. Now, the number of monomials in $\alpha^n(\tilde{x},\tilde{y})\in \Z_2[\tilde{x}^{\pm1}, \tilde{y}^{\pm1}]$ is not an increasing, or non-decreasing sequence, nor is it true that for $n\neq m$ the number of monomials of $\alpha^n(\tilde{x},\tilde{y})$ is distinct from the number of monomials of $\alpha^m(\tilde{x},\tilde{y})$. Indeed for both $n=7$ and $n=10$, the number of $\alpha^n(\tilde{x},\tilde{y})$ is equal to $14$. The sequence of the number of monomials of $\alpha^n(\tilde{x},\tilde{y})$,  modulo $2$, for $n\leq 20$, is:
\begin{align*}2, 3, 5, 6, 7, 9, 14, 15, 13, 14, 19, 21, 22, 27, 41, 42, 31, 29, 34,
35, 33,\\ 38, 55, 57, 46, 47, 61, 66, 67, 81, 122, 123, 85, 74, 79, 77, 
66, 71, 97, 98.\end{align*}
In this case, Newton polytopes are a stronger $\GL(k,\Z)$-invariant of augmentations over characteristic 2 than the count of monomials. With the several computations we have performed, working for this article and previous work \cite{CasalsNg}, we have found this to be, at least heuristically, a general pattern. It is also the case that, also in general, computing lattice points of Newton polytopes is a much simpler process than counting monomials modulo $2$.\hfill$\Box$
\end{remark}

%%%%%%%%%%%%%%%%%%%%%%%%%%%
%%%%%%%%%%%%%%%%%%%%%%%%%%%
%%%%%%%%%%%%%%%%%%%%%%%%%%%

\subsection{Exact Lagrangian cobordisms}\label{sec:cobordisms}
Let us now prove Theorem~\ref{thm:inf}.

\begin{prop}\label{cor:cobordism_decomp}
Let $\Lambda_{\pm} \subset (\R^3, \xi_{std})$ be two Legendrian links such that there exists a decomposable Lagrangian cobordism $L$ from $\Lambda_-$ to $\Lambda_+$, both in the symplectization of $(\R^3, \xi_{std})$. Suppose that $\La_\pm$ have no $(-1)$-degree Reeb chords. In addition, suppose that there exists a Newton infinite element $\rho_-\in\SA(\La_-)$ such that there is a collection of Newton polytopes $N(\e,a)$ witnessing the Newton infinite property each with a different number of lattice points.

Then $\Lambda_+$ is $\Z_2[\cH\oplus\Z]$ aug-infinite, where $\cH\simeq H_1(L_-,\Z)$ for some Lagrangian filling $L_-$ of $\La_-$.
\end{prop}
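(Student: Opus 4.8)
The plan is to run the same argument used in the proof of Theorem~\ref{thm:main}, but now carefully tracking what the decomposable cobordism $L$ does to augmentations. First I would recall that, since $\La_-$ admits a Newton infinite element $\rho_-$ and $\La_-$ has no $(-1)$-degree Reeb chords, by Remark~\ref{rem:newtoninfinite} the link $\La_-$ is $\Z_2[\cH]$ aug-infinite for some lattice $\cH\simeq H_1(L_-,\Z)$, witnessed by a countable family $\{\e_i\}_{i\in\N}$ of $k$-systems of augmentations induced by Lagrangian fillings $L_-^i$ of $\La_-$, and (using the hypothesis that the witnessing polytopes $N(\e_i,\rho_-)$ all have pairwise distinct numbers of lattice points) these are pairwise non-equivalent in a way detected by the $\GL(k,\Z)$-invariant ``number of lattice points''.

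Next I would concatenate the cobordism $L$ with each filling $L_-^i$ to produce exact Lagrangian fillings $L_+^i$ of $\La_+$, with $H_1(L_+^i,\Z)\simeq \cH\oplus\Z^{m}$, where $m$ is the number of new homology classes (equivalently, new basepoint variables $s_{k+1}^{\pm1},\dots,s_{k+m}^{\pm1}$) introduced by the elementary pieces of $L$. Each $L_+^i$ induces a $(k+m)$-system of augmentations $\e_{L_+^i} = \Phi_L\circ \e_i$, where $\Phi_L:(\SA(\La_+;\Z_2[H_1(L_+^i;\Z)]),\dd_{\La_+})\to(\SA(\La_-;\Z_2[H_1(L_+^i;\Z)]),\dd_{\La_-})$ is the dga map induced by $L$; here I promote the coefficients of $\e_i$ along the inclusion $\Z_2[\cH]\hookrightarrow\Z_2[\cH\oplus\Z^m]$. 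The key point to isolate is that $\rho_-$, being Newton infinite and a cycle (as $\La_-$ has no $(-1)$-chords), must equal $\Phi_L(x)$ for some element $x\in\SA(\La_+)$: indeed, since $\La_+$ has no $(-1)$-degree Reeb chords, the dga of $\La_+$ has zero differential in degrees $\le 0$ once we restrict to cycles, and the decomposable cobordism map $\Phi_L$ is known to be surjective on degree-$0$ Reeb chords (this is the content of the explicit combinatorial formulas for $\Phi_L$ in~\cite[Section 4]{CasalsNg}, where each pinch move sends the pinched chord to a unit and each other chord to itself plus lower-order corrections; composing these elementary surjections gives a surjection). So pick $x$ with $\Phi_L(x)=\rho_-$; since $\La_+$ has no $(-1)$-chords, $x$ is automatically a cycle.

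Now I would compare Newton polytopes. Let $\pi:\R^{k+m}_{\langle s_1,\dots,s_{k+m}\rangle}\to\R^k_{\langle s_1,\dots,s_k\rangle}$ be the coordinate projection setting $s_{k+1}=\dots=s_{k+m}=1$; then $\pi(N(\e_{L_+^i},x)) = N(\e_i,\rho_-)$, because $\pi$ corresponds to the ring map $\Z_2[\cH\oplus\Z^m]\to\Z_2[\cH]$ killing the new variables, under which $\e_{L_+^i}(x)=(\Phi_L\circ\e_i)(x)$ maps to $\e_i(\Phi_L(x))=\e_i(\rho_-)$. Because $m$ is the same for every $i$ (the cobordism $L$ is fixed) and its new lattice summand is a pure $\Z^m$ with a distinguished set of linear spanning vectors, the projection $\pi$ is in fact the quotient by a rank-$m$ direct summand; so any $\GL(k+m,\Z)$-equivalence between $N(\e_{L_+^i},x)$ and $N(\e_{L_+^j},x)$ which we wished to rule out can be composed with $\pi$ only if it preserves that summand, which is not guaranteed --- so instead I would argue directly with the lattice-point count. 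The cleanest route: observe that $N(\e_{L_+^i},x)$ surjects onto $N(\e_i,\rho_-)$ under $\pi$, hence for each of the (finitely many, by hypothesis, all distinct) lattice-point values $N_i:=\#(N(\e_i,\rho_-)\cap\Z^k)$ the polytope $N(\e_{L_+^i},x)$ contains at least $N_i$ lattice points in distinct fibers of $\pi$; but more to the point, $L$ being fixed means there is a bound, uniform in $i$, on the ``thickness'' of $N(\e_{L_+^i},x)$ in the new directions only if... here I expect the cleanest formulation is to note that $m=1$ in all our intended applications and then $\pi$ is quotient by a rank-one summand, so $\#(N(\e_{L_+^i},x)\cap\Z^{k+1})\ge \#(N(\e_i,\rho_-)\cap\Z^k)=N_i$ are pairwise distinct provided the fiber thickness is eventually monotone --- which we can arrange by passing to a subsequence. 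Hence after passing to a subsequence the $(k+1)$-polytopes $N(\e_{L_+^i},x)$ have pairwise distinct lattice-point counts, so are pairwise non-equivalent under $\GL(k+1,\Z)$, so the augmentations $\e_{L_+^i}$ are pairwise non-dga-homotopic (using Lemma~\ref{lem:cycle} and that $x$ is a degree-$0$ cycle), and therefore $\La_+$ is $\Z_2[\cH\oplus\Z]$ aug-infinite. \textbf{The main obstacle} is precisely this last projection argument: relating a $\GL(k+1,\Z)$-invariant of $N(\e_{L_+^i},x)$ to the $\GL(k,\Z)$-invariant of its projection $N(\e_i,\rho_-)$. The subtle point is that an abstract unimodular equivalence upstairs need not descend, so I would either (i) invoke the hypothesis that the witnessing polytopes downstairs have \emph{all distinct} lattice-point counts together with a monotonicity/subsequence argument bounding the extra contribution from the single new coordinate, or (ii) argue that since $L$ is a \emph{fixed} decomposable cobordism, the ``new'' part of $N(\e_{L_+^i},x)$ is controlled (lies in a fixed-width slab) so that $\#(N(\e_{L_+^i},x)\cap\Z^{k+1})$ is, up to a bounded multiplicative factor, comparable to $N_i$, hence unbounded and — after a subsequence — strictly increasing, giving pairwise inequivalence. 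Either way, the content is in showing the Newton-infinite property of $\rho_-$ downstairs is inherited by $x$ upstairs through $\Phi_L$, which is exactly the cobordism analogue of the computations in Section~\ref{sec:proofmain}.
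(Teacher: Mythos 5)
Your overall route is the same as the paper's: concatenate the fixed decomposable cobordism with an infinite family of fillings of $\La_-$, use the explicit pinch-move formulas (triangularity with respect to the height filtration) to get surjectivity of the induced dga map and an element $x$ (the paper's $\rho_+$) mapping to $\rho_-$, and then try to distinguish the resulting augmentations of $\La_+$ by lattice-point counts of $N(\e_{L_+^i},x)$, invoking Lemma~\ref{lem:cycle}. However, the step you yourself flag as the main obstacle is where your argument has a genuine gap. From $N(\e_i,\rho_-)\subseteq\pi\bigl(N(\e_{L_+^i},x)\bigr)$ you conclude that $N(\e_{L_+^i},x)$ ``contains at least $N_i$ lattice points in distinct fibers of $\pi$.'' This inference is false in general: lattice points of the projection of a lattice polytope need not lift to lattice points of the polytope. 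For instance, the segment in $\R^2$ with vertices $(0,0)$ and $(2,1)$ has exactly two lattice points, while its projection to the first coordinate is $[0,2]$ with three; so a large lattice-point count downstairs gives no lower bound upstairs. Your fallback suggestions (monotone ``fiber thickness'' after a subsequence, or a ``fixed-width slab'' giving comparability up to a bounded factor) are not proved and do not by themselves supply the needed bound. A smaller issue: your claimed equality $\pi(N(\e_{L_+^i},x))=N(\e_i,\rho_-)$ can fail, since setting the new variable equal to $1$ over $\Z_2$ may cancel monomials; only the containment $\supseteq$ is automatic (and is all you need, were the lifting step valid).

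The paper closes exactly this gap by using the explicit form of the pinch-move map not only for surjectivity but for the \emph{geometry} of the image: since $\Phi_L(c)=s$ and $\Phi_L(a)=a+w(a)s^{-1}$ for every other chord, every monomial of $\e_i(\Phi_L(\rho_+))$ has $s$-exponent $\leq 0$, and the part with $s$-exponent equal to $0$ reproduces $\e_i(\rho_-)$. Hence $\{s=0\}$ is a supporting hyperplane of $N(\e_{L\cup L_-^i},\rho_+)$ and $N(\e_i,\rho_-)$ sits inside it as a face; lattice points of a face are lattice points of the ambient polytope, so the total lattice-point count of $N(\e_{L\cup L_-^i},\rho_+)$ is at least the strictly increasing counts of the witnessing polytopes downstairs. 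Passing to a subsequence then gives pairwise distinct counts, hence pairwise unimodularly inequivalent polytopes in $\R^{b+1}$, and aug-infiniteness of $\La_+$ follows as you indicate. So to repair your proposal, replace the projection/fiber-lifting argument by this face argument (after reducing, as the paper does, to a single pinch move); the rest of what you wrote is essentially the paper's proof.
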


\begin{proof}
It suffices to consider a single pinch move; then we have an isomorphism $\Z_2[H_1(L_+;\Z)]\simeq\Z_2[H_1(L_-;\Z), s^{\pm1}]$, where $s$ is the variable keeping track of the unstable arc of the unique saddle point of $L$. By~\cite{ehk}, the dga map induced by a pinch move at a contractible (or proper) Reeb chord $c$ of $\Lambda_+$ satisfies $\Phi_L(c)=s$. In addition, for any Reeb chord $a\neq c$ of $\Lambda_+$, we have $$\Phi_L(a)=a+w(a)s^{-1}$$ where $w(a)$ is either zero or some word of Reeb chords of $\Lambda_+$ not containing $a$ or $c$. 

Let $\Phi^{\Z_2}_L:(\mathcal{A}(\Lambda_+; \Z_2[H_1(L_-;\Z)]), \partial_{\Lambda_+})\rightarrow( \mathcal{A}(\Lambda_-; \Z_2[H_1(L_-;\Z)]), \partial_{\Lambda_-})$ denote the dga map $\Phi_L$ where $s=1$. Observe that $\Phi^{\Z_2}_L$ respects the height filtration, and so the Reeb chords in $w(a)$ all have height less than $a$. This implies that $\Phi^{\Z_2}_L$ is surjective. In particular, there exists $\rho_+\in \mathcal{A}(\Lambda_+)$ such that $\Phi^{\Z_2}_L(\rho_+)=\rho_-$. Let $L_-^i$, $i\in\N$, be an infinite family of distinct Lagrangian fillings of $\La_-$ such that the collection of Newton polytopes $N(\e_{L_-^i},\rho_-)$ is infinite, up to unimodular equivalence, and they can be distinguished by lattice point counts. This family of fillings exists by hypothesis and we can (and do) assume, after possibly considering a subcollection, that the count of lattice points strictly increases with $i\in\N$. Let us denote $b=b_1(L_-^i)$ for any $i\in\N$.

We claim that the collection of Newton polytopes $N(\e_{L\cup L_-^i},\rho_+)=N(\e_{L_-^i}\circ \Phi^{\Z_2}_L,\rho_+)$ is also infinite, up to unimodular equivalence. This will conclude the proof. Indeed, the explicit description of $\Phi_L$ above implies that each such polytope $N(\e_{L_-^i}\circ \Phi^{\Z_2}_L,\rho_+)$ is the convex hull of lattice points in $\R^{b+1}$ of two types:

\begin{itemize}
    \item[(i)] Lattice points in $\R^{b+1}$ that lie in the hyperplane $\{s=0\}$. Their convex hull in the hyperplane is precisely the polytope $N(\e_{L_-^i},\rho_-)$ in $\R^{b}=\{s=0\}\sse\R^{b+1}$.\\

    \item[(ii)] Lattice points $\R^{b+1}$ that lie in the hyperplane $\{s=-1\}$.
\end{itemize}
By hypothesis, the number of lattice points of $N(\e_{L_-^i},\rho_-)$ is strictly increasing with $i\in\N$. Since $N(\e_{L_-^i},\rho_-)\sse N(\e_{L\cup L_-^i},\rho_+)$ is a facet, the lattice points in $N(\e_{L_-^i},\rho_-)$ contribute to the lattice point count for $N(\e_{L\cup L_-^i},\rho_+)$. Therefore, there exists an infinite subcollection of the set of Newton polytopes $\{N(\e_{L\cup L_-^i},\rho_+)\}_{i\in\N}$ whose lattice point count is strictly increasing. This proves the claim and therefore the required statement.
\end{proof}

%\begin{remark}
%The proof of Corollary~\ref{cor:cobordism_decomp} aligns with that of ~\cite[Prop. 7.5]{CasalsNg} but here we are able to drop the assumption that the Lagrangian fillings and cobordisms are orientable, and we replace the assumption that $\Lambda_-$ is $\Z$ aug-infinite with the assumption that $\Lambda_-$ is $\Z_2[H_1(L_-;\Z)]$ aug-infinite.\hfill$\Box$
%\end{remark}

\begin{proof}[Proof of Theorem~\ref{thm:inf}]
Theorem~\ref{thm:inf} now follows from Theorem~\ref{thm:main} and Proposition~\ref{cor:cobordism_decomp}.
\end{proof}

\begin{example}\label{ex:knot}
Consider the Legendrian knot $\tilde{\Lambda}\subset (\R^3, \xi_{std})$ whose Lagrangian projection is shown in Figure~\ref{fig:knot}. The Reeb chord in the shaded region is proper and contractible by ~\cite[Prop. 4.5]{CasalsNg}. Performing a pinch move at that Reeb chord allows us to construct an exact Lagrangian cobordism from $\Lambda(\beta_{11})$ to $\tilde{\Lambda}$. By Corollary~\ref{cor:cobordism_decomp}, $\tilde{\Lambda}$ is $\Z_2[\cH]$ aug-infinite for some lattice $\cH$.\hfill$\Box$
\end{example}

\begin{figure}
	\centering
	\begin{tikzpicture}[scale=1]
		\node[inner sep=0] at (0,0)		{\includegraphics[width=9cm]{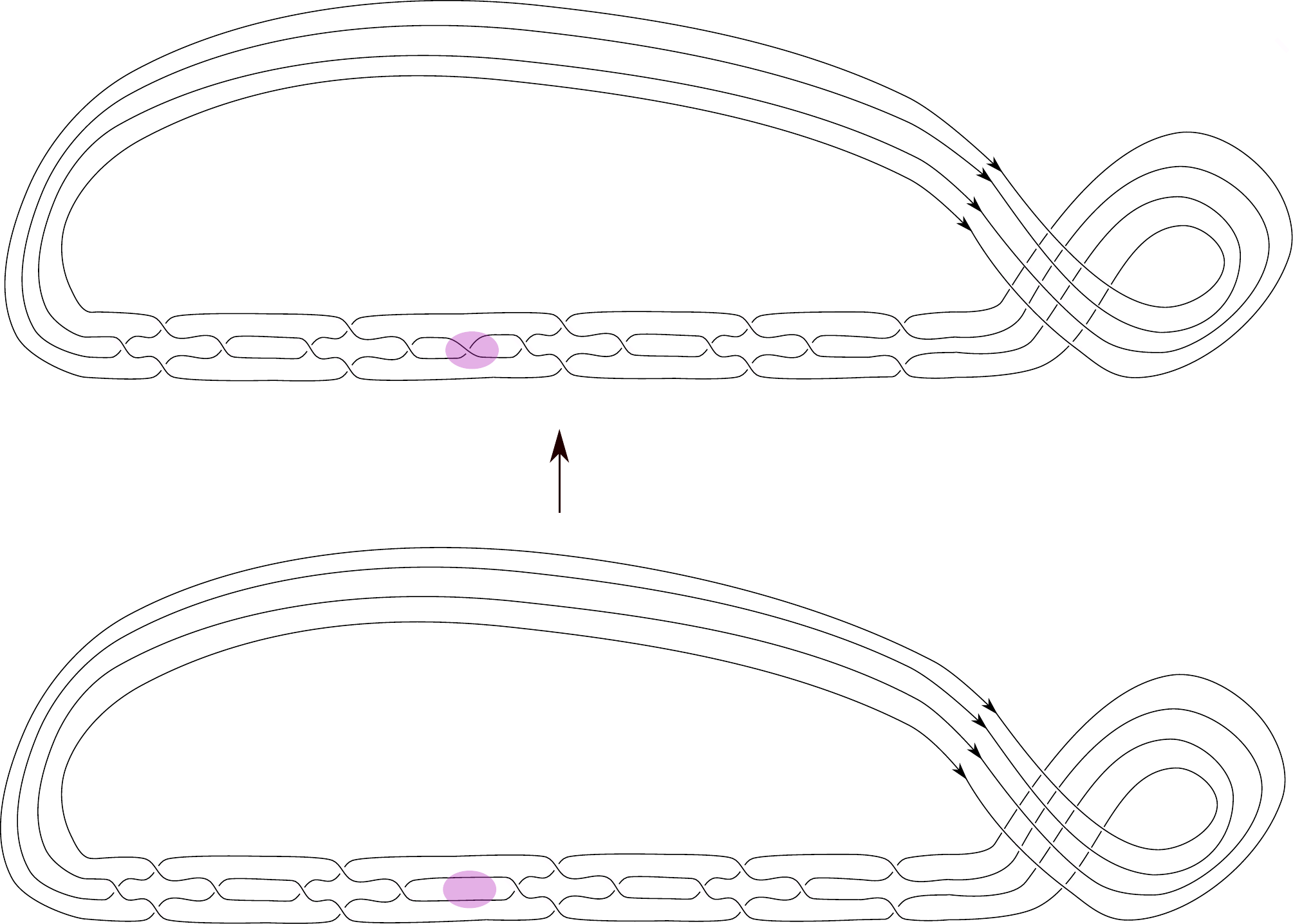}};
		\node at (-5,1.5){$\tilde{\Lambda}$};	
		\node at (-5.3,-2){$\Lambda(\beta_{11})$};
	\end{tikzpicture}
\caption{The Lagrangian projection of a Legendrian knot $\tilde{\Lambda}$ such that there exists a Lagrangian cobordism from $\Lambda(\beta_{11})$ to $\tilde{\Lambda}$ given by pinching the Reeb chord in the shaded region.}
	\label{fig:knot}
\end{figure}

\subsection{Higher-dimensional Legendrians spheres with infinitely many fillings}\label{sec:highdim}

The Legendrian links with infinitely many Lagrangian fillings discovered in \cite{Casals_gao,CasalsNg} can be front spun to produce Legendrian $S^1\times S^{n-1}\sse(\R^{2n+1}, \xi_{std})$ with infinitely many Lagrangian fillings. See \cite{ekholm_etnyre_sullivan_2005,golovko_2014} for front spinning. This is readily implied by the sheaf computations in \cite{Casals_gao,Casals_weng}, and a Floer-theoretical argument is provided in ~\cite{Golovko}. Nevertheless, the front spinning construction does not produce Legendrian spheres $S^n\sse(\R^{2n+1}, \xi_{std})$. Let us construct Legendrian spheres with infinitely many Lagrangian fillings.

%First, spheres have the simplest possible topology of all closed Legendrians. It is thus preferable to understand which phenomena in contact topology, such as admiting infinitely many fillings, are tied to the topological complexity of Legendrian submanifolds. Second, given a Legendrian sphere $\La\sse(\R^{2n+1}, \xi_{std})$, it is possible to construct a $(2n+2)$-dimensional Weinstein manifold by handle attachment or a $(2n+1)$-dimensional contact manifold by contact surgery, see \cite{weinstein1991}. These types of symplectic and contact manifolds obtained from these operations have been crucial for many developments in the area. These operations require $\La$ to be a Legendrian sphere.\footnote{It would be beneficial to have a Morse-Bott version of \cite{weinstein1991} in the language of fronts. This has yet to be developed effectively, e.g.~so that manipulation of fronts capture Weinstein homotopies of the traces.} In this section we explain how to construct Legendrian spheres $\La\sse(\R^{2n+1}, \xi_{std})$, for all $n$, which admit infinitely many embedded exact Lagrangian fillings, distinct up to compactly supported Hamiltonian isotopy.

\begin{proof}[Proof of Theorem~\ref{thm:highdim}]
Let $\Lambda\sse(\R^3, \xi_{std})$ be a Legendrian knot that is $\Z_2[\cH]$ aug-infinite and has no $(-1)$-degree Reeb chords. For clarity, let us also assume that the lattice $\cH$ is of the form $\cH\cong H_1(L,\Z)$ for an orientable embedded exact Lagrangian filling $L$ of $\La$. For example, to prove this theorem we choose this Legendrian knot to be $\Lambda=\tilde{\Lambda}$ from Example~\ref{ex:knot}, see Figure~\ref{fig:knot}. It is $\Z_2[\cH]$ aug-infinite and a Lagrangian filling $L$ can be taken to be a filling constructed via a pinching sequence, see Theorem \ref{thm:inf}. We fix a front projection for $\La$ to be the $(-1)$-closure of the braid word in Example ~\ref{ex:knot}.

Let $\Sigma_{S^{n-1}}\Lambda\sse(\R^{2n+1},\xi_{std})$ be its Legendrian $S^{n-1}$-spun, where we are spinning the front projection of $\Lambda$ about by an axis placed to the right of the front projection of $\Lambda$. By construction, $\Sigma_{S^{n-1}}\Lambda\sse(\R^{2n+1},\xi_{std})$ admits a Lagrangian filling $\Sigma_{S^{n-1}}L$ in the symplectization of $(\R^{2n+1},\xi_{std})$ obtained by spinning the Lagrangian filling $L$ of $\La$. Note that there is a natural inclusion map $\iota:H_1(L,\Z)\lr H_1(\Sigma_{S^{n-1}} L,\Z)$. The same argument for \cite[Theorem 1.1]{Golovko} now with $\Z_2[H_1(\Sigma_{S^{n-1}} L;\Z)]$ coefficients, as opposed to $\Z$ coefficients,
allows us to conclude that $\Sigma_{S^m}\Lambda$ is $\Z_2[H_1(\Sigma_{S^{n-1}} L);\Z]$ aug-infinite because $\tilde{\Lambda}$ is $\Z_2[\cH]$ aug-infinite. 

Let $p\in\La$ be the right cusp of the front projection of $\Lambda$ with the highest $z$ coordinate. Consider the $(n-1)$-dimensional isotropic sphere $S:=S^{n-1}\times\{p\}\sse\Sigma_{S^{n-1}}\Lambda$. Note that $S$ is the boundary of the $n$-dimensional Legendrian disk $D_S\sse(\R^{2n+1},\xi_{std})$ whose front is exactly flat and with boundary (the front of) $S$. The interior of $D_S$ is disjoint from $\Sigma_{S^{n-1}}\Lambda$ by construction. Set $m=n-1$ for ease of notation.

Performing Legendrian surgery on $\Sigma_{S^{m}}\Lambda$ along $D_S$ leads to a surgered Legendrian $\Sigma_{S^m}\Lambda_S$, where the Legendrian submanifold $\Sigma_{S^m}\Lambda_S$ is now diffeomorphic to $S^{n}$. See \cite{ArnoldSing}, or e.g.~\cite{dgr}, for Legendrian surgeries. By construction, there exists an exact Lagrangian cobordism $W_S$ from $\Sigma_{S^m}\Lambda$ to $\Sigma_{S^m}\Lambda_S$ in the symplectization of $(\R^{2n+1},\xi_{std})$. Finally, by \cite[Theorem 1.1]{dgr} and its proof, the induced dga map 
 $$\Phi_{W_S}:(\mathcal{A}(\Sigma_{S^m}\Lambda_S; \Z_2), \partial_{\Sigma_{S^m}\Lambda_S})\rightarrow (\mathcal{A}(\Sigma_{S^m}\Lambda;\Z_2); \partial_{\Sigma_{S^m}\Lambda})$$ 
 is surjective. Now, there is a Legendrian representative for $\Sigma_{S^m}\Lambda_S$ such that its set of Reeb chords is given by the set of Reeb chords of (a representative of) $\Sigma_{S^m}\Lambda$ and a Reeb chord $c_S$ with grading $|c_S|=n-k-1=0$. Since $\Lambda$ has no $(-1)$-degree Reeb chords, then $\Sigma_{S^m}\Lambda$ has no $(-1)$ degree Reeb chords, and then neither does $\Sigma_{S^m}\Lambda_S$. An adaptation of the proof of Proposition~\ref{cor:cobordism_decomp} to higher dimensions will now conclude that $\Sigma_{S^m}\Lambda_S$ is $\Z_2[H_1(\Sigma_{S^m}L\circ W_S);\Z]$ aug-infinite, as follows.

The choice of $\La=\tilde\La$ above gives a Reeb chord $\rho$ of $\La$ such that:

\begin{enumerate}
    \item The Reeb chord $\rho$ is Newton infinite with respect to a collection of augmentations $\{\varepsilon_L^i\}_{i\in\N}$,  associated to fillings $L^i$ of $\La$, and the Newton polytopes in the collection $N(\varepsilon_L^i,\rho)$, $i\in\N$, can be pairwise distinguished by their lattice point counts.\\

    \item The above item also holds after spinning, i.e.~ there exists a degree zero Reeb chord $\tilde\rho$ of $\Sigma_{S^m}\La$, which corresponds to $\rho$ after an appropriate Morsification of the front of $\Sigma_{S^m}\La$, such that $\tilde\rho$ is Newton infinite with respect to a collection of augmentations $\{\varepsilon_{\Sigma_{S^m}L}^i\}_{i\in\N}$ and the Newton polytopes in the collection $N(\varepsilon_{\Sigma_{S^m}}^i,\rho)$ are pairwise distinguished by their lattice point counts.\\
\end{enumerate}
Item (2) above holds because of the inclusion $\iota:H_1(L^i,\Z)\lr H_1(\Sigma_{S^{m}} L^i,\Z)$ and the specifics of front spinning, see \cite[Lemma 3.3]{Golovko}. The particular surgery cobordism $W_S$ above occurs near a cusp of $\Sigma_{S^m}\La$ and therefore $\tilde\rho$ naturally gives a degree zero Reeb chord $\tilde\rho_S$ in $\Sigma_{S^m}\La_S$. The augmented valued of $\tilde\rho_S$ with respect to $\Sigma_{S^m}L^i\circ W_S$ is the same augmented value as that of $\tilde\rho$ with respect to $\Sigma_{S^m}L^i$, and thus essentially the same as the augmented value of $\rho$ with respect to $\varepsilon_L^i$. For $n\geq3$, the inclusions $\iota:H_1(L^i,\Z)\lr H_1(\Sigma_{S^{m}} L^i,\Z)$ and $\iota_S:H_1(\Sigma_{S^{m}} L^i,\Z)\lr H_1(\Sigma_{S^{m}} L^i\circ W_S,\Z)$ induced by front spinning and the surgery cobordism are isomorphisms. It follows that the Newton polytopes in the collection $N(\Sigma_{S^m}L^i\circ W_S,\tilde\rho_S)$, $i\in\N$, can be pairwise distinguished via lattice point counts, as required. In the case of $n=1$, both maps $\iota$ and $\iota_S\circ\iota$ are injective, which still suffices in order to reach the same conclusion. Indeed, $\iota$ is injective by the front spinning construction (e.g.~by the K\"unneth formula) and $\iota_S$ has a rank 1 kernel given by the complement $im(\iota)^\perp$ of the image $im(\iota)$ of $\iota$ inside $H_1(\Sigma_{S^{m}} L^i,\Z)$; the latter holds because the surgery is performed along the disk $D_S$ which precisely bounds a generator for this class generating the kernel. In conclusion, for all $n\geq2$, the Legendrian $n$-sphere $\Sigma_{S^m}\Lambda_S$ is $\Z_2[H_1(\Sigma_{S^m}L\circ W_S);\Z]$ aug-infinite, which proves the statement.
\end{proof}

 \begin{figure}
	\centering
	\begin{tikzpicture}[scale=1.5]
		\node[inner sep=0] at (0,0) {\includegraphics[width=7 cm]{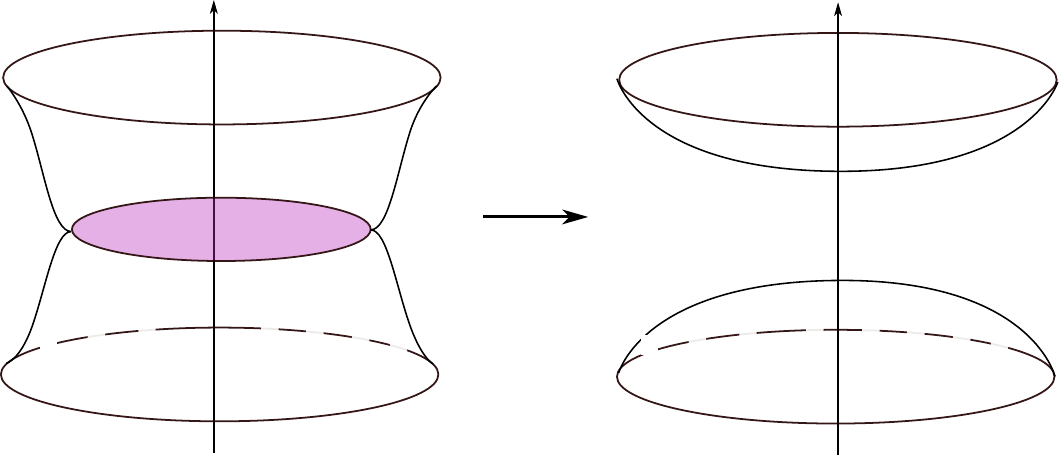}};
	\end{tikzpicture}
	\caption{An example of performing Legendrian surgery on a disk (in purple) with boundary on a spun Legendrian surface, where the vertical arrow indicates the axis of rotation.}
	\label{fig:high_pinch}
\end{figure}

\section{Two Final Remarks}\label{sec:rem}

In this last section we present two brief results which hopefully can help start expanding the study of Newton polytopes in Floer theory. First, we show in Subsection \ref{ssec:Simplex} that for any degree 0 Reeb chord of the max-tb torus knots $\La(2,n)$, expressed as the rainbow closure of $\sigma_1^n$, there exists a Lagrangian filling whose augmentation sends that Reeb chord to a Laurent polynomial whose Newton polytope is a top-dimensional simplex. Second, in Subsection \ref{sec:nonorientable} we distinguish non-orientable Lagrangian fillings and prove Theorem \ref{thm:nonorientable}.

\subsection{Top-dimensional simplices as Newton polytopes for augmentations}\label{ssec:Simplex}

%We have utilized Newton polytopes to distinguish exact Lagrangian fillings. We now turn to asking what kind of lattice polytopes can arise as the Newton polytopes of  augmentations of the Legendrian dga that are induced by exact Lagrangian fillings. We show that simplexes can be obtained as the Newton polytope of an augmentation $\varepsilon_L$ valued on a Reeb chord in Proposition~\ref{prop:torusknots}. 

It would be interesting to explore fully what type of lattice polytopes can appear as $N(\e_L,\rho)$, where $L$ is a Lagrangian filling of a Legendrian knot $\La$ and $\rho$ a Reeb chord of $\La$. To start, we show that top-dimensional simplices always appear for one of the simplest types of Legendrian knots, in the following sense:

\begin{prop}\label{prop:torusknots}
Let $\Lambda(2,n)$ be the max-tb representative of the $(2,n)$-torus knot, $n$ odd, presented as the rainbow closure of the positive braid $\sigma_1^n$. For every zero graded Reeb chord $b$ of $\Lambda(2,n)$, there exists a Lagrangian filling $L$ with induced augmentation $\e_L$ such that the Newton polytope $N(\e_L,b)\sse\R^{n-1}$ is $($equivalent to$)$ the standard top-dimensional simplex with volume $\frac{1}{(n-1)!}$.
\end{prop}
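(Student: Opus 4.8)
The plan is to work concretely with the Legendrian $\Lambda(2,n)$ presented as the rainbow closure of $\sigma_1^n$, whose Lagrangian projection has exactly $n$ crossings $b_1,\ldots,b_n$ (the braid generators) together with the $2$ crossings coming from the two right cusps of the rainbow; of these, $n-2$ are degree $0$ Reeb chords coming from the interior $\sigma_1$'s and will be the candidates $b$, while the outermost ones have degree $1$. First I would recall from the work of Pan (cited in the excerpt) that $\Lambda(2,n)$ has a Catalan number of decomposable Lagrangian fillings, each obtained by a pinching sequence: one pinches $(n-1)/2$ of the $\sigma_1$-crossings in a prescribed order to arrive at a disjoint union of max-tb unknots, then caps off with minimum cobordisms. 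Each such filling $L$ has $b_1(L)=n-1$, and the induced augmentation $\e_L$ takes values in $\Z_2[H_1(L;\Z)]\simeq \Z_2[s_1^{\pm1},\ldots,s_{n-1}^{\pm1}]$ after using the relations imposed by the minimum cobordisms (which set the boundary monomials of each capped unknot equal to $1$).

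The key computation is then: for a fixed degree zero Reeb chord $b$, exhibit \emph{one} filling $L$ among these for which $\e_L(b)$ is, up to a monomial translation and a $\GL(n-1,\Z)$ change of variables, the polynomial $1+s_1+s_2+\cdots+s_{n-1}$, whose Newton polytope is exactly the standard simplex $\mathrm{Conv}\{0,e_1,\ldots,e_{n-1}\}$ of volume $\tfrac{1}{(n-1)!}$. The natural way to see this is the ladder/triangulation picture for $(2,n)$-torus links: the augmentation value of a crossing $b$ is a sum over ``admissible disks'' with corners at $b$, and for a suitably chosen pinching sequence these disks are in bijection with the $n-1$ ``bigon/triangle'' regions in the strip cut out by $\Pi_{xy}(\Lambda(2,n))$, each contributing a distinct monomial $s_j$ (plus the trivial disk contributing $1$). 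Concretely, I would choose the pinching sequence so that $b$ is pinched \emph{last} (or not at all) and track, via the combinatorial dga-map formula of \cite[Section 4]{CasalsNg}, how each pinch move substitutes $s_i$-variables into the word $\partial b$; the upshot is an expression $\e_L(b)=s^{\alpha}(1+s_1+\cdots+s_{n-1})$ once the unknot relations are imposed and the free variables $s_1,\ldots,s_{n-1}$ are renamed appropriately. By Lemma~\ref{lem:equivalence} the renaming and the monomial factor $s^\alpha$ are realized by an element of $(\Z_2^*)^{n-1}\rtimes\GL(n-1,\Z)=\GL(n-1,\Z)$, so $N(\e_L,b)$ is unimodularly equivalent to the standard $(n-1)$-simplex; its Euclidean volume is $\tfrac{1}{(n-1)!}$, a $\GL(n-1,\Z)$-invariant by Lemma~\ref{lem:intpoints}.

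The main obstacle I anticipate is bookkeeping: one must verify that the chosen pinching sequence (i) really terminates in unknots whose minimum-cobordism relations cut $H_1$ down to the stated rank while leaving $n-1$ independent variables, and (ii) genuinely produces the \emph{reduced} polynomial $1+\sum s_j$ with all $n$ monomials surviving mod $2$ and in general position, rather than a degenerate lower-dimensional polytope or one with cancellation. A clean way to sidestep ad hoc disk-counting is to induct on $n$: the Legendrian $\Lambda(2,n)$ is obtained from $\Lambda(2,n-2)$ by a stabilization-type move, and one pinch on $\Lambda(2,n)$ relates the relevant augmentation values, so an induction that propagates ``the Newton polytope of $\e_L(b)$ is a standard simplex of the top dimension'' from $n-2$ to $n$ — matching the dimension jump $b_1$ goes from $n-3$ to $n-1$, consistent with two new basepoint variables — should close the argument. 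I would set up the base case $n=3$ (the max-tb trefoil, where $\e_L$ of the unique degree $0$ chord is $1+s_1+s_2$ for the standard filling, a triangle of area $\tfrac12$) by hand and then run the inductive step using the explicit pinch-move formula, keeping only the monomials that generate the convex hull exactly as in the proof of Lemma~\ref{lem:convexhull}.
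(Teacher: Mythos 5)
Your outline follows the same general skeleton as the paper (pinching-sequence fillings, compute $\e_L(b)$, conclude via unimodular invariance of volume/lattice points), but the central step is asserted rather than proven, and several setup claims are wrong. First, the bookkeeping errors: in the Ng resolution of the rainbow closure of $\sigma_1^n$, \emph{all} $n$ braid crossings are degree-zero Reeb chords (the degree-one chords come from the right cusps), so the proposition concerns $n$ chords, not $n-2$; and a pinching sequence reaching max-tb unknots must pinch all $n$ crossings (Euler characteristic: $\chi(L)=\#\text{cups}-\#\text{pinches}=2-n$ for genus $(n-1)/2$), not $(n-1)/2$ of them — in particular your option of ``not pinching $b$ at all'' is unavailable, and the paper's choice is precisely to pinch everything to the right of $b$ left-to-right, then everything to the left of $b$ right-to-left, then $b$ last.

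The genuine gap is the claim that $\e_L(b)=s^{\alpha}(1+s_1+\cdots+s_{n-1})$ up to renaming: the ``appropriate renaming'' is exactly the $\GL(n-1,\Z)$ equivalence the proposition asks you to establish, so this assumes the conclusion. The actual augmented value is not a sum of distinct single variables; with the paper's pinching order one gets
$$\e_L(b)=s_i+\sum_{j=1}^{i-1}s_{j}^{-1}\prod_{k=j+1}^{i-1} s_{k}^{-2}+\sum_{j=i+1}^{n}s_{j}^{-1}\prod_{k=i+1}^{j-1} s_{k}^{-2},$$
whose exponent vectors involve entries equal to $-2$, so your picture of each disk contributing a distinct monomial $s_j$ is false. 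After imposing $s_1\cdots s_n=1$ and translating so the monomial $s_i$ sits at the origin, one must still verify that the remaining $n-1$ exponent vectors form a matrix in $\GL_{n-1}(\Z)$; the paper does this by writing the matrix in block form with blocks $D_{i-1},D_{n-i}$ (zeros on the diagonal, $1$ above, $-1$ below) and all-ones off-diagonal blocks, and exhibiting an explicit integral inverse. Your proposal contains no substitute for this determinant computation: the inductive scheme from $\Lambda(2,n-2)$ to $\Lambda(2,n)$ is not set up (a single pinch relates $\sigma_1^n$ to $\sigma_1^{n-1}$, not $\sigma_1^{n-2}$, and you give no argument that the two new exponent directions stay unimodular relative to the old ones), and your base case is also off — the trefoil has three degree-zero chords, and even there the augmented values are of the form $s_1^{-1}s_3^{-1}+s_1^{-1}+s_3^{-1}$ rather than $1+s_1+s_2$, so the unimodularity check is already nontrivial. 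Until you supply the explicit computation of $\e_L(b)$ for a concrete pinching order and prove the resulting exponent matrix is invertible over $\Z$, the proof is incomplete.
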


\begin{proof}
Let us draw $\La(2,n)$ in the Lagrangian projection, as the Ng resolution of the rainbow closure of the positive braid $\sigma_1^n$. The degree 0 Reeb chords correspond to the crossings of $\sigma_1^n$. Suppose that $b$ is the $i$-th crossing, starting the labeling from the left. To construct $L$, we consider the following pinching sequence:

\begin{itemize}
    \item[(1)] First pinch the crossings to the right of $b$ in left-to-right order.

    \item[(2)] Then, pinch the crossings to the left of $b$, in right-to-left order
    
    \item[(3)] Finally pinch the crossing $b$ itself.
\end{itemize}
For example, consider $\Lambda(2,7)$ presented as the rainbow closure of $\sigma_1^7$. Suppose that $b=b_5$ is the 5th crossing (degree 0 Reeb chord) of $\sigma_1^7$ counting from the left. Then the pinching sequence is given by $b_6,b_7,b_4,b_3,b_2,b_1,b_5$. Indeed, the crossings to the right of $b=b_5$ are $b_6,b_7$ ordered left-to-right, so we first pinch $b_6$ and then $b_7$. Similarly, the crossings to the left of $b=b_5$ are $b_1,b_2,b_3,b_4$ ordered left-to-right: we first pinch starting from the rightmost crossing $b_4$ and then $b_3$, then $b_2$ and finally $b_1$, going right-to-left. The last step is pinching $b=b_5$ itself.

Note that it does not matter in what order we intersperse the opening of the crossings to the right of $b$ and to the left of $b$; we chose first to pinch those on the right just for specificity. After these $n$ pinch moves, we have two max-tb unknots that we can fill with two minimum cobordisms each to obtain the Lagrangian filling $L$, and thus an augmentation $\e_L$. The $\e_L$-augmented value of $b$ is
%$$\e(b)=s_n+s_{i}^{-1}+s_i^{-2}s_{i-1}^{-1}+\cdots s_i^{-2}s_{i-1}^{-2}\cdots s_2^{-2}s_1^{-1}+s^{-2}_{i+1}s^{-1}_{i+2}\cdots +s^{-2}_{i+1}\cdots s^{-2}_{n-2}s^{-1}_{n-1}$$
$$\e_L(b)=s_i+\sum_{j=1}^{i-1}s_{j}^{-1}\cdot\left(\prod_{k=j+1}^{i-1} s_{k}^{-2}\right)+\sum_{j=i+1}^{n}s_{j}^{-1}\cdot\left(\prod_{k=i+1}^{j-1} s_{k}^{-2}\right).$$
This is readily computed via the same methods as in \cite{ehk,Pan}.
By construction, the relation $\e_L(s_1s_2\ldots s_n)=1$ holds, and therefore we set $s_i=s^{-1}_1\cdot\ldots \cdot s_{i-1}^{-1}\cdot s_{i+1}^{-1}\cdot\ldots\cdot s^{-1}_{n-1}s_n^{-1}$. The Newton polytope $N(\e_L,b)$ is then a polytope in $\Z^{n-1}\sse\R^{n-1}$, the lattice associated to $s_1,\ldots,s_{i-1},s_{i+1},\ldots,s_{n}$. It is the convex hull of the $n$ monomials in $\e_L(b)$. Thanks to the computation above, $N(\e_L,b)$ is in fact the convex hull of the following points in $\bR^{n-1}$:
\begin{itemize}
\item[(a)] (Type-R) The lattice points associated to the monomials coming from pinchings to the right of $b$, which is the third summand in $\e_L(b)$:
$$(0,\ldots,0,-1_{i+1},0,\ldots,0), (0,\ldots,0,-2_{i+1},-1_{i+2},0,\ldots,0), (0,\ldots,0,-2_{i+1},-2_{i+2},-1_{i+3},\ldots,0),$$ $$\ldots, (0,\ldots,0,-2_{i+1},-2_{i+2},-2_{i+3},\ldots,-1_{n-2},0),(0,\ldots,0,-2_{i+1},-2_{i+2},-2_{i+3},\ldots,-1_{n-1}),$$
where the subindices indicate the component of the point.\\

\item[(b)] (Type-L) The lattice points associated to the monomials coming from pinchings to the left of $b$, which is the second summand in $\e_L(b)$:
$$(0,\ldots,0,-1_{i-1},0,\ldots,0), (0,\ldots,0,-1_{i-2},-2_{i-1},0,\ldots,0), (0,\ldots,0,-1_{i-3},-2,-2_{i-1},\ldots,0),$$
$$\ldots, (0,-1_{2},-2_{3},\ldots,-2_{i-1},0,\ldots,0),(-1_{1},-2_{2},\ldots,-2_{i-1},0,\ldots,0).$$

\item[(c)] The lattice point $(-1,-1,\ldots,-1)$ associated to the monomial $s_i=s^{-1}_1\cdot\ldots \cdot s_{i-1}^{-1}\cdot s_{i+1}^{-1}\cdot\ldots\cdot s^{-1}_{n-1}s_n^{-1}$, which is the first summand of $\e_L(b)$.
\end{itemize}

Let us translate $N(\e_L,b)$ by adding the constant vector $(1,\ldots,1)$. The lattice point in type (c) above becomes the origin. The resulting $(n-1)$ lattice points can then be considered as vectors in $\R^{n-1}$, centered at the origin. Let $S\in M_{n-1}(\Z)$ be the matrix constructed by using these vectors as rows, for a choice of row permutation. It suffices to argue that $S\in \GL_{n-1}(\Z)$, i.e.~ $|\det(S)|=1$. Given $m,l\in\N$, consider the matrix $O_{l,m}\in M_{l,m}(\Z)$ all of whose entries are $1$, and the matrix $D_m\in M_{m}(\Z)$ which has entries equal to $0$ in the diagonal, entries equal to $1$ above the diagonal, and entries equal to $-1$ below the diagonal. Then $S$ is given in block notation as

$$S=\begin{pmatrix}
D_{i-1} & \vline & O_{i-1,n-i} \\
\hline
O_{n-i,i-1} & \vline & D_{n-i}
\end{pmatrix}\in M_{n-1}(\Z).$$

Consider matrices  $\mathbb{O}_{i-1,n-i},
\mathbb{O}_{n-i,i-1}$ such that each odd row starts on the left with $1$, each even row starts on the left with $-1$, and for each row (even or odd) the rest of the entries alternate between $1$ and $-1$, so that the entries of a row are always alternating when read left to right. Let $\mathbb{D}_{m}\in M_m(\Z)$ be a matrix with diagonal entries $0$, and such that: to the right of each (zero) diagonal entry, the entries in that piece of the row alternate between $\pm1$ starting with $-1$ to the right of the diagonal entry, and to the south of each diagonal entry, the entries in that piece of the column alternate between $\pm1$ starting with $1$ right below the diagonal entry. Then $S$ admits the explicit inverse $$S^{-1}=\begin{pmatrix}
\mathbb{D}_{i-1} & \vline & \mathbb{O}_{i-1,n-i} \\
\hline
\mathbb{O}_{n-i,i-1} & \vline & \mathbb{D}_{n-i}
\end{pmatrix}\in M_{n-1}(\Z),$$
where the equality $S\cdot S^{-1}=\mbox{Id}$ is verified directly. Therefore $|\det(S)|=1$.
\end{proof}

\subsection{Distinct non-orientable exact Lagrangian fillings}\label{sec:nonorientable}
The existence and classification of non-orientable exact Lagrangian fillings is less understood than that of their orientable counterparts. By distinguishing augmentations with $\Z_2[H_1(L, \Lambda;\Z)]$ coefficients we can distinguish non-orientable exact Lagrangian fillings. First, we must establish how to keep track of generators of relative homology for non-orientable decomposable Lagrangian cobordisms $L$.

\subsubsection{Non-orientable Lagrangian cobordisms} By definition, a non-orientable pinch move in the Lagrangian projection replaces a negative crossing corresponding to a contractible proper Reeb chord with its $0$-resolution as shown in Figure~\ref{fig:nonorientable_pinch}. If two Legendrians $\Lambda_-$ and $\Lambda_+$ are related by an non-orientable pinch move, then there exists an exact Lagrangian non-orientable cobordism from $\Lambda_-$ to $\Lambda_+$ which is topologically a non-orientable $1$-handle, i.e. a twice punctured $\R\P^2$ union a collection of trivial cylinders, all with boundary in $\Lambda_{+}\sqcup \Lambda_{-}$.

Let $\Lambda$ be a Legendrian link in $(\mathbf{R}^3,\xi_{std})$ and $L$ an exact decomposable Lagrangian filling of $\Lambda$. For such cobordisms, we give a relative basis of $H_1(L, \Lambda;\Z)$ in a standardized manner following~\cite{ehk,Pan,CasalsNg}, encoded by basepoints on the Legendrian slices of $L$. The set up in this non-orientable case is similar to \cite{CasalsNg}, as follows.

Assume that there is a collection of oriented arcs and circles on $L$ such that all circles are contained in the interior of $L$, and the oriented arcs $\{\gamma_1, \ldots, \gamma_k\}$ are transverse to  $\Lambda_+$ and $\Lambda_-$ at their endpoints on $\Lambda_+$ and $\Lambda_-$. Then the induced dga chain map $\Phi_{L}$ is defined by counting holomorphic disks with boundary on $L$ and boundary punctures mapping to Reeb chords on $\Lambda_+$ and $\Lambda_-$. One can assume that the curves intersect $\gamma_i$ transversely, and that no arc $\gamma_i$ has an endpoint on a Reeb chord of $\Lambda_+$ or $\Lambda_-$. Let $\{\gamma_{i_1}, \ldots, \gamma_{i_p}\}$ denote the subset of arcs with at least one endpoint on $\Lambda_+$ and let $\{\gamma_{j_1}, \ldots, \gamma_{j_q}\}$ denote the subset of arcs with at least one endpoint on $\Lambda_-$. We view these endpoints $\gamma_i$ as basepoints $s_i$ on $\Lambda_{\pm}$ respectively. The dga of $\Lambda_+$ equipped with the basepoints associated to $\gamma_{i_1}, \ldots, \gamma_{i_p}$ has coefficient ring $\Z_2[s^{\pm1}_{i_1}, \ldots, s^{\pm1}_{i_p}]$, while the dga of $\Lambda_-$ equipped with the basepoints associated to $\gamma_{j_1}, \ldots, \gamma_{j_q}$ has coefficient ring $\Z_2[s^{\pm1}_{j_1}, \ldots,s^{\pm1}_{j_q}]$. We tensor both dgas with $\Z_2[s_1^{\pm1}, \ldots, s^{\pm1}_k]$ in order to define $\Phi_{L}$ as a chain map of $\Z_2[s_1^{\pm1}, \ldots s_k^{\pm1}]$-algebras. If $\Delta$ is a holomorphic disk contributing to $\Phi_{L}$ then we give it the coefficient $s_1^{n_1(\Delta)}\cdots s_k^{n_k(\Delta)}\in R[s_1^{\pm1}, \ldots s_k^{\pm1}]$ where $n_i(\Delta)$ is the signed intersection number of $\partial \Delta$ with $\gamma_i$. Observe that if we have a basis of generators $\{\gamma_i\}$ for $H_1(L, \Lambda_+\cup \Lambda_-;\Z)$, then for each holomorphic (punctured) disk $\Delta$, $n_i(\Delta)$ allows us to keep track of the relative homology class of $\partial \Delta$.

%A basis of generators $\{\gamma_i\}$ for $H_1(L, \Lambda_+\cup \Lambda_-;\Z)$, by Poincar\'e duality, yields a basis of generators $\{\theta_{\gamma_i}\}$ of $H^1(L)$ such that for any loop $\beta$ in $L$, $\gamma_i\cap \beta=\theta_{\gamma_i}(\beta)$. Therefore, the homology class of any loop $\beta$ is given by considering the intersection number with each generating curve in $H_1(L, \Lambda_+ \cup \Lambda_-;\Z)$. Thus, for each holomorphic disk $\Delta$, $n_i(\Delta)$ allows us to keep track of the homology class of $\partial \Delta$.

Suppose that $L$ is constructed using a sequence of Legendrian isotopies, $n$ pinch moves and (at the end) $k$ minimum cobordisms. (See end of Subsection \ref{sec:prelim} above to recall these concepts, right before Remark \ref{rmk:dga_map_non_orientable}.) Let $\Lambda_0$ be an unlinked set of $k$ max-tb Legendrian unknots. Let $\Lambda_i$ denote the Legendrian link obtained from $\La_0$ after the $i$th reverse pinch move on $\Lambda_0$ and let $L_i$ denote the exact Lagrangian cobordism from $\Lambda_{i}$ to $\Lambda_{i+1}$. Place $k$ basepoints $t_1, \ldots, t_k$ on the Legendrian $\Lambda=\Lambda_+$ being filled. Each $\Lambda_i$ will have two additional basepoints than $\Lambda_{i+1}$, which we label $s_i^{\pm1}$, $\Lambda_0$ contains the $k$ basepoints $t_1, \ldots, t_k$, and $2n$ basepoints $s_i^{\pm1}$ for $i=1,\ldots, n$. Let $\tau_i$ denote the arc traced out by the basepoints $t_i$. The two basepoints $s_i$ will be the endpoints of an arc $\sigma_i$ from $\Lambda_0$ to itself. While the cobordism itself may be non-orientable, we assume that each Legendrian $\Lambda_i$ and each arc $\tau_i$ and $\sigma_i$ are oriented. The intersection of the oriented $\Lambda_i$ with the oriented $\sigma_i$ and $\tau_i$ will be encoded by $s_i$ or $s_i^{-1}$, and $t_i$ or $t_i^{-1}$ depending on whether the intersection is positive or negative. See Figure~\ref{fig:nonorientable_pinch} for an example of this. We always choose orientations on $s_i$ and $\Lambda_i$ so that the dga maps induced by pinch moves are well-defined.

Let $\Lambda_{-}$ be obtained from $\Lambda_{+}$ by a non-orientable pinch move. If both $\Lambda_{+}$ and $\Lambda_{-}$ are knots, we will set the convention of choosing the orientation on $\Lambda_{-}$ so that the arc $\tau$ between the basepoints intersects both $\Lambda_{-}$ and $\Lambda_{+}$ positively. If $\Lambda_{+}$ is a link and $\Lambda_{-}$ is a link with one less link component, then we require that the intersection $\tau_i\cap \Lambda_{+}$ has the opposite intersection sign of $\tau_i\cap \Lambda_{-}$, for some $i$.

For any orientable or non-orientable exact filling $L$ of $\Lambda$, $L$ is a surface with boundary and thus deformation retract onto its $1$-skeleton. In particular, $H_1(L, \Lambda;\Z)$ has no torsion, $\Z_2[H_1(L,\Lambda;\Z)$ is a Laurent polynomial ring and $H_1(L,\Lambda;\Z)\simeq \langle \gamma_1, \ldots, \gamma_k\rangle$ is isomorphic to a free lattice of rank $k$ for some $k\in\N$. All of the results in Sections~\ref{sec:prelim} and~\ref{sec:newtonbackground} stated for $H_1(L;\Z)$ also hold for $H_1(L, \partial L; \Z)$.

\begin{remark}
Suppose that we have constructed a non-orientable filling of $\Lambda$ with $n$ pinch moves. The square of the product of a subset of the basepoints corresponding to the $n$ pinch moves will be equal to $1$ or to the basepoint $t$. This is a consequence of the fact that these pinch moves give surfaces that are topologically $\R\mathbb{P}^2$s with punctures.\hfill$\Box$
\end{remark}

\begin{figure}
	\centering
	\begin{tikzpicture}[scale=1.5]
		\node[inner sep=0] at (0,0) {\includegraphics[width=7 cm]{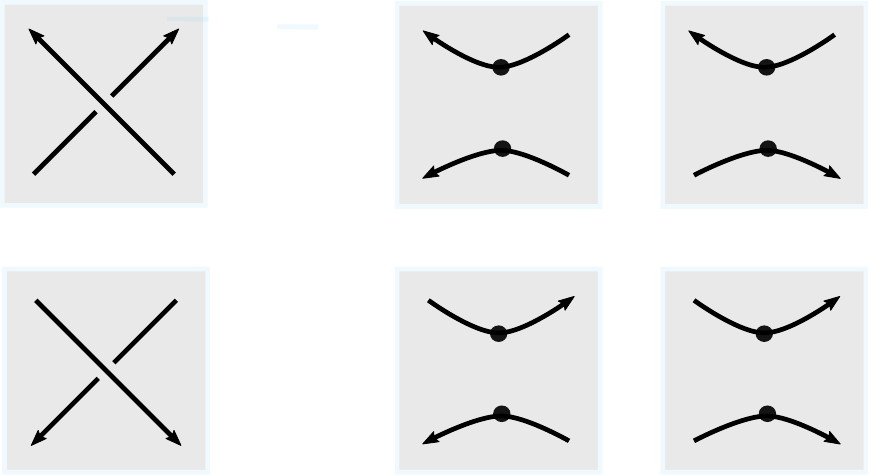}};
		\node at (-1.8,0.5){$a$};
		\node at (-1.8,-1){$a$};
		\node at (.3,0.6){$s$};
		\node at (.5,1.1){$s^{-1}$};
		\node at (1.6,1.1){$s$};
		\node at (1.6,0.6){$s$};
		\node at (.3,-0.75){$s$};
		\node at (.3,-.3){$s$};
		\node at (1.6,-.3){$s$};
		\node at (2,-0.75){$s^{-1}$};
		\node at (-0.7,0){$\boldsymbol{\rightarrow}$};
	\end{tikzpicture}
	\caption{Non-orientable pinch moves at a Reeb chord $a$, where the crossing $a$ is replaced with its $0$-resolution and two basepoints $s, s^{-1}$. After performing either of the non-orientable pinch moves shown on the left, the arcs may be oriented in either of the four ways shown on the right. The basepoints are determined by the orientation of the arcs.} 
	\label{fig:nonorientable_pinch}
\end{figure}

\subsubsection{Examples of non-orientable fillings} Legendrian knots with non-orientable Lagrangian fillings can be readily constructred as follows. Start with a Legendrian knot $\La_-$ which admits an orientable exact embedded Lagrangian filling $L_-$ and attach a non-orientable Lagrangian 1-handle, i.e. perform the inverse of a pinch move in Figure \ref{fig:nonorientable_pinch}. This yields a Legendrian link $\La_+$ and a non-orientable exact Lagrangian cobordism $N$ from $\La_-$ to $\La_+$. Then $\La_+$ admits a non-orientable Lagrangian filling $L_+=L\cup_{\La_-}N$ constructed by concatenating $L$ with $N$. Here are two simple instances:

\begin{itemize}
    \item[(i)] Let $\La_+=\Lambda(2,-2n)$ be a max-tb negative Legendrian torus link, $n\geq 2$ whose projection is given by taking the positive Legendrian torus link $\Lambda(2,2n)$ given as the rainbow closure of the positive braid $\sigma_1^{2n}$ and reversing the orientation on one of the link components. After a non-orientable pinch move, one can choose orientations so that $\La_-$ is the max-tb Legendrian torus knot $\La(2,2n-1)$. More generally, it follows from \cite{Pan} that one can produce at least a Catalan number $C_{2n}$ of distinct non-orientable Lagrangian fillings for $\La_+$ by using pinching sequences to create (non-orientable) fillings $L$ of $\La_+$. Indeed, the systems of augmentations over $\Z[H_1(L, \Lambda;\Z)]$ distinguish these fillings because the computations in \cite{Pan}, counting number of monomials, apply.\\

    \item[(ii)] Let $\La_+=\Lambda(K_n)$ be a hyperbolic link whose Lagrangian projection is shown in Figure~\ref{fig:Kn}, $n\geq2$. If $n$ is even, we can performing a non-orientable pinch move at the Reeb chord $b_2$. This yields $\La_-=\La(2,n-1)$. If $n$ is odd, perform an orientable pinch move at $b_2$, so as to obtain $\La_-=\La(2,-n-1)$. In either case, concatenating the Lagrangian fillings for $\La_-$ with the pinch moves yields non-orientable Lagrangian fillings for $\La(K_n)$.
\end{itemize}

\begin{figure}
	\centering
	\begin{tikzpicture}[scale=1]
		\node[inner sep=0] at (0,0) {\includegraphics[width=5cm]{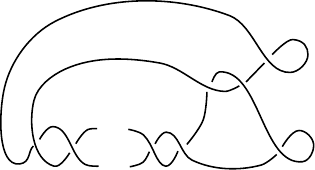}};
		\node at (-1.9,-1.5){$c_1$};
		\node at (-1.3,-1.5){$c_2$};
		\node at (-0.5,-1){$\cdots$};
		\node at (0,-1.5){$c_{n-1}$};
		\node at (0.6,-1.5){$c_n$};
		\node at (0.8,0.4){$b_1$};
		\node at (1.45,0.4){$b_2$};
		\node at (2,-1.5){$a_2$};
		\node at (1.9,0.9){$a_1$};
	\end{tikzpicture}
\caption{The Lagrangian projection of a twist knot $K_n$.}
	\label{fig:Kn}
\end{figure}

In the examples above, only finitely many non-orientable Lagrangian fillings can be distinguished. In line with \cite{Casals_Viterbo60}, one would naturally conjecture that only finitely many such actually exist. In order to prove Theorem \ref{thm:nonorientable} we need to construct a different example. We first present a Legendrian link with infinitely many non-orientable Lagrangian fillings, as follows.

\begin{example}\label{ex:nonorientable}
Let $\tilde{\Lambda}(\beta_{11})$, resp.~$\tilde{\Lambda}_1$, be the oriented Legendrian links given by the $(-1)$-closure of the positive braid $\beta_{11}$, resp.~$\Lambda_1$, with only one of the link components having the opposite orientation. Construct a filling using the same pinching sequences as in Subsection \ref{sec:proofmain}, except using a non-orientable pinch move when required. The switch in orientation of one component will require such non-orientable pinch move, and there the resulting fillings are non-orientable. By construction, the induced augmentations and monodromy maps of the isotopy loops in Theorem~\ref{thm:main} are unchanged over characteristic 2. Since all of the $0$ graded Reeb chords $a_i$ of $\Lambda(\beta_{11})$ and $\Lambda_1$ are cycles, then the corresponding Reeb chords $a_i$ of $\tilde{\Lambda}(\beta_{11})$ and $\tilde{\Lambda}_1$ are also cycles. Thus, the computation in Subsection \ref{sec:proofmain} concludes that there are augmentations $\e_L^i$ of $\tilde{\Lambda}(\beta_{11})$ and $\tilde{\Lambda}_1$ such that $N(\e_L^i, a_9)$ are not pairwise unimodular equivalent. Hence, the non-orientable exact Lagrangian fillings $L^i$ are not Hamiltonian isotopic. The fillings of $\tilde{\Lambda}(\beta_{11})$ construced in this manner are of genus $1$ and crosscap number 1, while the fillings of $\tilde{\Lambda}_1$ are of genus 2 and crosscap number 1.\hfill$\Box$
\end{example}

The only difference between Example \ref{ex:nonorientable} and Theorem \ref{thm:nonorientable} is that the former yields a link, rather than a knot. This is readily corrected, as follows.

\subsubsection{Proof of Theorem~\ref{thm:nonorientable}} Following the proof of Proposition~\ref{cor:cobordism_decomp} it suffices to produce a Legendrian knot $\Lambda$ such that there exists a non-orientable decomposable exact Lagrangian cobordism $L$ from a Legendrian $\Lambda_-$ to $\Lambda$ where $\Lambda_-$ has an element $a\in(\mathcal{A}(\Lambda_-;\Z_2), \partial_{\Lambda_-})$ that is Newton infinite, with an infinity of those polytopes distinguished by lattice point counts, and $\Phi_L(x)=a$ for a cycle $x\in(\mathcal{A}(\Lambda;\Z_2), \partial_{\Lambda})$. An instance of such cobordism can be constructed as follows. Let $\Lambda_-=\Lambda'$ be the Legendrian knot shown in Figure~\ref{fig:knotnonorientable}. If we perform an orientable pinch move at the Reeb chord $b_2$ (marked in a pink dot), we obtain the Legendrian $\tilde{\Lambda}(\beta_{11})$ from Example \ref{ex:nonorientable}. This exact Lagrangian cobordism $L$ between $\Lambda'$ and $\tilde{\Lambda}(\beta_{11})$ induces a dga map $$\Phi_L:(\mathcal{A}(\Lambda';\Z_2[H_1(L, \Lambda'\cup \tilde{\Lambda}(\beta_{11});\Z)]), \partial_{\Lambda'})\rightarrow (\mathcal{A}(\tilde{\Lambda}(\beta_{11});\Z_2[H_1(L,\Lambda'\cup \tilde{\Lambda}(\beta_{11});\Z)]), \partial_{\tilde{\Lambda}(\beta_{11})})$$ such that $\Phi_L(b_2)=s$ and $\Phi_L(a_9)=a_9$. Note that $a_9$ is a cycle in both $\tilde{\Lambda}$ and $\tilde{\Lambda}(\beta_{11})$ and, as argued above, the Reeb chord $a_9$ of $\tilde{\Lambda}(\beta_{11})$ is Newton infinite and distinguished by lattice point counts. Thus $\Lambda'$ is $\Z_2[\cH]$ aug-infinite, and therefore has infinitely many distinct non-orientable exact Lagrangian fillings. (Here the fillings have genus 1 and crosscap number 2.) This concludes the result. \hfill$\Box$

\begin{figure}
	\centering
	\begin{tikzpicture}[scale=1]
		\node[inner sep=0] at (0,0)		{\includegraphics[width=9cm]{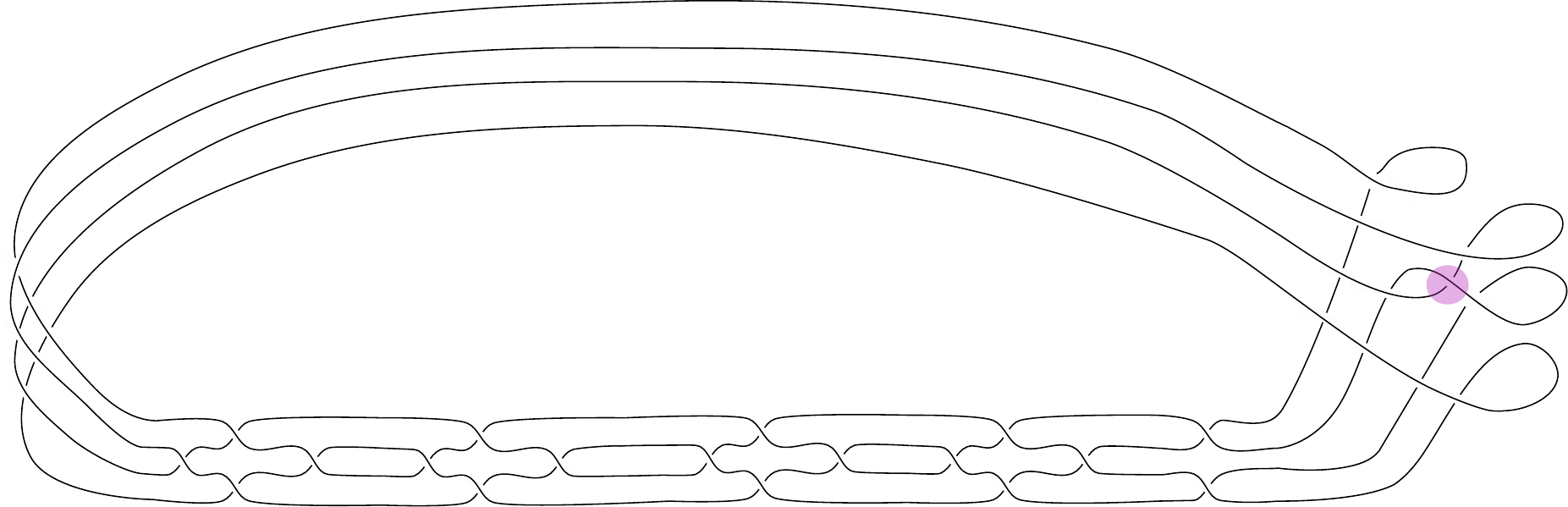}};
	\end{tikzpicture}
\caption{The Lagrangian projection of a Legendrian knot $\Lambda'$ such that there exists a Lagrangian cobordism from $\tilde{\Lambda}(\beta_{11})$ to $\Lambda'$ given by pinching the Reeb chord in the shaded region which we label by $b_2$. This then implies that $\Lambda'$ has infinitely many distinct non-orientable exact Lagrangian fillings.}
	\label{fig:knotnonorientable}
\end{figure}

%%%%%%%%%%%%%%%%%%%%%%%%%%%%%%%%%%%%%%%%%%%%%%%%%%%%%%%%%%%%%%%%%
%%%%%%%%%%%%%%%%%%%%%%%%%%%%%%%%%%%%%%%%%%%%%%%%%%%%%%%%%%%%%%%%%

\bibliographystyle{alpha}
\bibliography{InfiniteFillingsChar2_references}

\end{document}